\theoremstyle{plain}% Theorem-like structures
\newtheorem{theorem}{Theorem}
\newtheorem{lemma}[theorem]{Lemma}
\newtheorem{proposition}[theorem]{Proposition}
\newtheorem{definition}[theorem]{Definition}
\theoremstyle{remark}
\newtheorem{remark}[theorem]{Remark}
\newcommand{\debil}{\rightharpoonup}
\newcommand{\field}[1]{\ensuremath{\mathbb{#1}}}
\newcommand{\R}{\field{R} \xspace}
\newcommand{\eps}{\varepsilon}
\begin{document}

\title{Robust Stackelberg controllability for a parabolic equation\thanks{ \textbf{AMS subject classification:}
 49J20, 93B05, 49K35.}}
\author{V\'{\i}ctor  {\sc Hern\'andez-Santamar\'{\i}a }\thanks{Depto. de Control Autom\'atico, CINVESTAV . E-mail: {\tt
vhernandez@ctrl.cinvestav.mx}. Supported by CONACyT and project   IN102116 of DGAPA, UNAM.} \ and Luz
{\sc de Teresa}\thanks{Instituto de Matem\'aticas, Universidad Nacional Aut\'onoma de
M\'exico, Circuito Exterior, C.U., 04510 D.F., M\'exico. E-mail: {\tt
ldeteresa@im.unam.mx}. Supported by project   IN102116 of DGAPA, UNAM. \
(Mexico).} }

% \date{}

\maketitle

\begin{abstract}
The aim of this  paper is to perform a Stackelberg strategy to control parabolic equations. We have one control, \textit{the leader}, that is responsible for a null controllability property; additionally, we have a control \textit{the follower} that solves a robust control objective. That means, that we seek for a saddle point of a cost functional. In this way, the follower  control is not sensitive to a broad class of external disturbances. As far as we know, the idea of combining robustness with a Stackelberg strategy is new in literature
 \end{abstract}

\section{Introduction}

Let $\Omega\subset\mathbb{R}^N$, $N\geq 1$ be a bounded open set with boundary $\partial \Omega\in C^2$. For $T>0$, we denote $Q=\Omega\times(0,T)$ and $\Sigma=\partial \Omega\times(0,T)$. Let $\omega$ and $\mathcal O$ be nonempty subsets of $\Omega$ with $\omega\cap\mathcal O=\emptyset$. We consider the semilinear heat equation
\begin{equation}\label{semi_heat}
\begin{cases}
y_t-\Delta y+f(y)=h\chi_{\omega}+v\chi_{\mathcal O}+\psi \quad\text{in }Q, \\
y=0\quad\text{on }\Sigma, \quad y(x,0)=y_0(x) \quad\text{in } \Omega.
\end{cases}
\end{equation}
where $f$ is a globally Lipschitz-continous function, $y_0\in L^2(\Omega)$ is a given initial data and $\psi\in L^2(Q)$ is an unknown perturbation. 

In \eqref{semi_heat}, $y=y(x,t)$ is the state and $h=h(x,t)$, $v=v(x,t)$ are two different control functions acting on the system through $\omega$ and $\mathcal O$, respectively.

We want to choose the controls $v$ and $h$ in order to achieve two different optimal objectives:
\begin{enumerate}
\item solve for the ``best'' control $v$ such that $y$ is ``not too far'' from a desired target $y_d$ which is effective even in the presence of the ``worst'' disturbance $\psi$, and
\item find the minimal $L^2$-norm control $h$ such that $y(\cdot, T)=0$.
\end{enumerate}

The first problem, introduced in \cite{temam} for the linearized Navier-Stokes system, looks for a control such that a cost functional achieves its minimum for the worst disturbance. Solving for such control is a way of achieving system robustness: a control which works even in the presence of the worst disturbance $\psi$ will also be robust to a class of other possible perturbations. This approach is useful in physical systems in which unpredictable disturbances are common. 

The second problem is a classical null controllability problem. It has been thoroughly studied in the recent years for a wide variety of systems described by partial differential equations, see for instance \cite{cara_guerrero}.

When dealing with multi-objective optimization problems, a concept of a solution needs to be clarified. There are different equilibrium concepts (see \cite{Nash,Pareto,Stackelber}) which determine a strategy leading to choice good controls. In the framework of control of PDEs, there are several works applying successfully these strategies, see, for instance, \cite{AMR,Glowinski,Guillen,Limaco,LionsHier,LionsSta}.

Here, we use the so-called hierarchic control introduced by Lions in \cite{LionsSta} to achieve the desired goals. This technique uses the notion of Stackelberg optimization. Below, we will explain both the robust control and the null controllability problem and then how we will apply the hierarchic control methodology to solve the multi-objective optimization problem. 

Using the idea of hierarchic control described above, we want to get $y(T)=0$ using the minimal $L^2$-norm control $h$ and to ``stay near'' a desired state $y_d$ with the control $v$ but there is now a perturbation affecting the performance of the system. 

In the case where $\psi=0$, this problem has been solved in \cite{araruna}.The aim of this paper is to   combine the concept of hierarchic control with the concept of robust control appearing in optimal control problems (see, for instance, \cite{temam,temam_nonlinear,aziz}). As far as we know, the idea of combining robustness with a Stackelberg strategy is new in literature. 

\subsection{The control problem}
Let $\mathcal O_d\subset \Omega$ be an open set representing an observation domain. Let us introduce the cost functional
\begin{equation}\label{func_robust}
J_{r}(\psi,v;h)=\frac{1}{2}\iint_{\mathcal O_d\times(0,T)}|y-y_d|^2dxdt+\frac{1}{2}\left[\ell^2\iint_{\mathcal O\times(0,T)}|v^2|dxdt-\gamma^2\iint_Q|\psi|^2dxdt\right].
\end{equation}
where $\ell,\gamma>0$ are constants and $y_d\in L^2(\mathcal O_d\times(0,T))$ is given. This functional describes the robust control problem. We seek to simultaneously maximize $J_r$ with respect to $\psi$ and minimize it with respect to $v$, while maintaining the state $y$ ``close enough'' to a desired target $y_d$ in $\mathcal{O}_d\times(0,T)$. Note that the functional \eqref{func_robust} generalizes some classical optimization problems (see, for instance, \cite{Lions_optim,trol}).

As explained in \cite{temam_nonlinear}, one can intuitively consider the problem as a game between a designer looking for the best control $v$ and a malevolent disturbance $\psi$ spoiling the control objective. The parameter $\ell^2$ may be interpreted as the price of the control to the designer: the $\ell\to \infty$ limit corresponds to a prohibitively expensive control and results in $v\to 0$ in the minimization with respect to $v$. On the other hand, the parameter $\gamma^2$ may be interpreted as the magnitude of the perturbation that the problem can afford. The $\gamma\to \infty$ limit results in $\psi\to 0$ in the maximization with respect to $\psi$. 

The robust control problem is considered to be solved when a saddle point $(\overline{v},\overline\psi)$ is reached. As we will see further, for $\gamma>\gamma_0$ and $\ell>\ell_0$, where $\gamma_0, \ell_0$ are some critical values, we obtain the existence and uniqueness of the saddle point. 

The second problem we aim to solve is to find the minimal norm control satisfying a null controllability constraint. More precisely, we look for a control $h\in L^2(\omega\times(0,T))$ minimizing 
\begin{equation}\label{func_lider}
J(h)=\frac{1}{2}\iint_{\omega_\times(0,T)}|h|^2dxdt\quad\text{subject to}\quad y(\cdot,T)=0.
\end{equation}
It is well-known that for nonlinear terms $f$ satisfying a global Lipschitz condition, system \eqref{semi_heat} is null controllable (see, for instance, \cite{cara_guerrero,fc_zuazua,fursi}). The proof combines an observability inequality for a suitable adjoint linear system and a fixed point technique. We will use a similar argument to deduce the null controllability within the hierarchic control framework. 

Now, we are in position to describe the hierarchic control strategy to solve the optimization problems associated to the cost functionals \eqref{func_robust} and \eqref{func_lider}. According to the formulation originally introduced by H. von Stackelberg \cite{Stackelber}, we denote $h$ as the \textit{leader} control and $v$ as the \textit{follower} control. 

First we assume that the state is well defined in function of the controls, the perturbation and the initial condition, that is, there exists $y=y(h,v,\psi)$ uniquely determined by $h$, $v$, $\psi$, and $y_0$. Then, the hierarchic control method follows two steps:
\begin{enumerate} 
\item The follower $v$ assumes that the leader $h$ has made a choice, that is, given $h\in L^2(\omega\times(0,T))$ we look for an optimal pair $(v,\psi)$ such that is a saddle point to \eqref{func_robust}. Formally defined:
\begin{definition}\label{def:robust}
Let $h\in L^2(\omega\times(0,T))$ be fixed. The control $\bar v\in \mathcal V_{ad}$, the disturbance $\bar\psi\in \Psi_{ad}$ and the associated state $\bar y=\bar y(h,\bar v,\bar \psi)$ solution to \eqref{semi_heat} are said to solve the robust control problem when a saddle point $(\bar \psi,\bar v)$ of the cost functional \eqref{func_robust} is achieved, that is 
\begin{equation}\label{sp_eq}
J_r(\bar v,\psi;h)\leq J_r(\bar v,\bar \psi;h)\leq J_r(v,\bar \psi;h), \quad \forall (v,\psi)\in \mathcal V_{ad}\times \Psi_{ad}.
\end{equation}
Here, $\mathcal V_{ad}$ and $\Psi_{ad}$ are non-empty, closed, convex, and bounded or unbounded sets defining the set of admissible controls and perturbations, respectively. 
\end{definition}
Under certain conditions, we will see that there exists a unique pair $(\bar v,\bar \psi)$ and $\bar y=\bar y(h,\bar v,\bar \psi)$ satisfying \eqref{sp_eq}. 

\item Once the saddle point has been identified for each leader control $h$, we look for an optimal control $\hat h$ such that
\begin{equation}\label{null_opt}
J_L(\hat h)=\min_{h}J_L(h)
\end{equation}
subject to
\begin{equation}\label{null_constraint}
\bar y(\cdot, T;h,\bar v(h), \bar \psi(h))=0.
\end{equation}
\end{enumerate}

\begin{remark}\label{rem:stack}
\begin{itemize}
\item As in \cite{LionsSta}, we use the hierarchic control strategy to reduce the original multi-objective optimization problem to solving the mono-objective problems \eqref{sp_eq} and \eqref{null_opt}--\eqref{null_constraint}. However, in the second minimization problem the optimal strategy of the follower is fixed and its characterization needs to be considered. Indeed, the follower anticipates the leader's strategy and reacts optimally to its action, then if the leader wants to optimize its objective it has to take into account the optimal response of the follower. 

\item Here, we use the fact that   $\omega\cap\mathcal O=\emptyset$. Observe that, in practice, the leader control  cannot decide explicitly what to
do at the points in the domain  of the follower. Indeed, if this assumption is not true,   
 once the  leader has been chosen, the follower is modifying the leader  at those points.
 \end{itemize}
\end{remark}

\subsection{Main results}

The first result concerning the robust hierarchic control is the following one:

\begin{theorem}\label{teo_main}\label{TEO_MAIN}  
Assume that $\omega\cap\mathcal O_d\neq \emptyset$ and $N\leq 6$. Let $f\in C^2(\mathbb R)$ be a globally Lipschitz function verifying $f(0)=0$ and $f^{\prime\prime}\in L^\infty(\mathbb R)$. Then, there exist $\gamma_0$, $\ell_0$ and a positive function $\rho=\rho(t)$ blowing up at $t=T$ such that for any $\gamma>\gamma_0$, any $\ell>\ell_0$, any $y_0\in L^2(\Omega)$, and any $y_d$ verifying 
\begin{equation}\label{yd_robust}
\iint_{\mathcal O_d\times(0,T)}\rho^2 |y_d|^2<+\infty, 
\end{equation}
there exist a leader control $h$ and a unique associated saddle point $(\bar v,\bar \psi)$ such that the corresponding solution to \eqref{semi_heat} satisfies \eqref{null_constraint}.
\end{theorem}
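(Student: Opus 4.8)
The plan is to follow the two-step Stackelberg scheme in the order dictated by the problem: first analyze the follower's robust control problem for a \emph{fixed} leader $h$, derive an optimality system characterizing the saddle point $(\bar v,\bar\psi)$, and then treat the leader's null-controllability problem for the coupled (leader + optimality system) cascade, handling the nonlinearity by linearization plus a fixed-point argument.

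\textbf{Step 1: Linearization and the follower problem.} Write $f(y)=g(y)y$ with $g(y)=\int_0^1 f'(sy)\,ds$, which is bounded since $f$ is globally Lipschitz; the hypotheses $f\in C^2$, $f''\in L^\infty$ guarantee enough regularity of $g$ to make the later Carleman estimates with $L^\infty$ potentials legitimate. Freeze a bounded potential $a=g(z)\in L^\infty(Q)$ coming from some reference trajectory $z$, so that \eqref{semi_heat} becomes the linear equation $y_t-\Delta y+a y = h\chi_\omega+v\chi_{\mathcal O}+\psi$. For this linear state, $J_r(\cdot,\cdot;h)$ is strictly convex in $v$ and, provided $\gamma>\gamma_0$ is large enough that the $\psi$-Hessian stays negative definite (this is where the critical $\gamma_0$ enters), strictly concave in $\psi$; classical saddle-point theory (Ky Fan / the results behind \cite{temam,temam_nonlinear,aziz}) then yields a unique saddle point $(\bar v,\bar\psi)$. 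I would characterize it by introducing an adjoint state $p$ solving a backward heat equation with source $(\bar y-y_d)\chi_{\mathcal O_d}$, giving $\bar v=-\ell^{-2}p\chi_{\mathcal O}$ and $\bar\psi=\gamma^{-2}p$. The outcome is a forward-backward \emph{optimality system} in $(\bar y,p)$, linear, with the leader $h\chi_\omega$ as the only free source.

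\textbf{Step 2: Null controllability of the optimality system.} Now the leader must steer this coupled system to $\bar y(\cdot,T)=0$ with minimal-norm $h$. The standard route is duality: set up the adjoint of the $(\bar y,p)$-cascade — a forward-backward system in dual variables $(\varphi,q)$ — and prove an observability inequality of the form $\|\varphi(0)\|^2 \lesssim \iint_{\omega\times(0,T)}|\varphi|^2$. Because the observation of the adjoint is localized where $h$ acts and the coupling term from $y_d$ sits on $\mathcal O_d$, the condition $\omega\cap\mathcal O_d\neq\emptyset$ is exactly what lets a single Carleman weight, adapted to a subdomain of $\omega\cap\mathcal O_d$, absorb the cross terms; the dimensional restriction $N\le 6$ is what makes the Sobolev embeddings used to handle the zeroth-order/coupling terms (and later the nonlinear term) closed. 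The weight $\rho(t)$ blowing up at $t=T$ is the inverse of the Carleman weight near $T$; assumption \eqref{yd_robust} ensures the target $y_d$ is compatible with the weighted space in which the fixed point will be run. From the observability inequality, minimizing a suitable (weighted) primal functional over the adjoint produces, for the \emph{linearized} problem with frozen potential $a$, a control $h=h_a$ with $\bar y(\cdot,T)=0$ together with quantitative weighted bounds on $(h_a,\bar y,p)$.

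\textbf{Step 3: Fixed point for the semilinear equation.} Define the map $z\mapsto a=g(z)\mapsto (\bar y,\ldots)$ from Step 2; a fixed point of this map solves the original semilinear problem \eqref{semi_heat}. I would set it in a weighted $L^2(Q)$ ball, use the uniform (in $a$, over $\|a\|_\infty\le\|f'\|_\infty$) Carleman constants to get invariance of the ball, and use parabolic smoothing plus $N\le 6$ to get compactness of the map; Schauder's fixed point theorem then closes the argument, and the various constructed quantities inherit the weighted estimates. I expect \textbf{the main obstacle} to be Step 2: producing a single global Carleman estimate for the forward-backward optimality system in which the observation is confined to $\omega$ (not to $\mathcal O_d$), so that all coupling terms — in particular the term linking $p$ back into the $\bar y$-equation through $\mathcal O_d$ — are absorbed using only that $\omega\cap\mathcal O_d\neq\emptyset$; getting the weights, the large Carleman parameters, and the dissymmetry between the forward and backward components to cooperate is the delicate technical heart of the proof. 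A secondary difficulty is bookkeeping the dependence of $\gamma_0,\ell_0$ and of the weight $\rho$ so that Steps 1–3 are mutually consistent.
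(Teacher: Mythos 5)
Your outline reproduces the leader's half of the argument correctly (observability of the adjoint of the optimality system via a Carleman estimate localized in $\omega\cap\mathcal O_d$, then a Schauder fixed point over frozen potentials, then passage to the limit), but there is a genuine gap in the follower's half. You solve the robust saddle-point problem only for a linearized state with frozen potential $a=g(z)$ and expect the final fixed point $z=\bar y$ to ``solve the original semilinear problem''. What such a fixed point delivers is a solution of the first-order optimality system \eqref{foll_robust}; it does not show that $(\bar v,\bar\psi)$ is a saddle point of $J_r$ for the semilinear dynamics \eqref{semi_heat}, because being a saddle point of the frozen linear problem evaluated at the fixed point does not imply the inequalities \eqref{sp_eq} against all competitors $(v,\psi)$ for the nonlinear state map, nor does it give the claimed uniqueness. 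The theorem asserts existence and uniqueness of the saddle point for the semilinear equation itself, and the paper proves this directly (Propositions \ref{propo_saddle} and \ref{Prop:follower_robust}): one establishes strict concavity of $\psi\mapsto J_r(v,\psi)$ and strict convexity of $v\mapsto J_r(v,\psi)$ for $\gamma,\ell$ large, which requires the first and second Fr\'echet derivatives of the control-to-state map (Lemmas \ref{lemma_frechet_1}--\ref{lemma_frechet_2}) and an estimate of $y^{\prime\prime}$, the solution of a heat equation with source $-f^{\prime\prime}(y)|y^\prime|^2$.

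This is also where your accounting of the hypotheses goes astray: you place $N\le 6$ in the Carleman/observability step, but the Carleman estimate of Proposition \ref{car_inicial} needs no dimensional restriction (compare Theorem \ref{teo_robust_acotado}, where the linear case is handled with no condition on $N$). In the paper, $N\le 6$, $f\in C^2$ and $f^{\prime\prime}\in L^\infty$ enter precisely in the Hessian estimate \eqref{conc_E2}, through the interpolation and Sobolev embeddings $L^2(0,T;H^2(\Omega))\cap L^\infty(0,T;H^1_0(\Omega))\hookrightarrow L^{2p^\prime}(0,T;L^{2q^\prime}(\Omega))$ used to bound $\iint_Q|y^{\prime\prime}|^2$ by $C\|\psi^\prime\|_{L^2(Q)}^4$, i.e.\ in proving that the saddle point exists and is unique for the nonlinear state; this is the missing core of your proposal. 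To repair the plan you must either carry out this second-order analysis for the semilinear state before passing to the optimality system (as the paper does), or prove separately that your fixed-point solution of the optimality system realizes a genuine saddle point, which requires the same convexity/concavity analysis. A minor further point: your linearization freezes only $a=g(z)$, while the backward equation of the optimality system carries the second potential $c=f^\prime(z)$, and the observability constant must be uniform with respect to both.
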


As usual in the robust control problems, the assumption on $\gamma$ means that the possible disturbances spoiling the control objectives must have moderate $L^2$-norms. Indeed, if this condition is not met we cannot prove the existence of the saddle point \eqref{saddle_point}. On the other hand, the assumption on the target $y_d$ means that it approaches $0$ as $t\to T$. This is a common feature in some null controllability problems (see, for instance, \cite{deteresa2000,araruna}). 

%\textcolor{red}{Me falta definir bien los espacios de controles y perturbaciones admisibles. En la prueba uso unos diferentes a los que estan aqui abajo... pero la idea del teorema creo qeu es clara}
In the same spirit, we are interested in proving a hierarchic result when the follower control $v$ and the perturbation $\psi$ belong to some bounded sets. To this end, let $E_1$ and $E_2$ be two non-empty, closed intervals such that $0\in E_i$. We define the set of admissible controls by
\begin{equation}\label{V_ad}
\mathcal V_{ad}=\left\{v \in L^2(\mathcal O\times(0,T)): v(x,t)\in E_1 \ \text{for a.e.} \ (x,t)\in \mathcal O\times(0,T) \right\},
\end{equation}
and the set of admissible perturbations by 
\begin{equation}\label{Psi_ad}
 \Psi_{ad}=\left\{\psi \in L^2(Q): \psi(x,t)\in E_2 \ \text{for a.e.} \ (x,t)\in Q \right\}.
\end{equation}
Defined in this way, the sets $\mathcal V_{ad}$ and $\Psi_{ad}$ are non-empty, closed, convex, bounded sets of $L^2(\mathcal O\times(0,T))$ and $L^2(Q)$, respectively.

We will carry out the optimization problem in the set $\mathcal V_{ad}\times\Psi_{ad}$ and restrict ourselves to the linear case. The controllability result is the following:
\begin{theorem}\label{teo_robust_acotado}\label{TEO_ROBUST_ACOTADO}
Let us assume that $f(y)=ay$ for some $a=a(x,t)\in L^\infty (Q)$ and that $\omega\cap\mathcal O_d\neq \emptyset$. Then, there exist $\gamma_0$, $\ell_0$ and a positive function $\rho=\rho(t)$ blowing up at $t=T$ such that for any $\gamma>\gamma_0$, any $\ell>\ell_0$, any $y_0\in L^2(\Omega)$, and any $y_d\in L^2(\mathcal O_d\times(0,T))$ verifying \eqref{yd_robust}, there exist a leader control $h$ and a unique associated saddle point $(\bar v,\bar \psi)\in\mathcal V_{ad}\times\Psi_{ad}$ such that the corresponding solution to \eqref{semi_heat} satisfies \eqref{null_constraint}.
\end{theorem}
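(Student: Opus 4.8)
\noindent The plan is to run the two-step hierarchic scheme, using that for $f(y)=ay$ the state depends affinely on $(v,\psi)$ and that $\mathcal V_{ad}$, $\Psi_{ad}$ are bounded. \emph{Step 1 (the robust problem for a frozen leader).} Fix $h\in L^2(\omega\times(0,T))$ and split the solution of \eqref{semi_heat} as $y=y(h,v,\psi)=y_h+z_v+w_\psi$, the three terms solving the equation driven respectively by $(h,y_0)$, by $v\chi_{\mathcal O}$ and by $\psi$ with null data. Then $J_r(\cdot,\cdot;h)$ is continuous on $\mathcal V_{ad}\times\Psi_{ad}$, strictly convex in $v$ (the term $\tfrac{\ell^2}{2}\|v\|^2$), and, as soon as $\gamma$ exceeds the norm $\gamma_0$ of the bounded map $\psi\mapsto w_\psi$ from $L^2(Q)$ into $L^2(\mathcal O_d\times(0,T))$, strictly concave in $\psi$. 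Since $\mathcal V_{ad}$, $\Psi_{ad}$ are nonempty, closed, convex and bounded, a classical minimax theorem (as in the robust-control setting \cite{temam}) yields a saddle point $(\bar v,\bar\psi)$, unique by the strict convexity--concavity. Writing the first-order conditions and introducing the adjoint state $\bar p$ solving
\begin{equation}\label{plan:adjp}
-\bar p_t-\Delta\bar p+a\bar p=(\bar y-y_d)\chi_{\mathcal O_d}\ \text{in}\ Q,\qquad \bar p=0\ \text{on}\ \Sigma,\qquad \bar p(\cdot,T)=0,
\end{equation}
one gets $\iint_{\mathcal O\times(0,T)}(\ell^2\bar v+\bar p)(v-\bar v)\ge0$ and $\iint_Q(\gamma^2\bar\psi-\bar p)(\psi-\bar\psi)\ge0$ for all $v\in\mathcal V_{ad}$, $\psi\in\Psi_{ad}$; since $E_1,E_2$ are intervals these are equivalent to the pointwise identities $\bar v=\mathbb P_{E_1}(-\ell^{-2}\bar p)$ on $\mathcal O\times(0,T)$ and $\bar\psi=\mathbb P_{E_2}(\gamma^{-2}\bar p)$ on $Q$, with $\mathbb P_{E_i}$ the (contractive) projection onto $E_i$.

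\noindent\emph{Step 2 (the leader's null-controllability problem).} As $0\in E_i$ and $\mathbb P_{E_i}$ is a contraction vanishing at $0$, write $\mathbb P_{E_1}(-\ell^{-2}\bar p)=-\ell^{-2}m_1\bar p$, $\mathbb P_{E_2}(\gamma^{-2}\bar p)=\gamma^{-2}m_2\bar p$ with measurable $m_i\in[0,1]$. Freezing $m=(m_1,m_2)$ in $\mathcal K:=\{g\in L^\infty(Q)^2:0\le g_i\le1\}$, the saddle-point system becomes the linear coupled problem
\begin{equation}\label{plan:coupled}
\begin{cases}
\bar y_t-\Delta\bar y+a\bar y+(\ell^{-2}m_1\chi_{\mathcal O}-\gamma^{-2}m_2)\bar p=h\chi_\omega,\quad \bar y(\cdot,0)=y_0,\\[2pt]
-\bar p_t-\Delta\bar p+a\bar p-\bar y\chi_{\mathcal O_d}=-y_d\chi_{\mathcal O_d},\quad \bar p(\cdot,T)=0,
\end{cases}
\end{equation}
with homogeneous Dirichlet conditions, and the goal is to choose $h$ so that $\bar y(\cdot,T)=0$. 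By duality this reduces to an observability inequality for the adjoint coupled system
\begin{equation}\label{plan:adjc}
\begin{cases}
-\varphi_t-\Delta\varphi+a\varphi=\zeta\chi_{\mathcal O_d},\quad \varphi(\cdot,T)=\varphi_T,\\[2pt]
\zeta_t-\Delta\zeta+a\zeta=(\gamma^{-2}m_2-\ell^{-2}m_1\chi_{\mathcal O})\varphi,\quad \zeta(\cdot,0)=0,
\end{cases}
\end{equation}
namely $\|\varphi(\cdot,0)\|_{L^2(\Omega)}^2+\iint_Q\rho^{-2}\big(|\varphi|^2+|\zeta|^2\big)\le C\iint_{\omega\times(0,T)}|\varphi|^2$, the weight $\rho^{-2}$ being what makes the term $\iint_{\mathcal O_d\times(0,T)}y_d\,\zeta$ appearing in the HUM functional finite precisely under \eqref{yd_robust}.

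\noindent\emph{The Carleman estimate.} The heart of the matter is to prove this observability from a global Carleman estimate for \eqref{plan:adjc}, uniformly in $m\in\mathcal K$. I would apply the standard heat Carleman inequality \cite{fursi} to $\varphi$ (local observation in some $\omega_0\Subset\omega\cap\mathcal O_d$) and to $\zeta$ (local observation in the same $\omega_0$): the source of $\zeta$ is absorbed by the left-hand side for $\varphi$ once $\ell>\ell_0$, $\gamma>\gamma_0$ are large enough that the coupling coefficients are small; the source $\zeta\chi_{\mathcal O_d}$ of $\varphi$ is absorbed (using the same weight for both equations) by the left-hand side for $\zeta$ for a large Carleman parameter; and the leftover local term $\iint_{\omega_0}|\zeta|^2$ is rewritten, via the first equation on $\omega_0\subset\mathcal O_d$, as $\iint_{\omega_0}|\varphi_t+\Delta\varphi-a\varphi|^2$ and then bounded, by a localized energy estimate with a cutoff supported in $\omega$, by $C\iint_{\omega\times(0,T)}|\varphi|^2$ plus absorbable terms. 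This is exactly where the hypothesis $\omega\cap\mathcal O_d\neq\emptyset$ is used. Cleaning the weights as usual gives the observability with an explicit $\rho=\rho(t)$ blowing up at $t=T$, which both forces $\bar y(\cdot,T)=0$ and, through \eqref{yd_robust}, accommodates $y_d$. A standard penalization/HUM argument then produces, for every $m\in\mathcal K$, a solution $(\bar y,\bar p)$ of \eqref{plan:coupled} with $\bar y(\cdot,T)=0$ and a control $h$, with bounds uniform in $m$.

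\noindent\emph{Fixed point and conclusion.} Define $\Theta:\mathcal K\to\mathcal K$ by reading off from this $\bar p$ new multipliers through $\mathbb P_{E_i}$ (it is convenient to track $\Theta$ through the source terms $\mathbb P_{E_i}(c\,\bar p)$, which are Lipschitz in $\bar p$ and vanish where $\bar p=0$, so that the discontinuity of $s\mapsto\mathbb P_{E_i}(s)/s$ at the origin is harmless). Parabolic regularity and the uniform bounds make $\Theta$ compact, and it is continuous; Schauder's theorem gives a fixed point $m$. For this $m$ the associated $(\bar y,\bar p,h)$ together with $\bar v:=\mathbb P_{E_1}(-\ell^{-2}\bar p)\in\mathcal V_{ad}$, $\bar\psi:=\mathbb P_{E_2}(\gamma^{-2}\bar p)\in\Psi_{ad}$ solve the complete saddle-point optimality system, so by Step 1 $(\bar v,\bar\psi)$ is the unique saddle point attached to $h$, and $\bar y$ solves \eqref{semi_heat} with $\bar y(\cdot,T)=0$ — which is the claim. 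The main obstacle is the Carleman estimate for the coupled adjoint \eqref{plan:adjc}: absorbing the cross local term by means of the overlap $\omega\cap\mathcal O_d$ while keeping all constants independent of $m\in\mathcal K$, and propagating the weight $\rho$ so that \eqref{yd_robust} is exactly the compatibility condition for $y_d$. Relative to Theorem \ref{teo_main} the PDE part is lighter (linearity; no use of $f''\in L^\infty$ or of $N\le6$) but the optimization layer is heavier: bounded admissible sets force variational inequalities and the nonsmooth projections $\mathbb P_{E_i}$ instead of the linear relations $\bar v=-\ell^{-2}\bar p$, $\bar\psi=\gamma^{-2}\bar p$, while boundedness of $\mathcal V_{ad}$, $\Psi_{ad}$ is precisely what supplies the compactness and a priori bounds for the fixed point.
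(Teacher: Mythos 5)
Your overall architecture is the same as the paper's: existence and uniqueness of the saddle point on the bounded convex sets $\mathcal V_{ad}\times\Psi_{ad}$ via the Ekeland--Temam minimax theorem for bounded sets (Propositions \ref{prop_bounded} and \ref{propo_saddle_lin}), characterization through the variational inequalities \eqref{rob_opt_control}--\eqref{sys_opt_fin} and the projections onto $\mathcal V_{ad}$, $\Psi_{ad}$, rewriting of these projections as bounded multipliers of $q$ with values in $[0,1]$, null controllability of the frozen-coefficient coupled linear system through an observability inequality uniform in those multipliers, and a Schauder fixed point. The only structural differences are immaterial: the paper freezes $\tilde q$ and takes a fixed point of $\tilde q\mapsto q$ (using continuity of $z\mapsto\Pi(z)/z$ in $L^2$), while you freeze the multiplier $m$; and the paper constructs $\varepsilon$-approximate controls with uniform bounds and passes to the limit, while you invoke HUM directly for each frozen problem.

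There is, however, a genuine flaw in the step you yourself call the heart of the matter. To remove the local term in the second adjoint component (your $\zeta$, the paper's $\theta$) you propose to substitute the first adjoint equation on $\omega_0\subset\omega\cap\mathcal O_d$, obtaining $\iint_{\omega_0\times(0,T)}|\varphi_t+\Delta\varphi-a\varphi|^2$, and then to bound this by $C\iint_{\omega\times(0,T)}|\varphi|^2$ plus absorbable terms ``by a localized energy estimate with a cutoff.'' No such estimate exists: the local $L^2$ norm of $\varphi$ does not control $\varphi_t$ and $\Delta\varphi$ locally, and if you instead invoke local parabolic regularity for $\varphi$ you must re-insert the right-hand side of its equation, which is exactly $\theta\chi_{\mathcal O_d}$ -- the argument becomes circular, with no gain in the weight or in the powers of $s$ that would permit absorption. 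The paper's Proposition \ref{car_inicial} uses the coupling \emph{linearly}, not quadratically: one multiplies the $\varphi$-equation by $e^{-2s\alpha}s^3\lambda^4\xi^3\,\zeta\,\theta$, where $\zeta$ is the cutoff of \eqref{zeta_def}--\eqref{zeta_prop} supported in $\omega_0$, and integrates by parts (see \eqref{est_local_theta}); the terms containing $\nabla\theta$ and the global term $\gamma^{-2}\iint_Q e^{-2s\alpha}s^3\lambda^4\xi^3|\varphi|^2$ are absorbed by the left-hand side of \eqref{car_conlocales_robust} -- the latter being precisely one of the places where $\gamma>\gamma_0$ is needed -- and what remains is only the local weighted term $\iint_{\omega_0\times(0,T)}e^{-2s\alpha}s^7\lambda^8\xi^7|\varphi|^2$, giving \eqref{car_sinlocales_robust} and then, after the usual modification of the weights and energy estimates (again using $\gamma,\ell$ large), the observability inequality of Proposition \ref{propo:obs_ineq}. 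So your plan is salvageable, but the local-term elimination must be replaced by this duality/integration-by-parts argument; as written, that step would fail.
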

The above theorem allows us to consider more practical situations. In real-life applications, we may desire to constrain the controls due to the maximum and minimum limits of the actuators.  On the other hand, we would like to take into consideration the perturbations affecting the system from a family of functions a priori known, without the necessity to look for the optimal performance over a large set of disturbances. 
 
The rest of the chapter is organized as follows. In section \ref{robust_section}, we study the corresponding part to the robust control problem. In fact, we will see that provided a sufficiently large value of $\gamma$, there exists an optimal pair $(\bar v,\bar \psi)$ that can be chosen for any leader control. Then, in section 3, once the follower strategy has been fixed, we proceed to obtain the leader control $h$ verifying the null controllability problem. We devote section \ref{sec:acotados} to prove Theorem \ref{teo_robust_acotado}. 

\section{The robust control problem}\label{robust_section}
\subsection{Existence of the saddle point}
We devote this section to solve the minimization problem concerning the robust control problem. To do this, we will follow the spirit of \cite{temam_nonlinear}. Here, we present results needed to prove the existence and uniqueness of the saddle point, as well as its characterization. In this stage, we assume that the leader has made a choice $h$, so we will keep it fixed all along this section. 

It is well-known (see, for instance, \cite{lady}) that for a globally Lipschitz function $f$ and any $y_0\in L^2(\Omega)$, any $(h,v)\in L^2(\omega\times(0,T))\times L^2(\mathcal O\times(0,T))$ and any $\psi\in L^2(Q)$, system \eqref{semi_heat} admits a unique weak solution $y\in W(0,T)$, where 
\begin{equation}
W(0,T):=\left\{y\in L^2(0,T;H_0^1(\Omega)),\, y_t\in L^2(0,T;H^{-1}(\Omega))\right\}.
\end{equation}
Moreover, $y$ satisfies an estimate of the form 
\begin{equation}\label{est_cont_y}
\|y\|_{W(0,T)}\leq C\left(\|y_0\|_{L^2(\Omega)}+\|h\|_{L^2(\omega\times(0,T))}+\|v\|_{L^2(\mathcal O\times(0,T))}+\|\psi\|_{L^2(Q)}\right),
\end{equation}
where $C>0$ does not depend on $\psi$, $h$, $v$ nor $y_0$.
If, in addition, $y_0\in H_0^1(\Omega)$, then \eqref{semi_heat} admits a unique solution $y\in W^{2,1}_2(Q)$, where
\begin{equation}
W^{2,1}_2(Q):=\left\{y\in L^2(0,T;H^2(\Omega)\cap H_0^1(\Omega)), \, y_t\in L^2(0,T;L^2(\Omega))\right\}.
\end{equation}

\begin{remark}
In order to obtain a solution to \eqref{semi_heat} it is sufficient to consider a locally Lipschitz function $f$. Moreover, if $f$ satisfies a particular growth at infinity, system \eqref{semi_heat} is null controllable in the classical sense, see \cite{fc_zuazua}. At this level, the assumptions on $f$ in Theorem \ref{teo_main} are too strong. However, they will be essential for the differentiability results for the functional \eqref{func_robust}. 
\end{remark}
	
The main goal of this section is to proof the existence of a solution $(\bar v,\bar \psi)$ to the robust control problem of Definition \ref{def:robust}.  The result is based on the following:
\begin{proposition}\label{prop_saddle_ekeland}
 Let $J$ be a functional defined on $X\times Y$, where $X$ and $Y$ are convex, closed, non-empty, unbounded sets. If
\begin{enumerate}
\item $\forall v\in X$, $\psi\mapsto J(v,\psi)$ is concave and upper semicontinuous, 
\item $\forall \psi\in Y$, $v\mapsto J(v,\psi)$ is convex and lower semicontinuous, 
\item $\exists v_0\in X$ such that $\lim_{\|\psi\|_Y\to\infty}  J(v_0,\psi)=-\infty$,
\item $\exists \psi_0\in Y$ such that $\lim_{\|v\|_X\to\infty}  J(v,\psi_0)=+\infty$,
\end{enumerate}
then $\mathcal J$ possesses at least one saddle point $(\bar v, \bar \psi)$ and
\begin{equation*}
\mathcal J(\bar v,\bar \psi)=\min_{v\in X}\sup_{\psi\in Y} J(v,\psi)=\max_{\psi\in Y}\inf_{v\in X} J(v,\psi).
\end{equation*}
\end{proposition}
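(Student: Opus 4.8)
The plan is to recognize Proposition~\ref{prop_saddle_ekeland} as a classical minimax theorem in the spirit of Ky Fan / Sion, and to derive it by combining a coercivity reduction (to pass from unbounded $X,Y$ to bounded convex sets) with the standard Ekeland–Temam saddle point theorem on bounded sets. First I would fix attention on the coercivity hypotheses (3) and (4). Using (3), choose $R_Y>0$ large enough that $J(v_0,\psi)<J(v_0,\psi_0)$ whenever $\|\psi\|_Y>R_Y$; using (4), choose $R_X>0$ large enough that $J(v,\psi_0)>J(v_0,\psi_0)$ whenever $\|v\|_X>R_X$. Set $X_0=\{v\in X:\|v\|_X\le R_X\}$ and $Y_0=\{\psi\in Y:\|\psi\|_Y\le R_Y\}$; these are non-empty (they contain $v_0,\psi_0$ after enlarging the radii if needed), closed, convex, bounded.

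Next I would invoke the classical result (Ekeland–Temam, \emph{Convex Analysis and Variational Problems}, Ch.~VI, Prop.~2.1–2.2): if $J$ is convex–l.s.c. in the first variable, concave–u.s.c. in the second, and $X_0$, $Y_0$ are convex closed bounded (one of them may even be dropped if suitable coercivity holds, but here both are bounded), then $J$ has a saddle point $(\bar v,\bar\psi)\in X_0\times Y_0$, and
\begin{equation*}
J(\bar v,\bar\psi)=\min_{v\in X_0}\max_{\psi\in Y_0}J(v,\psi)=\max_{\psi\in Y_0}\min_{v\in X_0}J(v,\psi).
\end{equation*}
The proof of that result itself rests on a fixed-point / KKM argument, but since it is standard I would quote it. So far this only gives a saddle point relative to the truncated sets.

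The key step — and the main obstacle — is then the \emph{globalization}: showing that $(\bar v,\bar\psi)$ is in fact a saddle point over all of $X\times Y$, not merely over $X_0\times Y_0$, and that the value equals $\min_X\sup_Y J=\max_Y\inf_X J$. For this I would argue as follows. Since $(\bar v,\bar\psi)$ is a saddle point on $X_0\times Y_0$ and $\psi_0\in Y_0$, we get $J(\bar v,\bar\psi)\ge J(\bar v,\psi_0)$, hence $\bar v$ cannot have $\|\bar v\|_X>R_X$ forced large — more to the point, for any $v\in X$ with $\|v\|_X>R_X$, convexity of $\psi\mapsto$ nothing, rather convexity of $v\mapsto J(v,\bar\psi)$ along the segment from $\bar v$ to $v$ together with $J(\bar v,\bar\psi)\le J(v',\bar\psi)$ for the boundary point $v'\in\partial X_0$ (which holds because $\bar v$ minimizes $J(\cdot,\bar\psi)$ over $X_0$) yields $J(\bar v,\bar\psi)\le J(v,\bar\psi)$; the analogous concavity argument in $\psi$ gives $J(\bar v,\psi)\le J(\bar v,\bar\psi)$ for all $\psi\in Y$. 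This is where the choice of $R_X,R_Y$ via (3)–(4) is really used, and where one must be careful that the truncation radii were chosen large enough that $\bar v\in\mathrm{int}\,X_0$ or at least that the segment argument applies; handling the boundary case cleanly is the delicate point. Once the global saddle inequalities
\begin{equation*}
J(\bar v,\psi)\le J(\bar v,\bar\psi)\le J(v,\bar\psi)\qquad\forall (v,\psi)\in X\times Y
\end{equation*}
are established, the identity $J(\bar v,\bar\psi)=\min_{v\in X}\sup_{\psi\in Y}J(v,\psi)=\max_{\psi\in Y}\inf_{v\in X}J(v,\psi)$ follows from the elementary fact that $\sup_\psi\inf_v J\le\inf_v\sup_\psi J$ always, while the existence of a saddle point forces equality and identifies both sides with $J(\bar v,\bar\psi)$.

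I would also remark that the continuity and convexity/concavity hypotheses are exactly what is needed for the weak-compactness argument inside the bounded-set minimax theorem: bounded closed convex sets in the reflexive spaces $L^2(\mathcal O\times(0,T))$ and $L^2(Q)$ are weakly compact, l.s.c. convex functionals are weakly l.s.c., and likewise for the concave part. Thus no additional structure beyond (1)–(4) is required, and in the application $J=J_r(\cdot,\cdot;h)$ these four conditions will be verified directly from the explicit form \eqref{func_robust} once $\gamma>\gamma_0$ and $\ell>\ell_0$ guarantee the coercivity (3)–(4).
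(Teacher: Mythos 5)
The paper does not actually prove this proposition; the statement is quoted from Ekeland--Temam and the text simply refers to \cite[Prop.~2.2, p.~173]{Ekeland}. The proof given there follows the same general strategy you propose (reduce to the bounded-set saddle point theorem), but implements it differently: one takes the exhausting truncations $X_n=X\cap\{\|v\|_X\le n\}$, $Y_n=Y\cap\{\|\psi\|_Y\le n\}$, obtains a saddle point $(v_n,\psi_n)$ on each, uses hypotheses (3)--(4) together with the inequality $J(v_n,\psi_0)\le J(v_n,\psi_n)\le J(v_0,\psi_n)$ to show that the sequence $(v_n,\psi_n)$ is bounded, extracts weak limits, and passes to the limit in $J(v_n,q)\le J(v,\psi_n)$ using the fact that convex l.s.c.\ (resp.\ concave u.s.c.) functions are weakly l.s.c.\ (resp.\ weakly u.s.c.). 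Reflexivity of the ambient spaces is needed both there and in your argument (it is harmless here, since $X$ and $Y$ sit in $L^2$ spaces), and is worth stating.

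Your single-truncation variant has a genuine gap at exactly the point you flag and then leave unresolved: the boundary case. With $R_X$ chosen only so that $J(v,\psi_0)>J(v_0,\psi_0)$ for $\|v\|_X>R_X$, nothing forces the truncated saddle point $\bar v$ into the interior of $X_0$: the estimate you can actually extract from the truncated saddle inequalities is $J(\bar v,\psi_0)\le J(v_0,\bar\psi)\le\sup_{\psi\in Y_0}J(v_0,\psi)$, and this supremum is in general strictly larger than $J(v_0,\psi_0)$, so $\bar v$ may well lie on the sphere $\|\bar v\|_X=R_X$. In that case, for an exterior point such as $v=2\bar v$ the whole open segment $(\bar v,v)$ lies outside $X_0$, the minimality of $\bar v$ over $X_0$ gives no information along the segment, and the convexity argument yields nothing; the same problem occurs on the concave side. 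The repair is either (i) to choose the radii using the global bounds $M:=\sup_{\psi\in Y}J(v_0,\psi)<+\infty$ and $m:=\inf_{v\in X}J(v,\psi_0)>-\infty$ (finite by (1)--(4), weak compactness of bounded sets and weak semicontinuity), namely $R_X$ with $J(v,\psi_0)>M$ for $\|v\|_X\ge R_X$ and $R_Y$ with $J(v_0,\psi)<m$ for $\|\psi\|_Y\ge R_Y$; then the chain $J(\bar v,\psi_0)\le J(\bar v,\bar\psi)\le J(v_0,\bar\psi)$ forces $\|\bar v\|_X<R_X$ and $\|\bar\psi\|_Y<R_Y$, after which your segment argument does go through and the minimax identity follows as you indicate; or (ii) to argue as in the cited proof, with truncation radius $n\to\infty$ and weak limits. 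As written, however, the globalization step is not established, so the proposal is incomplete precisely where the unboundedness of $X$ and $Y$ has to be confronted.
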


The proof can be found on \cite[Prop. 2.2, p. 173]{Ekeland}. We intend to apply Proposition \ref{prop_saddle_ekeland} to the functional \eqref{func_robust} with $X=L^2(\mathcal O\times(0,T))$ and $Y=L^2(Q)$. In order to establish conditions 1--4 for our problem, we need to study first the differentiability of the solution to \eqref{semi_heat} with respect to the data. We have the following results:

\begin{lemma}\label{lemma_frechet_1}
Let $f$ be as in Theorem \ref{teo_main} and $h\in L^2(\omega\times(0,T))$ be given. Then, the operator $G:(\psi,v)\to y$ solution to \eqref{semi_heat} is continuously Fr\'echet differentiable from $L^2(\mathcal{O}\times(0,T))\times L^2(Q)\mapsto W^{2,1}_2(Q)$. The directional derivate in every direction $(v^\prime,\psi^\prime)$ is given by 
\begin{equation}\label{map_frechet_1}
G'(v,\psi)(v^\prime,\psi^\prime)=w
\end{equation}
where $w$ is the solution to the linear system 
\begin{equation}\label{der_frechet_1}
\begin{cases}
w_t-\Delta w+f'(y)w=v^\prime\chi_{\mathcal{O}}+\psi^\prime, \quad \text{in } Q,\\
w=0\quad \text{on }\Sigma, \quad w(x,0)=0 \quad\text{in } \Omega.
\end{cases}
\end{equation}
with $y=G(v,\psi)$ solution to \eqref{semi_heat}.
\end{lemma}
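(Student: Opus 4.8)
The plan is to prove Fréchet differentiability of the solution map $G:(v,\psi)\mapsto y$ via the standard implicit-function-theorem approach for semilinear parabolic equations, exploiting the $C^2$-regularity and the global Lipschitz bound on $f$. First I would fix $h\in L^2(\omega\times(0,T))$ and rewrite \eqref{semi_heat} as the zero set of a map $F:W^{2,1}_2(Q)\times L^2(\mathcal O\times(0,T))\times L^2(Q)\to L^2(Q)\times(H^2(\Omega)\cap H_0^1(\Omega))$ defined by $F(y,v,\psi)=(y_t-\Delta y+f(y)-h\chi_\omega-v\chi_{\mathcal O}-\psi,\,y(0)-y_0)$ (or, more simply, work with homogeneous initial data since $y_0$ is fixed and the map in $(v,\psi)$ alone is at issue). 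The key technical point is that the Nemytskii operator $y\mapsto f(y)$ is $C^1$ from $W^{2,1}_2(Q)$ into $L^2(Q)$: this is where the hypothesis $N\leq 6$ enters, because the Sobolev embedding $W^{2,1}_2(Q)\hookrightarrow L^\infty(Q)$ (or into a sufficiently high $L^p$) for $N\leq 6$—together with $f'\in L^\infty$ and $f''\in L^\infty$—guarantees that $y\mapsto f(y)$ and $y\mapsto f'(y)$ are continuous and that the superposition operator is continuously differentiable with derivative $w\mapsto f'(y)w$. Since this embedding and the boundedness of $f',f''$ control the remainder, one gets the needed Lipschitz/continuity estimates on difference quotients.

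Next I would verify that $F$ is continuously Fréchet differentiable and that the partial derivative $\partial_y F(y,v,\psi)$ is the linear operator $w\mapsto (w_t-\Delta w+f'(y)w,\,w(0))$, which is an isomorphism from $W^{2,1}_2(Q)$ onto $L^2(Q)\times(H^2\cap H_0^1)$: this is exactly the well-posedness statement for the linear parabolic equation \eqref{der_frechet_1} with a bounded zero-order coefficient $f'(y)\in L^\infty(Q)$, together with the maximal parabolic regularity estimate $\|w\|_{W^{2,1}_2(Q)}\leq C\|v'\chi_{\mathcal O}+\psi'\|_{L^2(Q)}$, where $C$ depends only on $\|f'\|_{L^\infty}$, $\Omega$, $T$ (and is uniform as $y$ varies over bounded sets, by \eqref{est_cont_y}). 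The implicit function theorem in Banach spaces then yields that $(v,\psi)\mapsto y=G(v,\psi)$ is $C^1$ from $L^2(\mathcal O\times(0,T))\times L^2(Q)$ into $W^{2,1}_2(Q)$, and differentiating the identity $F(G(v,\psi),v,\psi)=0$ gives precisely that $w=G'(v,\psi)(v',\psi')$ solves \eqref{der_frechet_1}. Finally, continuity of $(v,\psi)\mapsto G'(v,\psi)$ follows from continuity of $y\mapsto f'(y)$ from $W^{2,1}_2(Q)$ into $L^\infty(Q)$ combined with the uniform-in-coefficient linear estimate.

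The main obstacle is the Nemytskii-operator differentiability step: one must check carefully that the remainder $f(y+w)-f(y)-f'(y)w$ is $o(\|w\|)$ in the $L^2(Q)$ norm when $w\to 0$ in $W^{2,1}_2(Q)$, which uses the Taylor expansion with the bound on $f''$ and the embedding of $W^{2,1}_2(Q)$ into a space where products are controlled — this is the only place the dimension restriction $N\leq 6$ is genuinely needed, and it is worth isolating as a preliminary claim. Everything else is a routine application of maximal parabolic regularity and the implicit function theorem. Alternatively, and perhaps more transparently for the reader, I could prove differentiability directly: set $z=G(v+v',\psi+\psi')-G(v,\psi)-w$ with $w$ solving \eqref{der_frechet_1}, derive the equation satisfied by $z$ (whose right-hand side is the Nemytskii remainder above), and estimate $\|z\|_{W^{2,1}_2(Q)}$ by maximal regularity — this avoids invoking the abstract implicit function theorem but requires essentially the same ingredients.
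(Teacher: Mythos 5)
Your proposal is correct in substance, but it follows a genuinely different route from the paper's proof of this lemma. The paper argues directly on difference quotients: it sets $w^\tau=(y^\tau-y)/\tau$ with $y^\tau=y(h,v+\tau v^\prime,\psi+\tau\psi^\prime)$, writes the equation for $w^\tau$, uses the mean value theorem to express $\tfrac{1}{\tau}(f(y^\tau)-f(y))=f^\prime(\tilde y^\tau)w^\tau$, derives uniform energy and parabolic regularity bounds for $w^\tau$ in $W(0,T)$ and $W^{2,1}_2(Q)$, extracts a weakly convergent subsequence, and identifies the limit with the solution of \eqref{der_frechet_1} via compact embeddings; no Nemytskii differentiability and no dimension restriction is invoked at this stage (the hypothesis $N\leq 6$ is only exploited later, in the concavity estimate of Proposition \ref{propo_saddle}). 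Your implicit-function-theorem strategy is instead essentially the one the paper reserves for Lemma \ref{lemma_frechet_2}: rewrite the state equation as $F(y,v,\psi)=0$, check smoothness of the superposition operator, verify that $\partial_yF$ is an isomorphism by maximal parabolic regularity for \eqref{der_frechet_1}, and differentiate the identity. What your route buys is a genuinely quantitative Fr\'echet remainder estimate, since with $f^{\prime\prime}\in L^\infty$ the remainder is controlled by $\|w\|_{L^4(Q)}^2$ and the embedding $W^{2,1}_2(Q)\hookrightarrow L^4(Q)$ holds exactly for $N\leq 6$ (note this is the correct embedding to cite, not $W^{2,1}_2(Q)\hookrightarrow L^\infty(Q)$, which fails in these dimensions; your parenthetical ``sufficiently high $L^p$'' is what saves the claim); in this sense your argument is arguably more complete than the paper's, whose compactness argument really establishes the directional derivative and its characterization. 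What the paper's route buys is economy of hypotheses at this point and elementary tools (energy estimates plus weak compactness). One small repair to make in your write-up: continuity of $y\mapsto f^\prime(y)$ into $L^\infty(Q)$ is not available for $N\geq 2$; for the continuity of $(v,\psi)\mapsto G^\prime(v,\psi)$ it suffices to show $f^\prime(y_n)w\to f^\prime(y)w$ in $L^2(Q)$ using the Lipschitz bound on $f^\prime$, convergence in measure and dominated convergence, combined with the coefficient-uniform linear estimate you already state.
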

\begin{proof}
We derive in a straightforward manner the first derivative of the operator $G$ and its characterization. The arguments used here are by now classic, see for instance \cite{zuazua_fer}, \cite{seidman}.

Given $(v^\prime,\psi^\prime)\in L^2(\mathcal O\times(0,T))\times L^2(Q)$ and $\tau\in (0,1)$, we will prove the Fr\'echet-differentiability of $G$ by showing the convergence of $w^\tau\to w$ as $\tau\to 0$ where $w^\tau:=(y^\tau-y)/\tau$ for $\tau\neq 0$, with $y=y(h,\psi,v)$, $y^\tau=y(h,v+\tau v^\prime,\psi+\tau \psi^\prime)$ and $w$ is the solution to the linear problem \eqref{der_frechet_1}. From a simple computation we obtain that $w^\tau$ satisfies 
\begin{equation}\label{Frechet_1}
\begin{cases}
w^\tau_t-\Delta w^\tau+\frac{1}{\tau}\left(f(y^\tau)-f(y)\right)=v^\prime\chi_{\mathcal{O}}+\psi^\prime, \quad \text{in } Q,\\
w^\tau=0\quad \text{on }\Sigma, \quad w^\tau(x,0)=0 \quad\text{in } \Omega.
\end{cases}
\end{equation}

Since $f$ is continuously differentiable, we can use the mean value theorem to deduce that 	
\begin{align}\label{g_tau}
g^{\tau}(x,t)&=\frac{1}{\tau}\left(f(y^\tau)-f(y)\right)=f'(\tilde y^\tau)w^\tau
\end{align}
where $\tilde y^\tau=y-\theta_\tau(y^\tau-y)$ with $\theta_\tau\in (0,1)$. Replacing \eqref{g_tau} in \eqref{Frechet_1} and then multiplying by $w^\tau$ in $L^2(\Omega)$, it is not difficult to see
\begin{align*}
\|w^\tau(t)\|_{L^2(\Omega)}+\|\nabla w^\tau\|_{L^2(0,T;H_0^1(\Omega))}\leq C\left(\|v^\prime\|_{L^2(0,T;L^2(\mathcal O))}+\|\psi^\prime\|_{L^2(Q)}\right), \\
\forall t\in [0,T], \quad \forall \tau\in (0,1).
\end{align*}
Hence, the sequence $\{w^\tau\}$ is bounded in $C([0,T];L^2(\Omega))\cap L^2(0,T;H_0^1(\Omega))$. Taking into account the above estimate, together with \eqref{Frechet_1}, \eqref{g_tau} and since $f$ is globally Lipschitz, we conclude that there exists a positive constant $C$ independent of $\tau$, $v^\prime$ and $\psi^\prime$ such that
\begin{equation*}
\|w^\tau\|_{W(0,T)}\leq C\left(\|v^\prime\|_{L^2(0,T;L^2(\mathcal O))}+\|\psi^\prime\|_{L^2(Q)}\right), \quad \forall \tau \in (0,1).
\end{equation*}
On the other hand, $w^\tau$ can be viewed as the solution of an initial boundary-value problem with right-hand side term $\tilde g:= v^\prime\chi_{\mathcal O}+\psi^\prime-g^\tau$ and zero initial datum. Thus, from classical energy estimates, 
\begin{equation*}
\|w^\tau\|_{W^{2,1}_2(Q)}\leq C\|\tilde g\|_{L^2(Q)}, \quad \forall \tau\in (0,1).
\end{equation*}
for some constant $C$ independent of $\tau$, $v^\prime$ and $\psi^\prime$. In view of the expression of $g^\tau$ and from the the global Lipschitz property of $f$, we deduce the existence of a positive constant still denoted by $C$, independent of $\tau$, $v^\prime$ and $\psi^\prime$, such that 
\begin{equation*}
\|w^\tau\|_{W^{2,1}_2(Q)}\leq C\left(\|v\|_{L^2(0,T;L^2(\mathcal O))}+\|\psi^\prime\|_{L^2(Q)}\right), \quad \forall \tau\in (0,1).
\end{equation*}

Since the space $W^{2,1}_2(Q)$ is reflexive, we have that (extracting a subsequence)
\begin{equation}\label{conv_wtau}
w^\tau\debil \hat w\quad \text{weakly in } W^{2,1}_2(Q),
\end{equation}
as $\tau\to 0$, for some element $\hat w$. Now, from the continuity of $y$ with respect to the data (see Eq. \eqref{est_cont_y}), and combining \eqref{conv_wtau} with the fact that $W^{2,1}_2(Q)\subset L^2(Q)$ with compact imbedding, we get
\begin{equation}\label{gtau}
g^\tau\to f^\prime(y)\hat w \quad\text{in } L^2(Q).
\end{equation}
Taking the weak limit in \eqref{Frechet_1} and using \eqref{gtau} is not difficult to see that $\hat w$ is solution to \eqref{der_frechet_1}. On the other hand, since $W^{2,1}_2(Q)\subset L^2(0,T;H_0^1(\Omega))$ with compact imbedding, we have that the convergence is strong in this space.  
\end{proof}

\begin{lemma}\label{lemma_frechet_2}
Under assumptions of Proposition \ref{lemma_frechet_1}. The operator $G:(\psi,v)\to y$ solution to \eqref{semi_heat} is twice continuously Fr\'echet differentiable from $L^2(\mathcal{O}\times(0,T))\times L^2(Q)\mapsto W(0,T)$. Moreover, the second derivative of $G$ at $(v,\psi)$ is given by the expression 
\begin{equation}\label{map_frechet_2}
G^{\prime\prime}(v,\psi)[(v_1,\psi_1),(v_2,\psi_2)]=z
\end{equation}
where $z$ is the unique weak solution to the problem 
\begin{equation}\label{frechet_segunda}
\begin{cases}
z_t-\Delta z+f^\prime(y)z=-f^{\prime\prime}(y)w_1w_2 \quad\textnormal{in } Q,\\
z=0\quad\textnormal{on } \Sigma, \quad z(x,0)=0 \quad\textnormal{in } \Omega.
\end{cases}
\end{equation}
with $y=G(v,\psi)$ solution to \eqref{semi_heat}, and where $w_i$ is the solution to \eqref{der_frechet_1} in the direction $(v_i,\psi_i)$.
\end{lemma}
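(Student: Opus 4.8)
The plan is to differentiate once more the map $(v,\psi)\mapsto G'(v,\psi)$ produced in Lemma \ref{lemma_frechet_1}, proceeding exactly in the spirit of that proof. I would fix $h$ and two directions $(v_1,\psi_1),(v_2,\psi_2)\in L^2(\mathcal O\times(0,T))\times L^2(Q)$. By Lemma \ref{lemma_frechet_1}, $w_2:=G'(v,\psi)(v_2,\psi_2)$ solves \eqref{der_frechet_1} in the direction $(v_2,\psi_2)$ with $y=G(v,\psi)$. To establish Fr\'echet differentiability of $(v,\psi)\mapsto G'(v,\psi)(v_2,\psi_2)$, for $\tau\in(0,1)$ I would set $y^\tau:=G(v+\tau v_1,\psi+\tau\psi_1)$, let $w_2^\tau$ be the solution of \eqref{der_frechet_1} with $y$ replaced by $y^\tau$ (so that $w_2^\tau=G'(v+\tau v_1,\psi+\tau\psi_1)(v_2,\psi_2)$), and study the difference quotient $z^\tau:=\tau^{-1}(w_2^\tau-w_2)$. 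Subtracting the two linear systems and dividing by $\tau$, one checks that $z^\tau$ solves
\begin{equation*}
\begin{cases}
z^\tau_t-\Delta z^\tau+f'(y^\tau)z^\tau=-\frac{f'(y^\tau)-f'(y)}{\tau}w_2 \quad\text{in }Q,\\
z^\tau=0\quad\text{on }\Sigma,\quad z^\tau(x,0)=0\quad\text{in }\Omega,
\end{cases}
\end{equation*}
and, by the mean value theorem, $\tau^{-1}\bigl(f'(y^\tau)-f'(y)\bigr)=f''(\eta^\tau)\,\tau^{-1}(y^\tau-y)$ for some intermediate state $\eta^\tau$ lying between $y$ and $y^\tau$. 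The candidate limit is $z$, the solution of \eqref{frechet_segunda} with $w_i$ the solution of \eqref{der_frechet_1} in the direction $(v_i,\psi_i)$.

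The heart of the proof is the passage to the limit in the right-hand side. Here I would invoke Lemma \ref{lemma_frechet_1}: $\tau^{-1}(y^\tau-y)\to w_1$ strongly in $L^2(0,T;H_0^1(\Omega))$, hence in $L^2(Q)$, while the family $\{\tau^{-1}(y^\tau-y)\}$ stays bounded in $W^{2,1}_2(Q)$; since $N\le 6$, the parabolic Sobolev embedding $W^{2,1}_2(Q)\hookrightarrow L^4(Q)$ gives a uniform $L^4(Q)$ bound. Since also $y^\tau\to y$ in $L^2(Q)$, we get $f''(\eta^\tau)\to f''(y)$ a.e. with $\|f''(\eta^\tau)\|_{L^\infty}\le\|f''\|_{L^\infty}$, so $f''(\eta^\tau)\,\tau^{-1}(y^\tau-y)\to f''(y)w_1$ strongly in $L^2(Q)$, and therefore the right-hand side converges to $-f''(y)w_1w_2$ strongly in $L^1(Q)$ while remaining bounded in $L^2(Q)$ — again it is the restriction $N\le6$ that guarantees $w_1w_2\in L^2(Q)$, so that \eqref{frechet_segunda} is well posed in the expected sense. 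With this in hand, a standard energy estimate (multiply the equation for $z^\tau$ by $z^\tau$, use that $f'$ is bounded, and apply Gronwall) provides a bound for $\{z^\tau\}$ in $W(0,T)$ uniform in $\tau$; extracting a subsequence $z^\tau\debil\hat z$ weakly in $W(0,T)$, using the compact embedding $W(0,T)\hookrightarrow L^2(Q)$ to pass to the limit in $f'(y^\tau)z^\tau$, and invoking uniqueness for \eqref{frechet_segunda}, one identifies $\hat z=z$ and deduces convergence of the whole family. A further energy estimate on $z^\tau-z$ then upgrades this to strong convergence $z^\tau\to z$ in $W(0,T)$. This yields $G''(v,\psi)[(v_1,\psi_1),(v_2,\psi_2)]=z$, with bilinearity and symmetry in the two directions immediate from \eqref{frechet_segunda}.

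To finish, I would prove that $(v,\psi)\mapsto G''(v,\psi)$ is continuous from $L^2(\mathcal O\times(0,T))\times L^2(Q)$ into the space of bounded bilinear maps with values in $W(0,T)$. If $(v^n,\psi^n)\to(v,\psi)$, then $y^n:=G(v^n,\psi^n)\to y$ in $W^{2,1}_2(Q)$ and, arguing as in Lemma \ref{lemma_frechet_1}, the linearized solutions $w_i^n$ of \eqref{der_frechet_1} converge to $w_i$ in $W^{2,1}_2(Q)$, with estimates uniform for directions of norm at most one; the same embedding and dominated-convergence arguments give $-f''(y^n)w_1^nw_2^n\to-f''(y)w_1w_2$ in $L^2(Q)$, and an energy estimate for the difference of the corresponding solutions of \eqref{frechet_segunda} gives $z^n\to z$ in $W(0,T)$, uniformly in the directions. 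Hence $G$ is twice continuously Fr\'echet differentiable.

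I expect the main obstacle to be precisely the passage to the limit in $\tau^{-1}(f'(y^\tau)-f'(y))w_2$: this is the one place where the extra regularity $f\in C^2(\mathbb R)$, $f''\in L^\infty(\mathbb R)$ and the dimensional constraint $N\le6$ are genuinely used, the latter only through the embedding $W^{2,1}_2(Q)\hookrightarrow L^4(Q)$, which is what makes products of first-order derivatives lie in $L^2(Q)$. The remaining ingredients — energy estimates, weak compactness, identification of the limit by uniqueness, and the continuity statement — are routine and mirror the proof of Lemma \ref{lemma_frechet_1}.
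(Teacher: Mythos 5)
Your argument is correct in substance, but it follows a genuinely different route from the paper. The paper does not differentiate $G'$ by difference quotients at all: it rewrites the state equation as a fixed--point identity $y=G_Q(v,\psi-f(y))+G_0(h,y_0)$ with the linear solution operators $G_Q$, $G_0$, sets $F(y,v,\psi)=y-G_Q(v,\psi-f(y))-G_0(h,y_0)$, observes that $F$ is twice continuously Fr\'echet differentiable (linearity of $G_Q,G_0$ plus smoothness of the superposition operator $y\mapsto f(y)$), checks that $\partial_yF(y,v,\psi)$ is invertible via well-posedness of the linearized problem \eqref{partial_y}, and invokes the implicit function theorem so that $G$ inherits the $C^2$ regularity of $F$; the formula \eqref{frechet_segunda} is then obtained by differentiating the identity $y=G_Q(v,\psi-f(G(v,\psi)))+G_0(h,y_0)$ twice and reading off the equation satisfied by $z$. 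Your proof instead mimics the proof of Lemma \ref{lemma_frechet_1}: difference quotient $z^\tau$ for the linearized states, mean value theorem for $f'$, the embedding $W^{2,1}_2(Q)\hookrightarrow L^4(Q)$ for $N\le 6$ to control the product terms and to make the right-hand side $-f''(y)w_1w_2$ of \eqref{frechet_segunda} an $L^2(Q)$ function, then energy estimates, weak compactness and identification of the limit by uniqueness. What your approach buys is that it makes completely explicit where $f\in C^2$, $f''\in L^\infty(\mathbb R)$ and the dimensional restriction enter, points which in the paper are hidden inside the (nontrivial, and stated without proof) claim that the Nemytskii map $y\mapsto f(y)$ is twice continuously Fr\'echet differentiable between the relevant spaces; what the paper's route buys is brevity and a clean derivation of the second-derivative formula without any passage to the limit. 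One small point you should make explicit: your difference-quotient computation gives the G\^ateaux derivative of $(v,\psi)\mapsto G'(v,\psi)$, and it is the continuity statement in your last paragraph (uniform over directions of norm at most one) that upgrades this, by the standard criterion, to continuous Fr\'echet differentiability of second order; as written this step is implicit rather than stated.
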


\begin{proof}
We follow will the arguments of \cite{trol} and use the implicit function theorem to deduce that $G$ is twice continuously Fr\'echet differentiable. We rewrite system \eqref{semi_heat} as follows:
\begin{equation}\label{y_mapeos}
y=G_Q(v,\psi-f(y))+G_0(h,y_0)
\end{equation}
where $G_Q\in \mathcal{L}(L^2(\mathcal O\times(0,T))\times L^2(Q);W(0,T))$ and $G_0\in \mathcal L(L^2(\omega\times(0,T))\times L^2(\Omega);W(0,T))$. More specifically,  we rewrite $y$ as
\begin{equation}\label{y_split}
y=y_1+y_2
\end{equation}
where $y_1$ and $y_2$ are solution to
\begin{equation*}
\left.\begin{array}{cc}
\begin{cases}
y_{1,t}-\Delta y_1=v\chi_{\mathcal O}+\psi-f(y) \quad \text{in Q} \\
y_1=0\quad\text{on }\Sigma, \quad y_1(x,0)=0\quad\text{in } \Omega
\end{cases}, & 
\begin{cases}
y_{2,t}-\Delta y_2=h\chi_{\omega} \quad \text{in Q} \\
y_2=0\quad\text{on }\Sigma, \quad y_2(x,0)=y_0\quad\text{in } \Omega
\end{cases}
\end{array}\right.
\end{equation*}
Equivalently, we express equation \eqref{y_mapeos} in the form
\begin{equation*}
0=y-G_Q(v,\psi-f(y))+G_0y_0=:F(y,v,\psi)
\end{equation*}
In this way, $F$ is twice continuously Frechet differentiable from $L^2(\mathcal O\times(0,T))\times L^2(Q)\times L^2(\Omega)$ into $W(0,T)$. Indeed, $G_Q$ and $G_0$ are continuous linear mappings and the operator $y\mapsto f(y)$ is twice continuously Frechet differentiable. 

On the other hand, the derivative $\partial_y F(y,v,\psi)$ is surjective. In fact, 
\begin{equation*}
\partial_y F(y,v,\psi)=\overline w
\end{equation*}
is equivalent to 
\begin{equation*}
\overline{w}=\overline y+G_Q(0,-f'(y)\overline y).
\end{equation*}
Setting $\zeta=\overline y-\overline w$ and using the definition of the mapping $G_Q$, we have that the above equation is equivalent to the problem
\begin{equation}\label{partial_y}
\begin{cases}
\zeta_t-\Delta \zeta=f'(y)\zeta+f'(y)\overline w \quad\text{in } Q, \\
\zeta=0 \quad\text{on }\Sigma, \quad \zeta(x,0)=0 \quad\text{in } \Omega,
\end{cases}
\end{equation}
Thanks to the assumptions on $f$, for every $\overline w\in L^2(Q)$, system \eqref{partial_y} has a unique solution $\zeta\in W(0,T)$. Hence, by the implicit function theorem, the equation $F(y,v,\psi)=0$ has a unique solution $y=y(v,\psi)$ in some open neighborhood of any arbitrarily chosen point $(\widetilde y,\widetilde v,\widetilde\psi)$. Moreover, the implicit function theorem yields that $G$ inherits the smoothness properties of $F$, therefore $G$ is twice continuously Fr\'echet differentiable. 

To obtain the characterization of the second derivate, we note from \eqref{y_mapeos} that 
\begin{equation*}
y=G(v,\psi)=G_Q(v,\psi-f(G(v,\psi)))+G_0y_0.
\end{equation*}
Then, differentiating on both sides of the above equation with respect to $(v,\psi)$ in the direction $(v_1,\psi_1)$, we get
\begin{equation*}
G^\prime(v,\psi)(v_1,\psi_1)=-G_Q\big(f^\prime(G(v,\psi))[G^\prime(v,\psi)(v_1,\psi_1)]\big)+G_Q(v_1,\psi_1)
\end{equation*}
Repeating the process in the direction $(v_2,\psi_2)$ yields
\begin{align*}
G''(v,\psi)[(v_1,\psi_1),(v_2,\psi_2)]=&-G_Q\left\{f^{\prime\prime}(G(v,\psi))\left(G^\prime(v,\psi)(v_1,\psi_1)\right)\left(G^\prime(v,\psi)(v_2,\psi_2)\right)\right. \\
&\left.+f^\prime(G(v,\psi))G^{\prime\prime}(v,\psi)[(v_1,\psi_1),(v_2,\psi_2)]\right\}
\end{align*}
Setting $y=G(v,\psi)$, $w_i=G^\prime(v,\psi)(v_i,\psi_i)$ and $z=G^{\prime\prime}(v,\psi)[(v_1,\psi_1),(v_2,\psi_2)]$ in the previous equation, we obtain that
\begin{equation*}
z=-G_Q\left\{f^{\prime\prime}(y)w_1w_2+f'(y)z\right\}
\end{equation*}
Therefore, from the definition of $G_Q$, we conclude that $z$ is solution to \eqref{frechet_segunda}.
\end{proof}

With Lemmas \ref{lemma_frechet_1} and \ref{lemma_frechet_2}, we are ready to proof one of the main result of this section:

\begin{proposition}\label{propo_saddle}
Under assumptions of Lemma \ref{lemma_frechet_1}. Let $y_0\in L^2(\Omega)$ and $h\in L^2(\omega\times(0,T))$ be given. Then, for $\gamma$ and $\ell$ sufficiently large, there exists a saddle point $(\bar v,\bar \psi)\in L^2(\mathcal O\times(0,T))\times L^2(Q)$ and $\bar y=\bar y(h,\bar v,\bar \psi)$ such that  
\begin{equation}\label{saddle_point}
 J_r(\bar v,\psi;h)\leq  J_r(\bar v,\bar \psi;h)\leq  J_r(v,\bar \psi;h), \quad \forall (v,\psi)\in L^2(\mathcal O\times(0,T))\times L^2(Q).
\end{equation}
\end{proposition}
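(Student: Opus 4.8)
The plan is to verify the four hypotheses of Proposition~\ref{prop_saddle_ekeland} for the functional $J_r(\cdot,\cdot;h)$ on $X=L^2(\mathcal O\times(0,T))$ and $Y=L^2(Q)$, with $h$ fixed. Throughout I write $y=G(v,\psi)$ for the solution map of Lemma~\ref{lemma_frechet_1}, which is $C^2$, and I abbreviate $\Phi(v,\psi):=\frac12\iint_{\mathcal O_d\times(0,T)}|G(v,\psi)-y_d|^2$, so that
\begin{equation*}
J_r(v,\psi;h)=\Phi(v,\psi)+\frac{\ell^2}{2}\|v\|^2_{L^2(\mathcal O\times(0,T))}-\frac{\gamma^2}{2}\|\psi\|^2_{L^2(Q)}.
\end{equation*}

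\textbf{Semicontinuity (parts of 1 and 2).} The map $G$ is (even Lipschitz) continuous from $X\times Y$ into $W(0,T)\hookrightarrow L^2(Q)$ by the a priori estimate \eqref{est_cont_y}, hence $\Phi$ is continuous on $X\times Y$ for the strong topology, and since it is moreover convex in $v$ for fixed $\psi$? — no, here convexity is \emph{not} free, see below — at least it is weakly lower semicontinuous in each variable because it is continuous and convex after the perturbation argument. For the weak semicontinuity needed in Proposition~\ref{prop_saddle_ekeland}: $v\mapsto \Phi(v,\psi)$ and $\psi\mapsto\Phi(v,\psi)$ are continuous for the strong topology; the quadratic terms $\pm\tfrac12\|\cdot\|^2$ are strongly continuous; so once convexity/concavity is established, lower/upper semicontinuity for the weak topology follows automatically from convexity plus strong continuity. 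Thus the crux is convexity.

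\textbf{Convexity in $v$, concavity in $\psi$ (the heart of 1 and 2).} Fix $\psi$ and consider $j(v):=J_r(v,\psi;h)$. Using Lemmas~\ref{lemma_frechet_1}--\ref{lemma_frechet_2} I would compute $j''(v)(v',v')$. Writing $w=G'(v,\psi)(v',0)$ (solution of \eqref{der_frechet_1} with datum $v'\chi_{\mathcal O}$) and $z=G''(v,\psi)[(v',0),(v',0)]$ (solution of \eqref{frechet_segunda}), one gets
\begin{equation*}
j''(v)(v',v')=\iint_{\mathcal O_d\times(0,T)}|w|^2+\iint_{\mathcal O_d\times(0,T)}(G(v,\psi)-y_d)\,z+\ell^2\|v'\|^2_{L^2(\mathcal O\times(0,T))}.
\end{equation*}
The first and third terms are nonnegative; the middle term is the obstruction to convexity. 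Here I use the hypotheses $f''\in L^\infty(\mathbb R)$ and $N\le 6$: by parabolic regularity (the $W^{2,1}_2$ bound for $w$ in Lemma~\ref{lemma_frechet_1}) and Sobolev embedding, $w\in L^4(Q)$ for $N\le 6$, so $f''(y)w^2\in L^2(Q)$ with $\|f''(y)w^2\|_{L^2(Q)}\le \|f''\|_\infty\|w\|_{L^4}^2\le C\|v'\|^2_{L^2(\mathcal O\times(0,T))}$; then the energy estimate for \eqref{frechet_segunda} gives $\|z\|_{L^2(Q)}\le C\|v'\|^2$. Also $\|G(v,\psi)-y_d\|_{L^2(\mathcal O_d\times(0,T))}\le C(\|y_0\|+\|h\|+\|v\|+\|\psi\|+\|y_d\|)=:R$, uniformly on bounded sets. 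Hence the middle term is bounded below by $-CR\|v'\|^2$, and
\begin{equation*}
j''(v)(v',v')\ge(\ell^2-CR)\|v'\|^2_{L^2(\mathcal O\times(0,T))}.
\end{equation*}
The subtlety is that $R$ involves $\|v\|$, so this is not yet a uniform lower bound; I circumvent this by the standard coercivity trick: first establish hypotheses 3--4 to localize the sup/inf to a bounded set, or (cleaner) argue that $j$ is convex on each ball and that the minimization in the Stackelberg problem may be restricted to a ball, on which $R$ is controlled, so that for $\ell$ large ($\ell>\ell_0$ with $\ell_0$ depending on that ball) $j''\ge0$. Symmetrically, fixing $v$ and differentiating twice in $\psi$ with datum $\psi'$ produces $w=G'(v,\psi)(0,\psi')$ and gives
\begin{equation*}
\partial_\psi^2 J_r(v,\psi;h)(\psi',\psi')\le C R\,\|\psi'\|^2_{L^2(Q)}-\gamma^2\|\psi'\|^2_{L^2(Q)}=(CR-\gamma^2)\|\psi'\|^2_{L^2(Q)}\le0
\end{equation*}
for $\gamma>\gamma_0$, giving concavity in $\psi$.

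\textbf{Coercivity (3 and 4).} Take $v_0=0$. Then $y=G(0,\psi)$ and a linearization/Gronwall bound gives $\|G(0,\psi)-y_d\|_{L^2(\mathcal O_d\times(0,T))}\le C_0+C_1\|\psi\|_{L^2(Q)}$ for constants depending on $\|y_0\|,\|h\|,\|y_d\|$; hence $\Phi(0,\psi)\le \tfrac12(C_0+C_1\|\psi\|)^2$ grows at most quadratically in $\|\psi\|$ with coefficient $\tfrac12 C_1^2$, while the $-\tfrac{\gamma^2}{2}\|\psi\|^2$ term dominates once $\gamma^2>C_1^2$; therefore $J_r(0,\psi;h)\to-\infty$ as $\|\psi\|_{L^2(Q)}\to\infty$, which is hypothesis~3 and simultaneously pins down $\gamma_0$. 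Symmetrically, taking $\psi_0=0$, $\Phi(v,0)\ge0$ and $J_r(v,0;h)\ge \tfrac{\ell^2}{2}\|v\|^2_{L^2(\mathcal O\times(0,T))}\to+\infty$, which is hypothesis~4 (no smallness of $\ell$ needed here — only for convexity). With hypotheses 1--4 verified, Proposition~\ref{prop_saddle_ekeland} yields a saddle point $(\bar v,\bar\psi)$ and the $\min\sup=\max\inf$ identity, which is exactly \eqref{saddle_point}.

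\textbf{Main obstacle.} The delicate point is the convexity/concavity argument: the second-order term coming from $f''$ is genuinely indefinite, and controlling it requires (i) the dimensional restriction $N\le6$ together with $f''\in L^\infty$ to place $w^2$ in $L^2(Q)$ via Sobolev embedding and parabolic regularity, and (ii) a careful handling of the fact that the ``radius'' $R$ bounding $\|y-y_d\|$ depends on the very variable one is differentiating in — resolved either by reducing the Stackelberg minimization to a bounded set a priori, or by combining the coercivity estimates 3--4 to confine the relevant sup/inf to such a set before invoking convexity. The uniqueness of the saddle point, not asserted in this Proposition but needed later, will follow from \emph{strict} convexity/concavity, i.e. the strict inequalities $\ell^2-CR>0$, $\gamma^2-CR>0$.
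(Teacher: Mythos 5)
Your proposal follows essentially the same route as the paper's proof: verifying conditions 1--4 of Proposition \ref{prop_saddle_ekeland}, computing $\mathcal G''(0)$ through the differentiability Lemmas \ref{lemma_frechet_1}--\ref{lemma_frechet_2}, bounding the second-derivative term coming from $f''$ so that the $\gamma^2$ (resp. $\ell^2$) term dominates, and using the same $v_0=0$, $\psi_0=0$ coercivity arguments; your parabolic embedding $W^{2,1}_2(Q)\hookrightarrow L^4(Q)$ for $N\le 6$ simply replaces the paper's interpolation of $L^2(0,T;H^2(\Omega))\cap L^\infty(0,T;H^1_0(\Omega))$ and yields the identical estimate $\|z\|_{L^2(Q)}\le C\|v'\|^2$. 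The non-uniformity you flag (the constant multiplying $\|\psi'\|^2$ or $\|v'\|^2$ involves $\|y-y_d\|_{L^2(\mathcal O_d\times(0,T))}$ at the evaluation point) is present, though unacknowledged, in the paper's own argument, which silently absorbs that factor into $C_2$, so your candid localization remark does not put you below the paper's level of rigor.
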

\begin{proof}

In order to prove the existence of the saddle point $(\bar v,\bar \psi)$ we will verify conditions 1--4 from Proposition \ref{prop_saddle_ekeland}. 

\textit{Condition 1}. By Lemma \ref{lemma_frechet_1}, and since the norm is lower semicontinuous, the map $\psi\mapsto \mathcal J(v,\psi)$ is upper semicontinuous. To check the concavity, we will show that 
\begin{equation*}
\mathcal G(\tau)=J_r\left(v,\psi+\tau\psi^\prime\right)
\end{equation*}
is concave with respect to $\tau$ near $\tau=0$, that is, $\mathcal G^{\prime\prime}(0)<0$. Using the notation previously introduced, we set $y:=G(v,\psi+\tau\psi^\prime)$. In view of the results of Lemmas \ref{lemma_frechet_1} and \ref{lemma_frechet_2}, we have that $\mathcal G(\tau)$ is a composition of twice differentiable maps. Then it can be readily verified that  
\begin{align*}
\mathcal G^\prime(\tau)=&\iint_{\mathcal O_d\times(0,T)}\left(G(v,\psi+\tau\psi^\prime)-y_d\right)G^\prime(v,\psi+\tau \psi^\prime)(0,\psi^\prime)dxdt \\
&-\gamma^2\iint_Q\left(\psi+\tau\psi^\prime\right)\psi^\prime dxdt.
\end{align*}
A further differentiation with respect to $\tau$ yields
\begin{equation}\label{g_2prima}
\begin{split}
\mathcal G^{\prime\prime}(\tau)=&\iint_{\mathcal O_d\times(0,T)}\left(G(v,\psi+\tau\psi^\prime)-y_d\right)G^{\prime\prime}(v,\psi+\tau\psi^\prime)\left[(0,\psi^\prime),(0,\psi^\prime)\right]dxdt \\
&+\iint_{\mathcal O_d\times(0,T)}\left|G^\prime(v,\psi+\tau\psi^\prime)(0,\psi^\prime)\right|^2dxdt-\gamma^2\iint_{Q}|\psi^\prime|^2dxdt.
\end{split}
\end{equation}
We define $y^\prime:=G(v,\psi+\tau\psi^\prime)(0,\psi^\prime)$ and $y^{\prime\prime}:=G^{\prime\prime}(v,\psi+\tau\psi^\prime)[(0,\psi^\prime),(0,\psi^\prime)]$ which, according to Lemmas \ref{lemma_frechet_1} and \ref{lemma_frechet_2}, are solution to 
\begin{align}\label{y_prima}
&\begin{cases}
y^\prime_t-\Delta y^\prime+f^\prime(y)y^\prime=\psi^\prime \quad\text{in }Q, \\
y^\prime=0 \quad\text{on } \Sigma, \quad y^\prime(x,0)=0 \quad\text{in } \Omega,
\end{cases} \\
&\begin{cases}\label{y_2prima}
y^{\prime\prime}_t-\Delta y^{\prime\prime}+f^{\prime}(y)y^{\prime\prime}=-f^{\prime\prime}(y)|y^\prime|^2 \quad\text{in }Q, \\
y^{\prime\prime}=0 \quad\text{on } \Sigma, \quad y^{\prime\prime}(x,0)=0 \quad\text{in } \Omega.
\end{cases}
\end{align}
Then, we rewrite \eqref{g_2prima} as
\begin{equation}\label{g_ys}
\mathcal G^{\prime\prime}(\tau)=\iint_{\mathcal O_d\times(0,T)}(y-y_d)y^{\prime\prime}dxdt+\iint_{\mathcal O_d\times(0,T)}|y^\prime|^2dxdt-\gamma^2\iint_Q|\psi^\prime|^2dxdt.
\end{equation}
Now, we will see that for sufficiently large $\gamma$ the last term in the above equation dominates and thus $\mathcal G^{\prime\prime}(0)<0$ for $(v,\psi)\in L^2(\mathcal O\times(0,T))\times L^2(Q)$.  

We begin by estimating the second term. Thanks to the assumptions on $f$, there exists $L>0$ be such that $|f^\prime(s)|+|f^{\prime\prime}(s)|\leq L$, $\forall s\in \mathbb{R}$. Since the linear system \eqref{y_prima} has a unique solution $y^\prime\in W(0,T)$ for any $\psi^\prime\in L^2(Q)$, we can obtain 
\begin{equation}\label{conc_E1}
\iint_{\mathcal O_d\times(0,T)}|y^\prime|^2dxdt\leq C_1\iint_Q |\psi^\prime|^2dxdt.
\end{equation}
for some $C_1>0$ only depending on $\Omega$, $\mathcal O_d$, $L$ and $T$.

To compute the first term, we need an estimate for $y^{\prime\prime}$. We multiply \eqref{y_2prima} by $y^{\prime\prime}$ in $L^2(\Omega)$ and integrate by parts, whence
\begin{align*}
\frac{1}{2}\frac{d}{dt}\int_{\Omega}|y^{\prime\prime}|^2dx+\int_\Omega |\nabla y^{\prime\prime}|^2dx&=-\int_\Omega f^\prime(y)|y^{\prime\prime}|^2dx-\int_\Omega f^{\prime\prime}(y)|y^\prime|^2y^{\prime\prime} \\
&\leq L\int_\Omega |y^{\prime\prime}|^2dx+L\int_\Omega |y^\prime|^2 |y^{\prime\prime}|,
\end{align*}
and using Gronwall's and Poincare's inequality, we obtain
\begin{align}\label{est_ybi}
\iint_Q|y^{\prime\prime}|^2dxdt\leq C\iint_Q|y^\prime|^2|y^{\prime\prime}|dxdt.
\end{align}
Applying H\"older inequality in the above expression yields 
\begin{equation}\label{est_ybi_holder}
\iint_Q|y^{\prime\prime}|^2dxdt\leq C \|y^\prime\|_{L^{2p^\prime}(0,T;L^{2q^\prime}(\Omega))}^2\|y^{\prime\prime}\|_{L^p(0,T;L^q(\Omega))}
\end{equation}
where $1/p+1/p^\prime=1$ and $1/q+1/q^\prime=1$. To bound the right-hand side of the previous inequality, the idea is to find $p$ and $q$ such that 
\begin{equation*}
y^{\prime\prime}\in L^p(0,T;L^q(\Omega)), \quad y^\prime\in L^{2p^\prime}(0,T;L^{2q^\prime}(\Omega))
\end{equation*}

First, recall that $y^\prime$ is more regular than $W(0,T)$. In fact, from classical results (see, for instance, \cite{evans}) we have that $y^\prime \in L^2(0,T; H^2(\Omega))\cap L^\infty(0,T;H_0^1(\Omega))$ with $y^\prime_t\in L^2(0,T;L^2(\Omega))$. Moreover, we have the estimate
\begin{equation}\label{est_W2}
\|y^\prime\|_{L^\infty(0,T;H_0^1(\Omega))}+\|y^\prime\|_{L^2(0,T;H^2(\Omega))}+\|y^\prime_t\|_{L^2(0,T;L^2(\Omega))}\leq C\|\psi^\prime\|_{L^2(Q)}.
\end{equation}
In view of \eqref{est_W2}, it is reasonable to look for conditions such that the following embedding holds
\begin{equation}\label{embed_deseado}
L^2(0,T;H^2(\Omega))\cap L^\infty(0,T;H_0^1(\Omega))\hookrightarrow L^{2p^\prime}(0,T;L^{2q^\prime}(\Omega)).
\end{equation}

Let $X$ and $Y$ be Banach spaces. From well-known interpolation results, we have 
\begin{equation}\label{embed_simon}
L^{p_0}(0,T;X))\cap L^{p_1}(0,T;Y)\hookrightarrow L^{p_\theta}(0,T;B), \quad \frac{1}{p_\theta}=\frac{1-\theta}{p_0}+\frac{\theta}{p_1},
\end{equation}
with $0<\theta<1$ and where $B$ is the intermediate space of class $\theta$ (with respect to $X$ and $Y$), that is, $B$ is the space verifying 
\[
\|g\|_B\leq C \|g\|_X^{1-\theta}\|g\|_{Y}^\theta, \quad \forall g\in X\cap Y, \quad 0<\theta<1,
\]
for some $C$.

From \eqref{embed_deseado} and \eqref{embed_simon}, we deduce that 
\begin{equation}\label{int_T}
\frac{1}{2p^\prime}=\frac{\theta}{2}.
\end{equation}
On the other hand, from classical Sobolev embedding results, we have 
\begin{align} \label{esp_X}
H^2(\Omega) &\hookrightarrow L^{\frac{2N}{N-4}}(\Omega), \\ \label{esp_Y}
H^1_0(\Omega) &\hookrightarrow L^{\frac{2N}{N-2}}(\Omega).
\end{align}
for some maximal $N$ to be determined. Then, the space $L^{2q^\prime}(\Omega)$ is an intermediate space with respect to \eqref{esp_X} and \eqref{esp_Y} if
\begin{equation}\label{int_q_T}
\frac{1}{2q^\prime}=\frac{(N-4)\theta}{2N}+\frac{(N-2)(1-\theta)}{2N}, \quad 0<\theta<1.
\end{equation}
Setting $p^\prime$ to a fixed value such that $\theta\in (0,1)$ and replacing \eqref{int_T} into \eqref{int_q_T}, we obtain 
\begin{equation}\label{qprime_value}
q^\prime=\frac{p^\prime N}{p^\prime(N-2)-2},
\end{equation}
and from \eqref{int_T} and \eqref{qprime_value}, we deduce that 
\begin{equation}\label{p_q}
p=p^\prime/(p^\prime-1) \quad\text{and}\quad q=p^\prime N/(2p^\prime+2).
\end{equation}
Thus, from \eqref{embed_deseado}, \eqref{est_ybi_holder} and estimate \eqref{est_W2}, we get
\begin{equation*}
\iint_Q|y^{\prime\prime}|^2dxdt\leq C \|\psi^\prime\|_{L^2(Q)}^2\|y^{\prime\prime}\|_{L^p(0,T;L^q(\Omega))}.
\end{equation*}
It remains to verify that $L^{p}(0,T;L^q(\Omega))\hookrightarrow L^2(0,T;L^2(\Omega))$. From \eqref{p_q}, it is not difficult to see that this is true if $2\leq p^\prime$ and $N\leq 4(p^\prime+1)/p^\prime$. Setting $p^\prime=2$, we get $N\leq 6$ and thus the estimate
\begin{equation}\label{conc_E2}
\|y^{\prime\prime}\|_{L^2(0,T;L^2(\Omega))}\leq C_2 \|\psi^\prime\|_{L^2(Q)}^2.
\end{equation}
Putting together \eqref{g_ys}, \eqref{conc_E1} and \eqref{conc_E2} yields
\begin{equation*}
\mathcal G^{\prime\prime}(0)\leq (C_2+C_1-\gamma^2)\|\psi^\prime\|^2_{L^2(Q)}, \quad \forall \psi^\prime\in L^2(Q), \ \psi^\prime\neq 0.
\end{equation*}
Therefore, for $\gamma$ large enough we have $\mathcal G^{\prime\prime}(0)<0$ and therefore $\psi\mapsto J(v,\psi)$ is strictly concave. 

\textit{Condition 2}. By Lemma \ref{lemma_frechet_1}, and since the norm is lower semicontinuous, the map $v\mapsto J(v,\psi)$ is lower semicontinuous. In order to show convexity, it is sufficient to prove that 
\begin{equation*}
\mathcal G(\tau)=J(v+\tau v^\prime,\psi)
\end{equation*}
is convex with respect to $\tau$ near $\tau=0$, that is, $\mathcal G^{\prime\prime}(0)>0$. Arguing as above, we obtain 
\begin{equation}\label{g_convex}
\mathcal G^{\prime\prime}(\tau)=\iint_{\mathcal O_d\times(0,T)}(y-y_d)y^{\prime\prime}dxdt+\iint_{\mathcal O_d\times(0,T)}|y^\prime|^2dxdt+\ell^2\iint_{\omega\times(0,T)}|v^\prime|^2dxdt.
\end{equation}
where we have denoted $y^\prime=G(v+\tau v^\prime,\psi)(v^\prime,0)$ and $y^{\prime\prime}=G^{\prime\prime}(v+\tau v^\prime,\psi)[(v^\prime,0),(v^\prime,0)]$. Note that estimates for $y^\prime$ and $y^{\prime\prime}$ can be obtained in the same way as in the proof of Condition 1 by putting $v^\prime$ instead of $\psi^\prime$ in \eqref{y_prima}--\eqref{y_2prima}. Then, it is not difficult to see that
\begin{equation*}
\mathcal G^{\prime\prime}(0)\geq (l^2-C_2-C_1)\|v\|_{L^2(\mathcal O\times(0,T))}^2, \quad\forall v\in L^2(\mathcal O\times(0,T)), \ v\neq 0.
\end{equation*}
Thus, under the assumption that $\ell$ is large enough, $v\mapsto J(v,\psi)$ is strictly convex. 

\textit{Condition 3.} Taking $v=0$ and using formulas \eqref{y_mapeos}--\eqref{y_split} for $y=y(0,\psi)$ we obtain
\begin{align*}
J_r(0,\psi;h)&=\iint_{\mathcal O_d\times(0,T)}|y_1+y_2-y_d|^2dxdt-\frac{\gamma^2}{2}\iint_Q|\psi|^2dxdt \\
&\leq -\frac{\gamma^{2}}{2}\|\psi\|_{L^2(Q)}^2+C\|\psi\|_{L^2(Q)}^2+C_3,
\end{align*}
where $C_3$ is a positive constant only depending on $y_0$, $h$ and $y_d$. Hence, for a sufficiently large value of $\gamma$, condition 3 holds. 

\textit{Condition 4.} Taking $\psi=0$ in \eqref{func_robust} we get
\begin{equation*}
 J_r(v,0;h)\geq \frac{\ell^2}{2}\iint_{\mathcal O\times(0,T)}|v|^2dxdt,
\end{equation*}
and condition 4 follows immediately. This ends the proof. 
\end{proof}
\subsection{Characterization of the saddle point}\label{sec:charac_saddle}
The existence of a saddle point $(\bar v,\bar \psi)$ for the functional $J_r$ implies that 
\begin{equation}\label{nes_saddle}
\frac{\partial J_{r}}{\partial v}(\bar v,\bar \psi)=0 \quad \text{and}\quad \frac{\partial J_{r}}{\partial \psi}(\bar v,\bar \psi)=0,
\end{equation}
so our task is to find such expressions. Indeed, is not difficult to see that 
\begin{align}\label{grad_v}
\left(\frac{\partial J_r}{\partial v}(v,\psi),(v_1,0)\right)&=\iint_{\mathcal O_d\times(0,T)}(y-y_d)w_v dxdt+\ell^2\iint_{\mathcal O\times(0,T)}vv_1 dxdt \\ \label{grad_psi}
\left(\frac{\partial J_r}{\partial \psi}(v,\psi),(0,\psi_1)\right)&=\iint_{\mathcal O_d\times(0,T)}(y-y_d)w_\psi dxdt-\gamma^2\iint_{Q}\psi \psi_1 dxdt
\end{align}
where $w_v$ and $w_\psi$ are the directional derivatives of $y$ solution to \eqref{semi_heat} in the directions $(v_1,0)$ and $(0,\psi_1)$, respectively. To determine the solution of the robust control, we define the adjoint state
\begin{equation}\label{adjoint_robust}
\begin{cases}
-q_t-\Delta q+f^\prime(y)q=(y-y_d)\chi_{\mathcal O_d} \quad\text{in }Q, \\
q=0 \quad\text{on }\Sigma, \quad q(x,T)=0 \quad\text{in } \Omega.
\end{cases}
\end{equation}
We have the following result:
\begin{lemma}
Let $y=y(h,v,\psi)\in W(0,T)$ be the solution to \eqref{semi_heat}. Let $w$ be the solution to \eqref{der_frechet_1} with $(v_1,\psi_1)\in L^2(\mathcal O\times(0,T))\times L^2(Q)$ and $q$ be the solution to \eqref{adjoint_robust}. Then
\begin{equation}\label{opt_cond}
\iint_{\mathcal O_d\times(0,T)}(y-y_d)w\,dxdt=\iint_{\mathcal O\times(0,T)}q v_1dxdt+\iint_Qq\psi_1dxdt
\end{equation}
\end{lemma}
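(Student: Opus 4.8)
The plan is to prove the duality identity \eqref{opt_cond} by the standard adjoint-state computation: multiply the equation for $w$ in \eqref{der_frechet_1} by $q$, integrate over $Q$, integrate by parts in $x$ and $t$, and then recognize that the resulting boundary/initial-terminal terms vanish thanks to the homogeneous conditions $w(\cdot,0)=0$, $q(\cdot,T)=0$, $w|_\Sigma=q|_\Sigma=0$. Concretely, I would start from
\[
\iint_Q \bigl(w_t-\Delta w+f'(y)w\bigr)\,q\,dxdt=\iint_Q \bigl(v_1\chi_{\mathcal O}+\psi_1\bigr)q\,dxdt,
\]
which is exactly the left-hand side after the integrations by parts produce the right-hand side of \eqref{opt_cond}, namely $\iint_{\mathcal O\times(0,T)}qv_1\,dxdt+\iint_Q q\psi_1\,dxdt$.

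The key steps, in order: (i) justify that all integrals make sense and the integrations by parts are legitimate — here $w\in W^{2,1}_2(Q)$ by Lemma \ref{lemma_frechet_1} (or at least $w\in W(0,T)$) and $q\in W(0,T)$ is the weak solution to \eqref{adjoint_robust}, so the pairing $\langle w_t,q\rangle+\langle w,q_t\rangle$ and the Green formula $\iint_Q(-\Delta w)q=\iint_Q \nabla w\cdot\nabla q=\iint_Q(-\Delta q)w$ are valid (one may first argue for smooth data and pass to the limit by density, using the continuous dependence estimate \eqref{est_cont_y}); (ii) perform the time integration by parts, $\iint_Q w_t\,q\,dxdt=-\iint_Q w\,q_t\,dxdt+\int_\Omega w(T)q(T)-\int_\Omega w(0)q(0)$, and drop both endpoint terms since $w(\cdot,0)=0$ and $q(\cdot,T)=0$; (iii) perform the spatial integration by parts twice, with no boundary contribution because $w$ and $q$ vanish on $\Sigma$; (iv) collect terms: the left side becomes $\iint_Q w\bigl(-q_t-\Delta q+f'(y)q\bigr)dxdt$, and substituting the PDE \eqref{adjoint_robust} for $q$ this equals $\iint_Q w\,(y-y_d)\chi_{\mathcal O_d}\,dxdt=\iint_{\mathcal O_d\times(0,T)}(y-y_d)w\,dxdt$; (v) the right side is $\iint_Q q\bigl(v_1\chi_{\mathcal O}+\psi_1\bigr)dxdt=\iint_{\mathcal O\times(0,T)}qv_1\,dxdt+\iint_Q q\psi_1\,dxdt$. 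Equating the two gives \eqref{opt_cond}.

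I do not expect a serious obstacle here: the identity is the textbook adjoint-state argument, and the only point requiring a little care is the regularity/density justification of the integration by parts in time, i.e. making sense of $\int_\Omega w(T)q(T)$ and $\int_\Omega w(0)q(0)$ as traces. Since both $w$ and $q$ lie in $W(0,T)$ (indeed $w\in W^{2,1}_2(Q)$), the embedding $W(0,T)\hookrightarrow C([0,T];L^2(\Omega))$ makes these traces well-defined, and the integration-by-parts formula for the $L^2(0,T;H_0^1)$–$L^2(0,T;H^{-1})$ duality pairing applies directly. Hence the identity follows without approximation; if one prefers, one proves it first for $y_0\in H_0^1(\Omega)$ and $y_d$ smooth so that everything is classical, and then extends to general data by the continuity estimates \eqref{est_cont_y} and the linearity of \eqref{der_frechet_1} and \eqref{adjoint_robust} in the relevant data.

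Once \eqref{opt_cond} is established, it is worth noting (as the paper will surely use next) that combining it with \eqref{grad_v}--\eqref{grad_psi} and the optimality conditions \eqref{nes_saddle} yields the characterization $\ell^2\bar v=-q|_{\mathcal O\times(0,T)}$ and $\gamma^2\bar\psi=q|_{Q}$, where $q$ solves \eqref{adjoint_robust} with $y=\bar y$; this is the payoff of the lemma, but it is not needed for the proof of \eqref{opt_cond} itself.
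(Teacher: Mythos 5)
Your proposal is correct and is essentially the paper's own argument: the paper performs the same duality computation (multiplying the adjoint equation \eqref{adjoint_robust} by $w$ and integrating by parts, which is the mirror image of your multiplying \eqref{der_frechet_1} by $q$), using $w(\cdot,0)=0$, $q(\cdot,T)=0$ and the homogeneous Dirichlet conditions to kill the boundary terms. Your additional remarks on the trace/regularity justification via $W(0,T)\hookrightarrow C([0,T];L^2(\Omega))$ only make explicit what the paper leaves implicit.
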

\begin{proof}
We multiply \eqref{adjoint_robust} by $w$ in $L^2(Q)$ and integrate by parts, more precisely
\begin{align*}
\iint_Q(y-y_d)\chi_{\mathcal O_d}qdxdt&=\iint_{Q}\left(-q_t-\Delta q+f^\prime(y)q\right)w\, dxdt \\
&=-\int_\Omega qw\,dx\Big |_0^T +\iint_Qq\left(w_t-\Delta w+f^\prime(y)w\right)dxdt
\end{align*}
Upon substituting the initial data for $q$ and $w$ and the right-hand side of \eqref{der_frechet_1} in the above equation, we obtain \eqref{opt_cond}.
\end{proof}
Replacing \eqref{opt_cond} in \eqref{grad_v}, with $\psi_1=0$ and taking an arbitrary $v_1\in L^2(\mathcal O\times(0,T))$  we get
\begin{equation*}
\left(\frac{\partial J_r}{\partial v}(v,\psi),(v_1,0)\right)=\iint_{\mathcal O\times(0,T)}qv_1dxdt+\ell^2\iint_{\mathcal O\times(0,T)}vv_1dxdt, \quad \forall v_1\in L^2(\mathcal O\times(0,T)).
\end{equation*}
In particular, we deduce 
\begin{equation}\label{exp_grad_v}
\frac{\partial J_r}{\partial v}(v,\psi)=(q+\ell^2v)|_{\omega}.
\end{equation}
Analogously, from \eqref{opt_cond} and \eqref{grad_psi} with $v_1=0$ and $\psi_1\in L^2(Q)$ as arbitrary we have
\begin{equation*}
\left(\frac{\partial J_r}{\partial \psi}(v,\psi),(0,\psi_1)\right)=\iint_{Q}q\psi_1dxdt-\gamma^2\iint_{Q}\psi\psi_1dxdt, \quad \forall\psi_1\in L^2(Q).
\end{equation*}
whence
\begin{equation}\label{exp_grad_psi}
\frac{\partial J_r}{\partial \psi}(v,\psi)=q-\gamma^2\psi.
\end{equation}
\begin{proposition}\label{Prop:follower_robust}
Let $h\in L^2(\omega\times(0,T))$ and $y_0\in L^2(\Omega)$ be given. Let $(\bar v,\bar \psi)$ be a solution to the robust control problem in Definition \ref{def:robust}. Then
\begin{equation}\label{charac_saddle}
\bar v=-\frac{1}{\ell^2}q\,|_{\mathcal \omega} \quad\text{and}\quad \bar\psi=\frac{1}{\gamma^2}q
\end{equation}
where $q$ is found from the solution $(y,q)$ to the coupled system
\begin{equation}\label{foll_robust}
\begin{cases}
y_t-\Delta y+f(y)=h\chi_\omega-\frac{1}{\ell^2}q\chi_{\mathcal O}+\frac{1}{\gamma^2}q &\quad\textnormal{in }Q, \\
-q_t-\Delta q+f^\prime(y)q=(y-y_d)\chi_{\mathcal O_d} &\quad\textnormal{in }Q, \\
y=q=0 &\quad\textnormal{on }\Sigma, \\
y(x,0)=y_0(x), \quad q(x,T)=0 &\quad\textnormal{in }\Omega.
\end{cases}
\end{equation}
which admits a unique solution for sufficiently large $\gamma$ and $\ell$. 
\end{proposition}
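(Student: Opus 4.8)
\textbf{Proof proposal for Proposition \ref{Prop:follower_robust}.}

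The plan is to combine the first-order optimality conditions \eqref{nes_saddle} with the explicit gradient expressions \eqref{exp_grad_v} and \eqref{exp_grad_psi} already derived, and then to show that the resulting coupled forward–backward system \eqref{foll_robust} is well-posed for large $\gamma$ and $\ell$. First I would argue that, since Proposition \ref{propo_saddle} gives the existence of a saddle point $(\bar v,\bar\psi)$ and Lemmas \ref{lemma_frechet_1}--\ref{lemma_frechet_2} give the Fr\'echet differentiability of $J_r$, the saddle point must be a critical point, i.e. $\partial_v J_r(\bar v,\bar\psi)=0$ and $\partial_\psi J_r(\bar v,\bar\psi)=0$. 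Plugging in \eqref{exp_grad_v} and \eqref{exp_grad_psi} with $y=\bar y$ and $q$ the corresponding adjoint state solving \eqref{adjoint_robust} immediately yields $q+\ell^2\bar v=0$ on $\mathcal O\times(0,T)$ and $q-\gamma^2\bar\psi=0$ on $Q$, which is exactly \eqref{charac_saddle}. Substituting these expressions for $\bar v$ and $\bar\psi$ back into \eqref{semi_heat} and pairing with the adjoint equation \eqref{adjoint_robust} produces the coupled system \eqref{foll_robust}.

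The remaining — and main — task is to prove that \eqref{foll_robust} has a \emph{unique} solution when $\gamma$ and $\ell$ are large enough. I would set this up as a fixed-point argument: given $\hat y\in L^2(Q)$, first solve the backward equation $-q_t-\Delta q+f'(\hat y)q=(\hat y-y_d)\chi_{\mathcal O_d}$ with $q(\cdot,T)=0$, then solve the forward equation $y_t-\Delta y+f(y)=h\chi_\omega-\tfrac{1}{\ell^2}q\chi_{\mathcal O}+\tfrac{1}{\gamma^2}q$ with $y(\cdot,0)=y_0$, and call $\Lambda(\hat y)=y$ the resulting map on (a ball of) $L^2(Q)$. Using the continuous dependence estimate \eqref{est_cont_y}, the global Lipschitz bound on $f$ and $f'$, and standard parabolic energy estimates, one bounds $\|q\|$ by $C(\|\hat y\|_{L^2(Q)}+\|y_d\|+ \ldots)$ and then $\|y\|_{W(0,T)}$ by $C(\|y_0\|+\|h\|+(\tfrac{1}{\ell^2}+\tfrac{1}{\gamma^2})\|q\|)$; choosing $\gamma,\ell$ large makes the self-composed contribution of $\hat y$ strictly less than one, so $\Lambda$ maps a suitable closed ball into itself and is a contraction there. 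The Banach fixed point theorem then gives existence and uniqueness of $(y,q)$, hence of the saddle point.

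The subtlety to watch is that the fixed-point contraction constant must absorb \emph{both} the factor coming from $f'$ in the adjoint equation \emph{and} the coupling weights $\tfrac{1}{\ell^2},\tfrac{1}{\gamma^2}$; since $f'$ is only bounded by $L$ independently of $\gamma,\ell$, the term $\tfrac{1}{\ell^2}+\tfrac{1}{\gamma^2}$ multiplied by the $q$-to-$y$ constant (which itself involves $L$ and $T$) is what must be made small, so the thresholds $\gamma_0,\ell_0$ depend only on $\Omega$, $T$, $L$, $\mathcal O$, $\mathcal O_d$. I would also note that uniqueness of the saddle point follows alternatively from the strict convexity in $v$ and strict concavity in $\psi$ established in the proof of Proposition \ref{propo_saddle}, which is perhaps the cleanest way to conclude uniqueness without re-deriving it from the fixed-point estimate; the fixed-point argument is then only needed for the well-posedness of the optimality system \eqref{foll_robust} itself.
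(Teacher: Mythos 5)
Your derivation of the characterization \eqref{charac_saddle}--\eqref{foll_robust} is exactly the paper's argument: the saddle point must satisfy \eqref{nes_saddle}, and plugging in the gradient formulas \eqref{exp_grad_v} and \eqref{exp_grad_psi} gives $\bar v=-\ell^{-2}q|_{\mathcal O}$, $\bar\psi=\gamma^{-2}q$ and the coupled system. Where you diverge is in how the final clause (``admits a unique solution for $\gamma,\ell$ large'') is handled. The paper does \emph{not} prove well-posedness of \eqref{foll_robust} by a direct PDE argument: existence of a solution to \eqref{foll_robust} comes for free from the existence of the saddle point (Proposition \ref{propo_saddle}), and uniqueness is obtained by the two-line contradiction argument based on the strict convexity in $v$ and strict concavity in $\psi$ -- precisely the alternative you mention only in passing at the end. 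Your main effort, the Banach fixed-point scheme $\hat y\mapsto q\mapsto y$, is therefore extra machinery relative to the paper; its payoff would be a genuinely stronger statement (uniqueness of solutions of the optimality system itself, not merely of the saddle point, since a solution of \eqref{foll_robust} is a priori only a critical point), but as sketched it has a gap: the contraction estimate requires a Lipschitz bound for $\hat y\mapsto q$, and the difference of adjoint states satisfies a backward equation with the source term $\bigl(f'(\hat y_1)-f'(\hat y_2)\bigr)q_2$. Bounding this product needs $f''\in L^\infty$ \emph{and} integrability of $q_2$ beyond $L^2(Q)$ (e.g.\ the $W^{2,1}_2$ regularity plus the interpolation/Sobolev argument used in the proof of Proposition \ref{propo_saddle}, which is where the restriction $N\leq 6$ enters); your appeal to ``the global Lipschitz bound on $f$ and $f'$ and standard energy estimates'' does not cover it, and the resulting constant also feeds into the size of the invariant ball, so the smallness of $\ell^{-2}+\gamma^{-2}$ must beat a radius-dependent quantity. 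If you keep the fixed-point route, these points need to be made explicit; otherwise the paper's convexity--concavity uniqueness argument, which you already identified, is the shorter and self-contained way to close the proof.
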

\begin{proof}
The existence of the solution to the robust control problem is ensured by Proposition \ref{propo_saddle} provided the parameters $\gamma$ and $\ell$ are large enough. A necessary condition for $(\bar v,\bar \psi)$ to be a saddle point of $ J_r$ is given in \eqref{nes_saddle}, therefore from \eqref{exp_grad_v} and \eqref{exp_grad_psi} we conclude that \eqref{charac_saddle}--\eqref{foll_robust} holds.  

To check uniqueness assume that $(\bar v,\bar \psi)$ and $(\tilde v,\tilde \psi)$ are two different saddle points in $L^2(\omega\times(0,T))\times L^2(Q)$. Then, from the strict convexity and strict concavity proved in Proposition \ref{propo_saddle}, we have
\begin{equation*}
\mathcal J(\tilde v,\tilde \psi)<\mathcal J(\bar v,\tilde \psi)<\mathcal J(\bar v,\bar \psi)
\end{equation*}
On the other hand, 
\begin{equation*}
\mathcal J(\bar v,\bar \psi)<\mathcal J(\tilde v,\bar \psi)<\mathcal J(\tilde v,\tilde \psi)
\end{equation*}
These lead to a contradiction, and therefore the saddle point $(\bar v,\bar \psi)$ is unique. 
\end{proof}
Summarizing, what we found in this section is that given a leader control $h$, there exists a unique solution to the robust control problem stated in Definition \ref{def:robust}. Moreover, it is characterized by the coupled system \eqref{foll_robust}. However, this characterization added a second equation coupled to the original system, so we need to take into account system \eqref{foll_robust} to obtain a solution to the leader's minimization problem (see Remark \ref{rem:stack}). 

\section{The null controllability problem: the observability inequality}\label{sec:null_robust}

Once the optimal strategy for the follower control has been chosen (see Section \ref{sec:charac_saddle}), the next step in the hierarchic methodology is to obtain an optimal control $\hat h$ such that
\begin{equation}\label{null_opt_leader}
J(\hat h)=\min_{h}J(h) \quad \text{subject to}\quad   y(\cdot, T)=0.
\end{equation}
where $y$ can be found from the solution $(y,q)$ to \eqref{foll_robust}. We start by proving an observability inequality for the adjoint system to the linearized version of \eqref{foll_robust}
\begin{equation}\label{adj_robust}
\begin{cases}
-\varphi_t-\Delta \varphi+a\varphi=\theta\chi_{\mathcal O_d} &\quad\textnormal{in }Q, \\
\theta_t-\Delta\theta+c\theta=-\frac{1}{\ell^2}\varphi\chi_{\mathcal O}+\frac{1}{\gamma^2}\varphi &\quad\textnormal{in }Q, \\
y=q=0 &\quad\textnormal{on }\Sigma, \\
\varphi(x,T)=\varphi^T(x), \quad \theta(x,0)=0 &\quad\textnormal{in }\Omega.
\end{cases}
\end{equation}
where $a,c\in L^\infty(Q)$ and $\varphi^T\in L^2(\Omega)$. Such inequality will be the main tool to conclude the proof of Theorem \ref{teo_main}. 

The main result of this section is the following one:
\begin{proposition}\label{propo:obs_ineq}
Assume that $\omega\cap\mathcal O_d\neq \emptyset$ and that $\gamma$ and $\ell$ are large enough. There exist a positive constant C only depending on $\Omega$, $\omega$, $\mathcal O$, $\mathcal O_d$, $\|a\|_\infty$, $\|c\|_\infty$, and $T$, and a weight function $\rho=\rho(t)$ blowing up at $t=T$ only depending on $\Omega$, $\omega$, $\mathcal O_d$, $\|a\|_\infty$, $\|c\|_\infty$ and $T$ such that, for any $\varphi^T\in L^2(\Omega)$, the solution $(\varphi,\theta)$ to \eqref{adj_robust} satisfies
\begin{equation}\label{ineq_robust}
\int_{\Omega}|\varphi(0)|^2dx+\iint_Q\rho^{-2}|\theta|^2dxdt\leq C\iint_{\omega\times(0,T)}|\varphi|^2dxdt.
\end{equation}
\end{proposition}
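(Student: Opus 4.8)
The plan is to establish the observability inequality \eqref{ineq_robust} by a Carleman-type argument adapted to the cascade (non-triangular) structure of \eqref{adj_robust}. The key difficulty is that \eqref{adj_robust} is \emph{not} a standard triangular cascade: the $\varphi$-equation is forced by $\theta$ (through $\theta\chi_{\mathcal O_d}$) and the $\theta$-equation is forced by $\varphi$ (through $-\tfrac{1}{\ell^2}\varphi\chi_{\mathcal O}+\tfrac{1}{\gamma^2}\varphi$). The saving grace, which I expect to exploit, is the smallness of the coupling coefficients $1/\ell^2$ and $1/\gamma^2$ for $\gamma,\ell$ large, together with the fact that $\omega\cap\mathcal O_d\neq\emptyset$, so a single observation region serves both equations.

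First I would fix a nonempty open set $\omega_0$ with $\overline{\omega_0}\subset\omega\cap\mathcal O_d$ and recall the standard Fursikov--Imanuvilov Carleman weights: a weight $\alpha(x,t)=\dfrac{e^{2\lambda\|\eta\|_\infty}-e^{\lambda(\|\eta\|_\infty+\eta(x))}}{t(T-t)}$ and $\xi(x,t)=\dfrac{e^{\lambda(\|\eta\|_\infty+\eta(x))}}{t(T-t)}$, where $\eta\in C^2(\overline\Omega)$ has $\eta>0$ in $\Omega$, $\eta=0$ on $\partial\Omega$, and $|\nabla\eta|>0$ outside $\omega_0$. I would then apply the global Carleman estimate for the backward heat operator $-\partial_t-\Delta+a$ to $\varphi$ and for the forward heat operator $\partial_t-\Delta+c$ to $\theta$, each with observation in $\omega_0$, localized weights $e^{-2s\alpha}$, $s\xi$, etc. This yields, for $s,\lambda$ large,
\begin{equation*}
I(\varphi)+I(\theta)\leq C\iint_Q e^{-2s\alpha}\big(|\theta\chi_{\mathcal O_d}|^2 + |\tfrac{1}{\ell^2}\varphi\chi_{\mathcal O}+\tfrac{1}{\gamma^2}\varphi|^2\big)\,dxdt + C\iint_{\omega_0\times(0,T)} s^3\lambda^4\xi^3 e^{-2s\alpha}(|\varphi|^2+|\theta|^2)\,dxdt,
\end{equation*}
where $I(u)$ abbreviates the usual weighted sum $\iint_Q (s\xi)^{-1}e^{-2s\alpha}(|u_t|^2+|\Delta u|^2) + s\lambda^2\xi e^{-2s\alpha}|\nabla u|^2 + s^3\lambda^4\xi^3 e^{-2s\alpha}|u|^2$. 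The coupling terms on the right are now absorbed into the left: the term $\iint_Q e^{-2s\alpha}|\theta|^2$ (on $\mathcal O_d$, hence on $Q$ after bounding) is dominated by $I(\theta)$ provided $s$ is large, and the $\varphi$-coupling term carries the factor $(1/\ell^2+1/\gamma^2)^2$, which is absorbed by $I(\varphi)$ once $\gamma,\ell>$ some threshold — this is precisely where the hypothesis ``$\gamma,\ell$ large'' enters.

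Next I would handle the local term in $\theta$: since $\omega_0\subset\mathcal O_d$, a standard energy/cutoff argument (multiply the $\varphi$-equation by a cutoff times $s^{\text{(power)}}\xi^{\text{(power)}}e^{-2s\alpha}\varphi$ and integrate, using the $\theta$-equation to express $\theta\chi_{\mathcal O_d}$) lets me replace $\iint_{\omega_0\times(0,T)}(\cdots)|\theta|^2$ by $\iint_{\omega\times(0,T)}(\cdots)|\varphi|^2$ plus a small multiple of $I(\varphi)+I(\theta)$; alternatively, since here $\theta$ appears as a source in the $\varphi$-equation rather than the other way around, one estimates $\int\!\!\int_{\omega_0} |\theta|^2$ directly from the $\varphi$-Carleman terms because $\theta\chi_{\mathcal O_d} = -\varphi_t-\Delta\varphi+a\varphi$ on $\mathcal O_d\supset\omega_0$. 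This leaves a Carleman inequality with a single observation $\iint_{\omega\times(0,T)}s^3\lambda^4\xi^3 e^{-2s\alpha}|\varphi|^2$. Fixing $\lambda$ and $s$, bounding the weights below on $[T/4,3T/4]\times\Omega$ and using the standard dissipation estimate for $\varphi$ (backward heat) to propagate the bound on $\varphi(\cdot,T/4)$ down to $\varphi(\cdot,0)$, and absorbing the weight $e^{-2s\alpha}$ into a function $\rho(t)$ that blows up like $e^{C/(T-t)}$ as $t\to T$, I obtain exactly \eqref{ineq_robust}.

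The main obstacle is the non-triangular coupling: unlike the classical cascade systems treated in, e.g., the Stackelberg literature, here both unknowns feed back into each other, so one cannot simply Carleman-estimate one equation and substitute into the other. The resolution is the quantitative use of the smallness of $1/\ell^2,1/\gamma^2$ to close the loop, which is why the thresholds $\gamma_0,\ell_0$ appear in the statement; a secondary technical point is to verify that the threshold required here is compatible with (can be taken as the maximum of) the thresholds already needed in Proposition~\ref{propo_saddle} for the existence of the saddle point, so that all ``large $\gamma,\ell$'' hypotheses hold simultaneously.
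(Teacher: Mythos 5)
Your overall architecture is the same as the paper's (Carleman estimates for both equations of \eqref{adj_robust} with a common observation set $\omega'\subset\subset\omega\cap\mathcal O_d$, elimination of the local $\theta$-term, then energy estimates to pass to weights that do not degenerate at $t=0$ and recover $\|\varphi(0)\|_{L^2(\Omega)}$), but the step on which everything hinges — removing the local observation of $\theta$ — is precisely where your sketch fails. The variant you present as the cleaner option, namely estimating $\iint_{\omega_0\times(0,T)}s^3\lambda^4\xi^3e^{-2s\alpha}|\theta|^2$ by substituting $\theta=-\varphi_t-\Delta\varphi+a\varphi$ on $\omega_0\subset\mathcal O_d$ and bounding ``directly from the $\varphi$-Carleman terms'', does not work: the global Carleman inequality controls $|\varphi_t|^2$ and $|\Delta\varphi|^2$ only with the weight $(s\xi)^{-1}e^{-2s\alpha}$, while your substitution produces these quantities locally with the weight $(s\xi)^{3}\lambda^4e^{-2s\alpha}$, which is larger by a factor of order $(s\xi)^4\lambda^4$; nothing on the left-hand side can absorb that. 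Your first variant is the right idea (it is what the paper does, in the spirit of de Teresa and Gonz\'alez-Burgos--de Teresa), but as written it is garbled: one multiplies the $\varphi$-equation by $\zeta\,e^{-2s\alpha}s^3\lambda^4\xi^3\,\theta$ (a cutoff times the weight times $\theta$, not times $\varphi$), integrates by parts so that all derivatives fall on the weight, the cutoff and $\theta$, and only then uses the $\theta$-equation to replace $\theta_t-\Delta\theta$. In that computation the local $\nabla\theta$ term appears with the matching weight $s\lambda^2\xi e^{-2s\alpha}$ and is absorbed by the left-hand side, the $\ell^{-2}\varphi\chi_{\mathcal O}$ contribution disappears because $\zeta$ is supported in $\omega$ and $\omega\cap\mathcal O=\emptyset$, and a global term $C\gamma^{-2}\iint_Q e^{-2s\alpha}s^3\lambda^4\xi^3|\varphi|^2$ survives, absorbed thanks to $\gamma$ large (see \eqref{est_local_theta}--\eqref{car_con_gamma}). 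None of this bookkeeping is present in your plan, and it is not routine: it is the core of the proof.

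A second gap concerns the last step. You propose to ``propagate the bound down to $\varphi(0)$ with the standard dissipation estimate for the backward heat equation'', but $\varphi$ does not solve a homogeneous backward equation: its source is $\theta\chi_{\mathcal O_d}$, so the energy estimate on $(0,3T/4)$ brings in $\|\theta\|_{L^2(\Omega\times(0,3T/4))}$, which must then be controlled through the forward $\theta$-equation (zero data at $t=0$, source $(-\ell^{-2}\chi_{\mathcal O}+\gamma^{-2})\varphi$) and absorbed using $\gamma,\ell$ large — this is the paper's passage \eqref{est_f}--\eqref{esti_1_robust}. Together with the absorption of the $\gamma^{-2}$ term above, this is where the hypothesis ``$\gamma,\ell$ large'' genuinely enters; it is not needed, contrary to what you claim, to absorb the coupling source terms after the first application of the Carleman inequality, since those carry no positive powers of $s\xi$ and are handled simply by taking $s$ and $\lambda$ large.
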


We postpone the proof of this result until the end of this section. The main tool to prove Proposition \ref{propo:obs_ineq} is a well-known Carleman inequality for linear parabolic systems. 

First, let us introduce several weight functions that will be useful in the reminder of this section. We introduce a special function whose existence is guaranteed by the following result \cite[Lemma 1.1]{fursi}.
\begin{lemma}\label{eta_fursi}
Let $\mathcal{B}\subset\subset\Omega$ be a nonempty open subset. Then, there exists $\eta^0\in C^2(\overline{\Omega})$ such that
\begin{equation*}
\begin{cases}
\eta^0(x)>0 \quad \text{all } x\in \Omega, \qquad \eta^0|_{\partial\Omega}=0, \\
|\nabla \eta^0|>0 \quad \text{for all } x\in \overline{\Omega\backslash\mathcal{B}}.
\end{cases}
\end{equation*}
\end{lemma}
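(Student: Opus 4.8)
The plan is to deduce the statement from two standard facts of differential topology. First, I would build a smooth (a fortiori $C^2$) Morse function $\eta_1$ on $\overline\Omega$ that is positive in $\Omega$, vanishes on $\partial\Omega$, and has nonvanishing gradient on a whole collar neighbourhood of $\partial\Omega$; such a function automatically has only finitely many, interior, critical points. Second, using that $\Omega$ is connected (which we may assume), I would herd those finitely many critical points into the prescribed open set $\mathcal B$ by a diffeomorphism $\Phi$ of $\overline\Omega$ that is the identity near $\partial\Omega$, and then set $\eta^0:=\eta_1\circ\Phi^{-1}$. The only genuinely topological input, and the main obstacle, is the construction of $\Phi$.

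To construct $\eta_1$, pick $\varepsilon>0$ so small that the collar $V=\{x\in\overline\Omega:\dist(x,\partial\Omega)<\varepsilon\}$ is disjoint from $\overline{\mathcal B}$; this is possible since $\mathcal B\subset\subset\Omega$. Exploiting the tubular neighbourhood of the $C^2$ boundary, define $\eta_1$ on $V$ as a strictly increasing smooth function of $\dist(\cdot,\partial\Omega)$, so that $\eta_1=0$ on $\partial\Omega$, $\eta_1>0$ in $V\cap\Omega$, and $|\nabla\eta_1|>0$ throughout $V$. Extend $\eta_1$ to a strictly positive smooth function on $\overline\Omega\setminus V$ in an arbitrary way; all of its critical points then lie in the compact set $K:=\overline\Omega\setminus V\subset\Omega$. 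Finally, by a Morse--Sard / transversality argument, add to $\eta_1$ a generic small smooth perturbation supported in a compact subset of $\Omega$ that contains $K$ but is disjoint from a smaller collar $V_0\subset V$ of $\partial\Omega$; this keeps $\eta_1$ unchanged near $\partial\Omega$ while making every critical point nondegenerate. The resulting $\eta_1\in C^\infty(\overline\Omega)$ has nonvanishing gradient on $V_0$ and finitely many critical points $x_1,\dots,x_m\in\Omega\setminus V_0$.

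It remains to move $x_1,\dots,x_m$ into $\mathcal B$. Since $\Omega$ is connected, its group of diffeomorphisms isotopic to the identity acts $m$-transitively on $m$-tuples of distinct interior points when $N\ge2$ (and the one-dimensional case is elementary); choosing $m$ distinct target points in the nonempty open set $\mathcal B$, one obtains $\Phi:\overline\Omega\to\overline\Omega$, realised as the time-one flow of a smooth vector field compactly supported in $\Omega\setminus V_0$, with $\Phi\equiv\mathrm{id}$ on $V_0$ and $\Phi(x_i)\in\mathcal B$ for all $i$. Put $\eta^0:=\eta_1\circ\Phi^{-1}\in C^\infty(\overline\Omega)\subset C^2(\overline\Omega)$. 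Then $\eta^0>0$ in $\Omega$ and $\eta^0|_{\partial\Omega}=0$, and since $\nabla\eta^0(y)=(D\Phi^{-1}(y))^{\top}\nabla\eta_1(\Phi^{-1}(y))$ with $D\Phi^{-1}$ invertible, the zero set of $\nabla\eta^0$ equals $\{\Phi(x_1),\dots,\Phi(x_m)\}\subset\mathcal B$. In particular $|\nabla\eta^0|>0$ on $\overline{\Omega\setminus\mathcal B}$; note this set contains $\partial\Omega$, where $\Phi=\mathrm{id}$ gives $\eta^0=\eta_1$ and hence $|\nabla\eta^0|>0$ by the collar construction. As anticipated, the crux is the homogeneity step producing $\Phi$; the collar construction, the extension, and the Morse perturbation are routine.
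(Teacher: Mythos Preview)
The paper does not give its own proof of this lemma; it simply quotes it as \cite[Lemma~1.1]{fursi} (Fursikov--Imanuvilov). Your proposal supplies precisely the classical construction behind that citation: build a positive function vanishing on $\partial\Omega$ with nonzero gradient on a boundary collar, perturb it to a Morse function so that the interior critical set becomes finite, and then push those finitely many critical points into $\mathcal B$ by a compactly supported isotopy and pull back. This is correct and is, in essence, the Fursikov--Imanuvilov argument.

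Two small remarks. First, since the paper only assumes $\partial\Omega\in C^2$, the boundary distance is merely $C^2$ in the collar, so your claim $\eta_1\in C^\infty(\overline\Omega)$ overshoots; you actually obtain $\eta_1\in C^2(\overline\Omega)$, which is exactly what the lemma asks for, and the Morse perturbation and the composition with the smooth $\Phi^{-1}$ preserve $C^2$ regularity. Second, connectedness of $\Omega$ is genuinely needed (if $\mathcal B$ meets only one component, a positive function vanishing on the boundary must still have a critical point in every other component), and it is the standing hypothesis in \cite{fursi}; you are right to make it explicit.
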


For  $\lambda>0$ a parameter, we introduce the weight functions
\begin{equation}\label{weights}
\alpha(x,t)=\frac{e^{4\lambda\|\eta^0\|_\infty}-e^{\lambda\left(2\|\eta^0\|_\infty+\eta^0(x)\right)}}{t(T-t)}, \quad \xi(x,t)=\frac{e^{\lambda(2\|\eta^0\|_\infty+\eta^0(x))}}{t(T-t)}.
\end{equation}
For $m\in \R$ and a parameter $s>0$, we will use the following notation to abridge estimates:
\begin{equation}\label{ab_carleman}
\begin{gathered}
I_m(s,\lambda;z):=\iint_{Q}e^{-2s\alpha}(s\xi)^{m-2}\lambda^{m-1}|\nabla z|^2+\iint_{Q}e^{-2s\alpha}(s\xi)^{m}\lambda^{m+1}|z|^2, \\
I_{m,\mathcal{B}}(s,\lambda;z):= \iint_{\mathcal{B}\times(0,T)}e^{-2s\alpha}(s\xi)^{m}\lambda^{m+1}|z|^2.
\end{gathered}
\end{equation}

First, we state a Carleman estimate, due to \cite{ima_yama}, for solutions to the heat equation:
\begin{lemma}\label{car_basic}
Let $\mathcal{B}\subset\subset\Omega$ be a nonempty open subset. For any $m\in \mathbb{R}$, there exist positive constants $s_m$, $\lambda_m$, and $C_m$ such that, for any $s\geq s_m$, $\lambda\geq\lambda_m$, $F\in L^2(Q)$ and every $z^0\in L^2(\Omega)$, the solution $z$ to 
\begin{equation*}\label{sys_car_1}
\begin{cases}
z_t-\Delta z=F &\quad\text{in } Q, \\
z=0 &\quad\text{on } \Sigma, \\
z(x,0)=z^0(x) &\quad\text{in } \Omega,
\end{cases}
\end{equation*}  satisfies
\begin{equation}\label{chap1_ineq_car_basic}
I_m(s,\lambda;z)\leq C_m\left(I_{m,\mathcal{B}}(s,\lambda;z)+\iint_{Q}e^{-2s\alpha}(s\lambda\xi)^{m-3}|F|^2dxdt\right).
\end{equation}
Furthermore, $C_m$ only depends on $\omega$, $\mathcal{B}$ and $m$ and $s_m$ can be taken of the form $s_m=\sigma_m(T+T^2)$ where $\sigma_m$ only depends on $\omega$, $\mathcal{B}$ and $m$. 
\end{lemma}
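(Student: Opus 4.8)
The plan is to deduce the general estimate from the classical case $m=3$, which is precisely the Fursikov--Imanuvilov global Carleman inequality for the heat equation \cite{fursi}, by means of a power-weight change of unknown. The whole reduction rests on one algebraic observation: the weights of $I_m$ and $I_{m,\mathcal{B}}$ differ from those of $I_3$ and $I_{3,\mathcal{B}}$ exactly by the factor $(s\lambda\xi)^{m-3}$. Indeed, the gradient weight $(s\xi)^{m-2}\lambda^{m-1}$ equals $(s\lambda\xi)^{m-3}(s\xi)\lambda^2$ and the zeroth-order weight $(s\xi)^{m}\lambda^{m+1}$ equals $(s\lambda\xi)^{m-3}(s\xi)^3\lambda^4$. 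This suggests absorbing the extra factor into the unknown.

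Concretely, I would set $\mu:=(s\lambda\xi)^{(m-3)/2}$ and $w:=\mu z$, so that $\mu^2=(s\lambda\xi)^{m-3}$. Since $z$ vanishes on $\Sigma$ and $\mu$ is smooth and strictly positive inside $Q$, the function $w$ also vanishes on $\Sigma$, and a direct computation shows that it solves $w_t-\Delta w=\tilde F$ in $Q$, where
\[
\tilde F=\mu F+\mu^{-1}\mu_t\,w-2\mu^{-1}\nabla\mu\cdot\nabla w+\bigl(2\mu^{-2}|\nabla\mu|^2-\mu^{-1}\Delta\mu\bigr)w .
\]
The choice of $\mu$ turns $I_m(s,\lambda;z)$ into $I_3(s,\lambda;w)$: the zeroth-order term becomes exactly $\iint_Q e^{-2s\alpha}(s\xi)^3\lambda^4|w|^2$, while $\nabla z=\mu^{-1}\nabla w-\mu^{-2}(\nabla\mu)w$ produces the gradient term of $I_3(w)$ plus a remainder controlled, via $|\nabla\mu|\le C\lambda\mu$, by $\iint_Q e^{-2s\alpha}(s\xi)\lambda^4|w|^2$, which is dominated by the zeroth-order term of $I_3(w)$ once $s\xi\ge 1$. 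Hence $I_m(s,\lambda;z)\le C\,I_3(s,\lambda;w)$. Dually one checks $I_{3,\mathcal{B}}(s,\lambda;w)=I_{m,\mathcal{B}}(s,\lambda;z)$ and $\iint_Q e^{-2s\alpha}\mu^2|F|^2=\iint_Q e^{-2s\alpha}(s\lambda\xi)^{m-3}|F|^2$, which is the target source term.

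It then remains to apply the $m=3$ estimate to $w$ and absorb the commutator part of $\tilde F$. I would invoke the classical inequality
\[
I_3(s,\lambda;w)\le C\Bigl(I_{3,\mathcal{B}}(s,\lambda;w)+\iint_Q e^{-2s\alpha}|\tilde F|^2\,dxdt\Bigr),
\]
which is an a priori estimate valid for every laterally vanishing $w$ independently of its values at $t=0,T$, the endpoint contributions being annihilated by the infinite-order vanishing of $e^{-2s\alpha}$ there; truncating to $(\delta,T-\delta)$ and letting $\delta\to0$ makes this rigorous and simultaneously disposes of the blow-up of $\mu$ at the time endpoints. Splitting the source, the principal piece $\iint_Q e^{-2s\alpha}\mu^2|F|^2$ is the desired term, while the commutator pieces are handled with the standard weight inequalities $|\nabla\xi|\le C\lambda\xi$, $|\Delta\xi|\le C\lambda^2\xi$ and $|\xi_t|+|\alpha_t|\le CT\xi^2$, which give $|\mu^{-1}\mu_t|\le CT\xi$, $|\mu^{-1}\nabla\mu|\le C\lambda$ and $\bigl|2\mu^{-2}|\nabla\mu|^2-\mu^{-1}\Delta\mu\bigr|\le C\lambda^2$. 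Squaring and integrating, the two $w$-contributions are bounded by $C\iint_Q e^{-2s\alpha}(T^2\xi^2+\lambda^4)|w|^2$ and the $\nabla w$-contribution by $C\iint_Q e^{-2s\alpha}\lambda^2|\nabla w|^2$; each carries a strictly negative power of $s\xi$ relative to, respectively, the zeroth-order and the gradient term of $I_3(w)$. Thus for $\lambda\ge\lambda_m$ and $s\ge s_m=\sigma_m(T+T^2)$ they are all $\le\tfrac12 I_3(s,\lambda;w)$ and may be absorbed on the left. Translating back through $I_m(z)\le C\,I_3(w)$, the identity $I_{3,\mathcal{B}}(w)=I_{m,\mathcal{B}}(z)$ and the source identity yields \eqref{chap1_ineq_car_basic} with $C_m$, $\lambda_m$, $\sigma_m$ depending only on $\Omega$, $\mathcal{B}$ and $m$ through $\eta^0$. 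The main obstacle is exactly this bookkeeping: one must verify that every error produced by differentiating the power weight $\mu$ gains a strictly negative power of $s\xi$ or of $\lambda$ compared with the dominant terms of $I_3(w)$, so that the absorption threshold can be reached uniformly; the $(T+T^2)$ form of $s_m$ arises from matching the $T\xi^2$ growth of the time-derivative weights against the $s^3\xi^3$ gain of the zeroth-order term.
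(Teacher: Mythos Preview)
Your reduction is sound: the substitution $w=(s\lambda\xi)^{(m-3)/2}z$ together with the commutator bounds $|\mu^{-1}\mu_t|\le CT\xi$, $|\mu^{-1}\nabla\mu|\le C\lambda$, $|\mu^{-1}\Delta\mu|\le C\lambda^2$ does convert $I_3(s,\lambda;w)$ into $I_m(s,\lambda;z)$ up to absorbable remainders, the identity $I_{3,\mathcal B}(w)=I_{m,\mathcal B}(z)$ is exact, and the truncation to $(\delta,T-\delta)$ legitimately handles the possible blow-up of $\mu$ at the time endpoints. The bookkeeping you outline for the absorption (each error carries a strictly negative power of $s\xi$ relative to the corresponding term of $I_3(w)$, and the $T^2\xi^2$ contribution is what forces $s_m=\sigma_m(T+T^2)$) is correct.

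Note, however, that the paper does \emph{not} prove this lemma: it is merely stated as a known Carleman estimate, attributed to Imanuvilov and Yamamoto \cite{ima_yama}, and used as a black box in Propositions~\ref{car_inicial} and \ref{car_modificado_robust}. So there is nothing to compare against; you have supplied a proof where the paper gives only a citation. Your argument is in fact the standard way one derives the general-$m$ estimate from the classical $m=3$ Fursikov--Imanuvilov inequality, and is essentially the method behind the cited reference.
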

\begin{remark}
Note that by changing $t$ for $T-t$, Lemma \ref{car_basic} remains valid for linear backward in time systems. Therefore, we can apply it interchangeably in what follows. 
\end{remark}

The observability inequality \eqref{ineq_robust} is consequence of a global Carleman inequality and some energy estimates. We present below a Carleman inequality for the solutions to system \eqref{adj_robust}:
\begin{proposition}\label{car_inicial}
Under assumptions of Proposition  \ref{propo:obs_ineq}. There exist positive constants constant $C$ and $\sigma_2$ such that the solution $(\varphi,\theta)$ to \eqref{adj_robust} satisfies
\begin{equation}\label{car_sinlocales_robust}
\begin{split}
I_3(s,\lambda;\varphi)+I_3(s,\lambda;\theta) \leq C\iint_{\omega\times(0,T)}e^{-2s\alpha}s^7\lambda^8\xi^7|\varphi|^2.
\end{split}
\end{equation}
for any $s\geq s_2=\sigma_2(T+T^2+T^2(\|a\|_\infty^{2/3}+\|c\|_\infty^{2/3}+\|a-c\|_\infty^{1/2}))$, any $\lambda \geq C$ and every $\varphi^T\in L^2(\Omega)$. 
\end{proposition}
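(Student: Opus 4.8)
The plan is to establish the coupled Carleman estimate \eqref{car_sinlocales_robust} by applying the scalar Carleman inequality of Lemma \ref{car_basic} to each of the two equations of \eqref{adj_robust} separately and then absorbing the cross terms, exploiting the fact that $\omega\cap\mathcal O_d\neq\emptyset$. Concretely, let $\mathcal B\subset\subset\Omega$ be a fixed open set with $\mathcal B\subset\subset\omega\cap\mathcal O_d$; all the weight functions $\alpha,\xi$ and the quantities $I_m$, $I_{m,\mathcal B}$ are built from the function $\eta^0$ associated with this $\mathcal B$ via Lemma \ref{eta_fursi}. First I would apply Lemma \ref{car_basic} (in its backward-in-time form, via the Remark) to the $\varphi$-equation with right-hand side $F=\theta\chi_{\mathcal O_d}-a\varphi$ and to the $\theta$-equation with $F=-\frac{1}{\ell^2}\varphi\chi_{\mathcal O}+\frac{1}{\gamma^2}\varphi-c\theta$, choosing the index $m=3$ for both. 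This yields
\begin{align*}
I_3(s,\lambda;\varphi)&\leq C\Big(I_{3,\mathcal B}(s,\lambda;\varphi)+\iint_Q e^{-2s\alpha}(s\lambda\xi)^{0}\big(|\theta|^2\chi_{\mathcal O_d}+\|a\|_\infty^2|\varphi|^2\big)\Big),\\
I_3(s,\lambda;\theta)&\leq C\Big(I_{3,\mathcal B}(s,\lambda;\theta)+\iint_Q e^{-2s\alpha}(s\lambda\xi)^{0}\big(|\varphi|^2+\|c\|_\infty^2|\theta|^2\big)\Big).
\end{align*}

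Next I would absorb the lower-order global terms on the right into the left-hand sides. The terms $\iint_Q e^{-2s\alpha}\|a\|_\infty^2|\varphi|^2$ and $\iint_Q e^{-2s\alpha}|\varphi|^2$ are bounded by $C\|a\|_\infty^2 s^{-3}\lambda^{-4}$ (resp.\ $Cs^{-3}\lambda^{-4}$) times the $|z|^2$-part of $I_3(s,\lambda;\varphi)$ since $(s\xi)^{3}\lambda^{4}\geq (\text{const})\,s^3\lambda^4 T^{-6}$ on $Q$; choosing $s\geq \sigma(T^2\|a\|_\infty^{2/3})$ and $\lambda\geq C$ makes the constant $<1/2$, and similarly for the $\|c\|_\infty^2|\theta|^2$ term. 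This is exactly why the threshold for $s$ in the statement contains $T^2\|a\|_\infty^{2/3}$ and $T^2\|c\|_\infty^{2/3}$. After absorption we are left with
\[
I_3(s,\lambda;\varphi)+I_3(s,\lambda;\theta)\leq C\Big(I_{3,\mathcal B}(s,\lambda;\varphi)+I_{3,\mathcal B}(s,\lambda;\theta)+\iint_{\mathcal O_d\times(0,T)}e^{-2s\alpha}|\theta|^2\Big).
\]
The term $\iint_{\mathcal O_d\times(0,T)}e^{-2s\alpha}|\theta|^2$ (with weight power $0$ in $s\xi$) is likewise dominated by a small multiple of the $|\theta|^2$-part of $I_3(s,\lambda;\theta)$ for $s,\lambda$ large, so it too is absorbed.

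The remaining and genuinely delicate point — the one I expect to be the main obstacle — is the estimate of the local term $I_{3,\mathcal B}(s,\lambda;\theta)$ involving $\theta$, which must be converted into a local term in $\varphi$ alone on $\omega$, since $\theta$ is not directly observed. Here one uses that $\mathcal B\subset\subset\omega\cap\mathcal O_d$ and the first equation of \eqref{adj_robust}: on a slightly larger set $\mathcal B'$ with $\mathcal B\subset\subset\mathcal B'\subset\subset\omega\cap\mathcal O_d$, one has $\theta\chi_{\mathcal O_d}=-\varphi_t-\Delta\varphi+a\varphi$, and introducing a cut-off $\zeta\in C_c^\infty(\mathcal B')$ with $\zeta\equiv 1$ on $\mathcal B$, one multiplies this identity by $e^{-2s\alpha}(s\xi)^{m}\lambda^{m+1}\zeta\,\theta$ (with an appropriate power $m$), integrates by parts in $x$ and $t$, and uses Young's inequality. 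The integrations by parts move derivatives onto $\varphi$ and onto the weight; the terms with $\nabla\theta$ and $\theta_t$ are controlled by a small multiple of $I_3(s,\lambda;\theta)$ (absorbed on the left) at the cost of high powers of $s$ and $\lambda$ falling on the $\varphi$-side, which is why the power $s^7\lambda^8\xi^7$ appears on the right of \eqref{car_sinlocales_robust}. One must be careful to estimate the time-derivative of the weight, $|\partial_t(e^{-2s\alpha}(s\xi)^m)|\leq C s^{m+1}\xi^{m+1+1/1}e^{-2s\alpha}T$ type bounds, which again feed into the $T$-dependence of $s_2$; the term $T^2\|a-c\|_\infty^{1/2}$ in the threshold comes from handling the coupling in this step. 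Finally, collecting everything and relabelling constants gives \eqref{car_sinlocales_robust} for all $s\geq s_2$ with $s_2$ of the stated form, all $\lambda\geq C$, and every $\varphi^T\in L^2(\Omega)$; the passage from this Carleman estimate to the observability inequality \eqref{ineq_robust} (adding the energy estimate for $\varphi(0)$ and the $\rho^{-2}$-weighted estimate for $\theta$) is deferred, as announced, to the end of the section.
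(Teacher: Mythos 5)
Your overall strategy is the same as the paper's: apply Lemma \ref{car_basic} with $m=3$ to each equation of \eqref{adj_robust}, observe in a set compactly contained in $\omega\cap\mathcal O_d$, absorb the global zero-order terms for $s\geq CT^2(\|a\|_\infty^{2/3}+\|c\|_\infty^{2/3})$ and $\lambda$ large, and then eliminate the local term in $\theta$ by using the first equation of \eqref{adj_robust} through a cutoff supported in $\omega\cap\mathcal O_d$. However, the decisive step as you describe it does not close. Writing $u=e^{-2s\alpha}s^3\lambda^4\xi^3$, when you multiply $\theta\chi_{\mathcal O_d}=-\varphi_t-\Delta\varphi+a\varphi$ by $u\zeta\theta$ and integrate by parts, the time integration by parts produces, besides the term with $\partial_t u$, the term $\iint_Q u\zeta\,\varphi\,\theta_t$. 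You claim that the terms with $\nabla\theta$ and $\theta_t$ are absorbed by a small multiple of $I_3(s,\lambda;\theta)$: this is correct for $\nabla\theta$ (the gradient part of $I_3$ is global), but $I_3(s,\lambda;\theta)$ contains no time derivative, so the $\theta_t$ term cannot be absorbed this way, and integrating by parts in time once more only trades it for an equally uncontrolled $\varphi_t$ term. As written, the key step fails.

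The missing idea --- and what the paper does --- is to keep $\theta_t-\Delta\theta$ together after the integrations by parts and substitute the second equation of \eqref{adj_robust}, $\theta_t-\Delta\theta=-c\theta-\tfrac{1}{\ell^2}\varphi\chi_{\mathcal O}+\tfrac{1}{\gamma^2}\varphi$. Since $\operatorname{supp}\zeta\subset\omega$ and $\omega\cap\mathcal O=\emptyset$, the $\tfrac{1}{\ell^2}$ term drops out, and what remains is precisely the coupling term $\iint_Q(a-c)\varphi\theta\,u\zeta$ --- the actual source of the $T^2\|a-c\|_\infty^{1/2}$ contribution to $s_2$, since it is absorbed into the local $s^7\lambda^8\xi^7$ term once $s\geq CT^2\|a-c\|_\infty^{1/2}$ --- together with the term $\tfrac{1}{\gamma^2}\iint_Q u\zeta|\varphi|^2$, whose absorption (compared in the paper with the left-hand side, which carries the same powers $s^3\lambda^4\xi^3$) is where the hypothesis that $\gamma$ is large enters. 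Your outline produces neither of these two terms, so it can explain neither the $\|a-c\|_\infty^{1/2}$ in the threshold (which you attribute only vaguely to ``the coupling'') nor the role of $\gamma$. A minor additional point: if, as in the paper, the cutoff contributions containing $\theta$ are to be absorbed into $\iint_Q u\zeta|\theta|^2$, the cutoff must satisfy $\nabla\zeta/\zeta^{1/2},\ \Delta\zeta/\zeta^{1/2}\in L^\infty$ (e.g.\ $\zeta=\tilde\zeta^4$); with a generic $\zeta\in C_c^\infty$ you would instead have to absorb them into the global $I_3(s,\lambda;\theta)$ on the left, which should be stated.
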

\begin{proof}
Hereinafter $C$ will denote a generic positive constant that may change from line to line. We start by applying Carleman inequality \eqref{chap1_ineq_car_basic} to each equation in system \eqref{adj_robust} with $m=3$, $\mathcal{B}=\omega^\prime\subset\subset \omega_0:=\omega\cap\mathcal O_d$ and add them up, hence
\begin{equation*}\begin{split}
I_3&(s,\lambda;\varphi)+I_3(s,\lambda;\theta) \\
&\leq C\left(I_{3,\omega^\prime}(s,\lambda;\varphi)+I_{3,\omega^\prime}(s,\lambda;\theta)+\iint_Q e^{-2s\alpha}|\theta\chi_{\mathcal O_d}|^2dxdt\right. \\
&\qquad+\iint_Qe^{-2s\alpha}|-\tfrac{1}{\ell^2}\varphi\chi_{\mathcal O}+\tfrac{1}{\gamma^2}\varphi|^2 \left.+\iint_Qe^{-2s\alpha}\|a\|_\infty^2|\varphi|^2+\iint_Qe^{-2s\alpha}\|c\|_\infty^2|\theta|^2\right)
\end{split}
\end{equation*}
Taking the parameter $s$ large enough we can absorb some of the lower order terms in the right-hand side of the above expression. More precisely,  there exists a constant $\sigma_1>0$, such that
\begin{equation*}\begin{split}
I_3&(s,\lambda;\varphi)+I_3(s,\lambda;\theta) \\
&\leq C\left(I_{3,\omega^\prime}(s,\lambda;\varphi)+I_{3,\omega^\prime}(s,\lambda;\theta)+\iint_Q e^{-2s\alpha}|\theta\chi_{\mathcal O_d}|^2 +\iint_Qe^{-2s\alpha}|-\tfrac{1}{\ell^2}\varphi\chi_{\mathcal O}+\tfrac{1}{\gamma^2}\varphi|^2\right)
\end{split}
\end{equation*}
is valid for every 
\begin{equation}\label{s_1}
s\geq s_1=\sigma_1(T+T^2+T^2(\|a\|_\infty^{2/3}+\|c\|_\infty^{2/3})).
\end{equation}
Then, taking the parameter $\lambda$ large enough we get
\begin{equation}\label{car_conlocales_robust}
\begin{split}
I_3&(s,\lambda;\varphi)+I_3(s,\lambda;\theta) \leq C\left(I_{3,\omega^\prime}(s,\lambda;\varphi)+I_{3,\omega^\prime}(s,\lambda;\theta)\right).
\end{split}
\end{equation}
for every $s\geq s_1$ and $\lambda\geq C$.

The next step is to eliminate the local term on the right hand side corresponding to $\theta$. We will reason out as in \cite{deteresa2000} and \cite{luz_manuel}. We consider a function $\zeta\in C_0^\infty(\mathbb{R}^N)$ verifying: 
\begin{gather} \label{zeta_def}
0\leq \zeta\leq 1 \text{ in } \Omega, \quad \zeta\equiv 1 \quad\text{in } \omega^\prime, \quad \textrm{supp}\, \zeta\subset \omega_0, \\ \label{zeta_prop}
\frac{\Delta \zeta}{\zeta^{1/2}}\in L^\infty(\Omega), \quad \frac{\nabla \zeta}{\zeta^{1/2}}\in L^\infty(\Omega)^N
\end{gather}
Such function exists. It is sufficient to take $\zeta=\tilde \zeta^4$ with $\tilde \zeta\in C_0^\infty(\Omega)$ veryfing \eqref{zeta_def}. 

Let $s\geq s_1$ with $s_1$ given in \eqref{s_1}. We define $u=e^{-2s\alpha}s^3\lambda^4\xi^3$. Multiplying the equation satisfied by $\varphi$ in \eqref{adj_robust} by $u\zeta\theta$, integrating by parts over $Q$ and taking into account that $u(x, 0)$ vanishes in $\Omega$ we obtain
\begin{equation}\label{est_local_theta}
\begin{split}
\iint_{Q}u\zeta|\theta|^2\chi_{\mathcal O_d}=&\iint_{Q}(a-c)\varphi\theta u\zeta+\iint_Q \varphi\theta \zeta\partial_t u-\iint_Q \varphi\theta \Delta (u\zeta)  \\
&-2\iint_Q \nabla(u\zeta)\cdot \nabla\theta\,\varphi+\frac{1}{\gamma^2}\iint_Q|\varphi|^2u\zeta \\
:=&I_1+I_2+I_3+I_4+I_5.
\end{split}
\end{equation}
Let us estimate each $I_i$, $1\leq i \leq 4$, we keep the last term as it is. From H\"older and Young inequalities, we readily obtain
\begin{equation}\label{est_I1}
I_1=\iint_{Q}(a-c)\varphi\theta u\zeta \leq \delta_1 \iint_{Q}u\zeta|\theta|^2+\frac{1}{4\delta_1}\|a-c\|_\infty^2\iint_{Q}u\zeta|\varphi|^2.
\end{equation}
for any $\delta_1>0$. Observe that 
\begin{align*}
|\partial _t u|&\leq 3s^3\lambda^4\xi^2\xi_te^{-2s\alpha}+2s^3\lambda^4\xi^3e^{-2s\alpha}s\alpha_t, \\
&\leq CTs^3\lambda^4\xi^4e^{-2s\alpha}+CTs^4\lambda^4\xi^5e^{-2s\alpha}, \\
&\leq CTs^4\lambda^4\xi^5e^{-2s\alpha},
\end{align*}
where we have used that $\alpha_t\leq CT\xi^2$. Then, we can estimate
\begin{align} \nonumber
|I_2|&\leq \iint_Q |\varphi|  |\theta|  |\partial_t u| \zeta\leq CT\iint_Q s^4\lambda^4\xi^5e^{-2s\alpha}|\varphi| |\theta|\zeta \\ \nonumber
&\leq \delta _2 \iint_Q u\zeta |\theta|^2+\frac{CT^2}{\delta_2}\iint_Q s^5\lambda^4\xi^7 e^{-2s\alpha}|\varphi|^2\zeta \\ \label{est_I2}
&\leq \delta _2 \iint_Q u\zeta |\theta|^2+\frac{C}{\delta_2}\iint_Q s^7\lambda^4\xi^7 e^{-2s\alpha}|\varphi|^2\zeta
\end{align} 
for any $\delta_2>0$, where we have used in the last line that $s\geq \sigma_1T$. 

In order to estimate $I_3$, we compute first
\begin{equation}\label{delta_local}
\Delta \left(e^{-2s\alpha}s^3\lambda^4\xi^3\zeta\right)=\Delta\left(e^{-2s\alpha}s^3\lambda^4\xi^3\right)\zeta+\Delta \zeta e^{-2s\alpha}s^3\lambda^4\xi^3+2\nabla(e^{-2s\alpha}s^3\lambda^4\xi^3)\cdot\nabla \zeta
\end{equation}
and
\begin{align} \label{delta_u}
&|\Delta\left(e^{-2s\alpha}s^3\lambda^4\xi^3\right)|\leq Ce^{-2s\alpha}s^5\lambda^6\xi^5, \\ \label{grad_u}
&|\nabla\left(e^{-2s\alpha}s^3\lambda^4\xi^3\right)|\leq Ce^{-2s\alpha}s^4\lambda^5\xi^4,
\end{align}
where the above inequalities follow from the fact that 
\[
\partial_i\alpha=-\partial_i\xi=-C\lambda\partial_i \eta^0 \xi \leq C\lambda\xi.
\]
Then, from \eqref{delta_local}--\eqref{grad_u} and using \eqref{zeta_prop}, we obtain
\begin{align*}
|I_3|\leq& C\iint_Q |\varphi| |\theta|e^{-2s\alpha}s^5\lambda^6\xi^5\zeta+C\iint_Q|\varphi| |\theta| e^{-2s\alpha} s^3\lambda^4\xi^3\zeta^{1/2} \\
&+C\iint_Q |\varphi| |\theta|e^{-2s\alpha}s^4\lambda^5\xi^4\zeta^{1/2}.
\end{align*}
Using H\"older and Young inequalities and \eqref{zeta_def} yield
\begin{align*}
|I_3| \leq& \delta_3\iint_Q u\zeta|\theta|^2+\frac{C}{\delta_3}\iint_{\omega_0\times(0,T)}e^{-2s\alpha}s^7\lambda^8\xi^7|\varphi|^2 \\
&+ \frac{C}{\delta_3}\iint_{\omega_0\times(0,T)}e^{-2s\alpha}s^3\lambda^4\xi^3|\varphi|^2+ \frac{C}{\delta_3}\iint_{\omega_0\times(0,T)}e^{-2s\alpha}s^5\lambda^6\xi^5|\varphi|^2
\end{align*}
for some $\delta_3>0$. Note that $\xi^{-1}\leq CT^2/4$, then, for any $\nu,\mu\in\mathbb N$ with $\nu\geq \mu$ we have
\begin{equation}\label{sxi_mu}
(s\xi)^\mu=s^\mu\xi^{\nu}\xi^{\mu-\nu} \leq Cs^\mu\xi^\nu(T^2/4)^{-(\mu-\nu)}\leq Cs^\nu\xi^\nu,
\end{equation}
since $s\geq CT^2$. Hence,
\begin{equation}\label{est_I3}
|I_3| \leq \delta_3\iint_Q u\zeta|\theta|^2+\frac{C}{\delta_3}\iint_{\omega_0\times(0,T)}e^{-2s\alpha}s^7\lambda^8\xi^7|\varphi|^2.
\end{equation}
Using \eqref{zeta_prop}, \eqref{grad_u} and \eqref{sxi_mu}, we estimate $I_4$ as
\begin{align}\nonumber
|I_4|&\leq C\iint_Qe^{-2s\alpha}\left(s^3\lambda^4\xi^3|\nabla \theta| |\varphi|\zeta^{1/2}+s^4\lambda^5\xi^4|\nabla \theta| |\varphi| \zeta\right)  \\ \label{est_I4}
&\leq \varepsilon\iint_Qe^{-2s\alpha}s\lambda^2\xi|\nabla \theta|^2+\frac{C}{\varepsilon}\iint_{\omega_0\times(0,T)}e^{-2s\alpha}s^7\lambda^8\xi^7|\varphi|^2
\end{align}
for $\varepsilon>0$. 

Setting $\delta_i=1/6$, $1\leq i\leq 3$, and $\varepsilon=\frac{1}{4C}$ with $C$ the constant in \eqref{car_conlocales_robust}, and upon substituting estimates \eqref{est_I1}-\eqref{est_I2} and \eqref{est_I3}--\eqref{est_I4} in \eqref{est_local_theta}, we obtain
\begin{equation}\label{est_final_local}
\begin{split}
\iint_{Q}e^{-2s\alpha}s^3\lambda^4\xi^3|\theta|^2\chi_{\mathcal O_d} \leq& C\iint_{\omega_0\times(0,T)}e^{-2s\alpha}\left[\|a-c\|_\infty^2s^3\lambda^4\xi^3|\varphi|^2+s^7\lambda^8\xi^7|\varphi|^2\right] \\
&+ \frac{1}{2C}\iint_Qe^{-2s\alpha}s\lambda^2\xi|\nabla \theta|^2+\frac{1}{\gamma^2}\iint_Qe^{-2s\alpha}s^3\lambda^4\xi^3|\varphi|^2.
\end{split}
\end{equation}
Thus, in view of \eqref{car_conlocales_robust}--\eqref{zeta_def} and \eqref{est_final_local}, we obtain 
\begin{align*}
\iint_Qe^{-2s\alpha}&\left(s\lambda^2\xi|\nabla\varphi|^2+s^3\lambda^4\xi^3|\varphi|^2\right)+\iint_Qe^{-2s\alpha}\left(s\lambda^2\xi|\nabla\theta|^2+s^3\lambda^4\xi^3|\theta|^2\right) \\
\leq& C\|a-c\|_\infty^2\iint_{\omega_0\times(0,T)}e^{-2s\alpha}s^3\lambda^4\xi^3|\varphi|^2+C\iint_{\omega_0\times(0,T)}e^{-2s\alpha}s^7\lambda^8\xi^7|\varphi|^2 \\
&+\frac{C}{\gamma^2}\iint_Qe^{-2s\alpha}s^3\lambda^4\xi^3|\varphi|^2.
\end{align*}
Taking $s\geq CT^2\|a-c\|_\infty^{1/2}$, the above inequality now reads
\begin{equation}\label{car_con_gamma}
\begin{split}
\iint_Qe^{-2s\alpha}&\left(s\lambda^2\xi|\nabla\varphi|^2+s^3\lambda^4\xi^3|\varphi|^2\right)+\iint_Qe^{-2s\alpha}\left(s\lambda^2\xi|\nabla\theta|^2+s^3\lambda^4\xi^3|\theta|^2\right) \\
\leq&C\iint_{\omega_0\times(0,T)}e^{-2s\alpha}s^7\lambda^8\xi^7|\varphi|^2 +\frac{C}{\gamma^2}\iint_Qe^{-2s\alpha}s^3\lambda^4\xi^3|\varphi|^2.
\end{split}
\end{equation}
for every $s\geq s_2$ with 
\begin{equation}\label{s_2}
s_2=\sigma_2(T+T^2+T^2(\|a\|_\infty^{2/3}+\|c\|_\infty^{2/3}+\|a-c\|_\infty^{1/2})).
\end{equation}
for some $\sigma_2$ only depending on $\Omega$, $\omega$ and $\mathcal O_d$. 

Observe that the last term in \eqref{car_con_gamma} has the same power of $s$, $\lambda$ and $\xi$ as in the corresponding term on the left-hand side. Thus, provided $\gamma$ is large enough, we can absorb it into the right-hand side. Finally, since $\textnormal{supp }\omega_0\subset \omega$, we obtain the desired inequality \eqref{car_sinlocales_robust}. Therefore the proof is complete. 
\end{proof}
Now, we are going to improve inequality \eqref{car_sinlocales_robust} in the sense that the weight functions do not vanish at $t=0$. First, let us consider the function 
\begin{equation*}\label{l_t}
l(t)=\begin{cases}
T^2/4 &\quad\text{for}\quad 0\leq t\leq T/2, \\
t(T-t) &\quad\text{for}\quad T/2\leq t\leq T,
\end{cases}
\end{equation*}
and the functions
\begin{equation*}\label{weights_rec}
\begin{split}
&\beta(x,t)=\frac{e^{4\lambda\|\eta^0\|_\infty}-e^{\lambda(2\|\eta^0\|_\infty+\eta^0(x))}}{l(t)}, \quad \phi(x,t)=\frac{e^{\lambda(2\|\eta^0\|_\infty+\eta^0(x))}}{l(t)}, \\
&\beta^*(t)=\max_{x\in\overline \Omega}\beta(x,t), \quad \phi^*(t)=\min_{x\in\overline \Omega}\phi(x,t).
\end{split}
\end{equation*}
With these definitions, we have the following:
\begin{proposition}\label{car_modificado_robust}
Let $s$ and $\lambda$  as in Proposition \ref{car_inicial} and $\ell, \gamma$ be large enough.Then there exists a positive constant $C$ depending on $\Omega$, $\omega$, $\omega_d$, $s$, $\lambda$, $\|a\|_\infty$, $\|c\|_\infty$ and $T$ such that 
\begin{equation}\label{car_mod_2_rob}
\begin{split}
\|\varphi(0)\|^2_{L^2(\Omega)}+\iint_Q e^{-2s\beta^*}(\phi^*)^{3}|\varphi|^2 dxdt&+\iint_Q e^{-2s\beta^*}(\phi^*)^{3}|\theta|^2dxdt \\
&  \leq C\iint_{\omega\times(0,T)}e^{-2s\beta}\phi^{7}|\varphi|^2dxdt ,
\end{split}
\end{equation}
for any $\varphi^T\in L^2(\Omega)$, where $(\varphi,\theta)$ is the associated solution to \eqref{adj_robust}.
\end{proposition}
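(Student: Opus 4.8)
The plan is to pass from the Carleman estimate \eqref{car_sinlocales_robust} of Proposition~\ref{car_inicial}, whose weights degenerate both at $t=0$ and $t=T$, to the estimate \eqref{car_mod_2_rob} with weights $e^{-2s\beta^*},\phi^*$ that are bounded and bounded away from zero on $[0,T/2]$ and coincide with the original weights on $[T/2,T]$. This is a by-now standard ``first energy estimate'' argument (as in \cite{fursi,fc_zuazua}), which I would carry out in three steps: a local-in-time energy estimate for $\varphi$ on $[0,T/2]$, a global-in-time energy bound, and then the combination of these with \eqref{car_sinlocales_robust}.

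First I would fix the parameters $s,\lambda$ at the admissible values of Proposition~\ref{car_inicial}, so that from now on all constants may depend on $s,\lambda$. Since on $[T/2,T]$ the functions $\beta,\phi$ agree with $\alpha,\xi$ (up to the fixed multiplicative powers), the contribution of $(0,T/2)^c$ to the left side of \eqref{car_mod_2_rob} is already controlled by $I_3(s,\lambda;\varphi)+I_3(s,\lambda;\theta)$, hence by the right side of \eqref{car_sinlocales_robust}, and \eqref{car_sinlocales_robust}'s right side is in turn bounded by the right side of \eqref{car_mod_2_rob} because on $\omega\times(T/2,T)$ one has $e^{-2s\alpha}s^7\lambda^8\xi^7\le C e^{-2s\beta}\phi^7$. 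So the real work is to estimate the left side over $\Omega\times(0,T/2)$, in particular the term $\|\varphi(0)\|_{L^2(\Omega)}^2$.

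Next I would derive the energy estimate. Because $-\varphi_t-\Delta\varphi+a\varphi=\theta\chi_{\mathcal O_d}$ is backward parabolic, multiplying by $\varphi$ and integrating over $\Omega\times(t,T/2)$ with Gronwall's inequality gives, for $t\in[0,T/2]$,
\begin{equation*}
\|\varphi(t)\|_{L^2(\Omega)}^2 + \int_t^{T/2}\!\!\int_\Omega |\nabla\varphi|^2 \le C\left(\|\varphi(T/2)\|_{L^2(\Omega)}^2 + \iint_Q |\theta|^2\right),
\end{equation*}
so in particular $\|\varphi(0)\|_{L^2(\Omega)}^2 \le C(\|\varphi(T/2)\|_{L^2(\Omega)}^2 + \iint_Q|\theta|^2)$. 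For $\theta$, which solves a \emph{forward} parabolic equation with zero initial data and right-hand side $-\tfrac1{\ell^2}\varphi\chi_{\mathcal O}+\tfrac1{\gamma^2}\varphi$, the same kind of estimate gives control of $\iint_Q|\theta|^2$ by $\iint_Q|\varphi|^2$. Both of the quantities $\|\varphi(T/2)\|_{L^2(\Omega)}^2$ and $\iint_{\Omega\times(T/4,3T/4)}|\varphi|^2$ are, up to a fixed constant, bounded by $\iint_Q e^{-2s\alpha}(s\xi)^3\lambda^4|\varphi|^2 = I$-type terms on a time interval where the weight $e^{-2s\alpha}(s\xi)^3$ is bounded below by a positive constant; hence they are absorbed into $I_3(s,\lambda;\varphi)$, and thus into the right side of \eqref{car_sinlocales_robust}. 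Since on $\Omega\times(0,T/2)$ the weights $e^{-2s\beta^*}(\phi^*)^3$ are bounded, multiplying the energy estimates by these (constant, on that interval) weights and integrating in $t$ over $(0,T/2)$ produces exactly the missing left-hand pieces of \eqref{car_mod_2_rob}, controlled by $\iint_Q|\theta|^2 + \|\varphi(T/2)\|^2 + \iint_{Q}|\varphi|^2\chi_{(T/4,3T/4)}$, all already absorbed. Putting the three steps together yields \eqref{car_mod_2_rob}.

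The main obstacle, and the step requiring care, is the bookkeeping that shows every term generated by the energy estimates on $(0,T/2)$ is genuinely dominated by $I_3(s,\lambda;\varphi)+I_3(s,\lambda;\theta)$ evaluated on the \emph{central} time region where the original Carleman weights do not degenerate — this is where one uses that $\beta,\phi$ were frozen to constants on $[0,T/2]$ precisely so that $e^{-2s\alpha}(s\xi)^3$ stays comparable to $1$ on, say, $[T/4,3T/4]$. A secondary point is that the $\tfrac1{\gamma^2}\varphi$ and $\tfrac1{\ell^2}\varphi$ terms in the $\theta$-equation must be handled so that the resulting constant stays uniform for $\gamma,\ell$ large, which is immediate since those coefficients only shrink; the hypothesis that $\gamma,\ell$ be large is inherited from Proposition~\ref{car_inicial} and is not needed again here.
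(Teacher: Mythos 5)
Your overall strategy (keep the Carleman estimate \eqref{car_sinlocales_robust} on $(T/2,T)$, where $\beta=\alpha$ and $\phi=\xi$, and run energy estimates on $(0,T/2)$ to recover $\|\varphi(0)\|^2_{L^2(\Omega)}$ and the non-degenerate weights) is the same as the paper's, but the way you close the energy step has a genuine gap. The backward energy estimate for $\varphi$ leaves you with a $\theta$-term, which you propose to bound by $\iint_Q|\theta|^2\leq C\iint_Q|\varphi|^2$ via the forward equation for $\theta$. This unweighted integral of $\varphi$ over \emph{all} of $Q$ is controlled by nothing at your disposal: it is not dominated by $I_3(s,\lambda;\varphi)$, because the weight $e^{-2s\alpha}(s\xi)^3$ vanishes exponentially at both $t=0$ and $t=T$, and it does not occur on your left-hand side either; the subsequent claim that everything is ``absorbed'' only addresses $\|\varphi(T/2)\|^2_{L^2(\Omega)}$ and a central-region term $\iint_{\Omega\times(T/4,3T/4)}|\varphi|^2$, which are not the terms your chain of inequalities actually produced. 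In other words, the forward--backward coupling between $\varphi$ and $\theta$ on the initial time interval is exactly the delicate point, and it cannot be disposed of by a global, constant-independent energy bound.

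The paper's way out is precisely the mechanism you dismiss in your last sentence. Since $\theta$ solves a forward problem with $\theta(0)=0$ and right-hand side $-\ell^{-2}\varphi\chi_{\mathcal O}+\gamma^{-2}\varphi$, its estimate on $(0,T/2)$ involves only $\varphi$ (effectively on $(0,T/2)$) and carries the small factor $C/\min\{\gamma^4,\ell^4\}$ (see \eqref{estimado_2_robust}); after adding $\|\theta\|^2_{L^2(0,T/2;L^2(\Omega))}$ to both sides of the $\varphi$-energy inequality \eqref{est_f} and using Poincar\'e, this contribution is absorbed into the left-hand side \emph{because} $\gamma$ and $\ell$ are large. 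That is why the hypothesis ``$\ell,\gamma$ large enough'' reappears in the statement of the proposition, contrary to your remark that it is ``not needed again here.'' A second, related bookkeeping point is that the $\theta$-term generated by the $\varphi$-estimate must be kept local in time (the paper uses a cutoff $\eta$ supported in $[0,3T/4]$, so only $\theta$ on $(0,3T/4)$ appears: the part on $(T/2,3T/4)$ is handled by \eqref{est_tmedios_robust}, the part on $(0,T/2)$ by the absorption above). If you localize your energy estimates in this way and invoke the largeness of $\gamma,\ell$ for the absorption, your argument becomes the paper's proof; as written, it does not close.
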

\begin{proof}
The proof is standard and relies on several well-known arguments \cite{}. First, by construction $\alpha=\beta$ and $\xi=\phi$ in $\Omega\times(T/2,T)$, hence
\begin{equation*}
\begin{split}
\int_{T/2}^T\!\int_{\Omega}e^{-2s\alpha}\xi^3|\varphi|^2+\int_{T/2}^T\!\int_{\Omega}e^{-2s\alpha}\xi^3|\theta|^2 \\
=\int_{T/2}^T\!\int_{\Omega}e^{-2s\beta}\phi^3|\varphi|^2+\int_{T/2}^T\!\int_{\Omega}e^{-2s\beta}\phi^3|\theta|^2
\end{split}
\end{equation*}
Therefore, from \eqref{car_sinlocales_robust} and the definition of $\beta$ and $\gamma$ we obtain
\begin{equation}\label{est_tmedios_robust}
\begin{split}
&\int_{T/2}^T\!\int_{\Omega}e^{-2s\beta}\phi^3|\varphi|^2+\int_{T/2}^T\!\int_{\Omega}e^{-2s\beta}\phi^3|\theta|^2  \leq C\iint_{\omega\times(0,T)}e^{-2s\beta}\phi^7|\varphi_1|^2
\end{split}
\end{equation}
On the other hand, for the domain $\Omega\times(0,T/2)$, we will use energy estimates for system \eqref{adj_robust}. In fact, let us introduce a function $\eta\in C^1([0,T])$ such that
\begin{equation*}
\eta=1\text{ in } [0,T/2], \quad \eta=0 \text{ in } [3T/4,T], \quad |\eta^\prime(t)|\leq C/T.
\end{equation*}
Using classical energy estimates for $\eta\varphi$ solution to the first equation of system \eqref{adj_robust} we obtain
\begin{equation*}
\begin{split}
&\|\varphi(0)\|^2_{L^2(\Omega)}+\|\varphi\|^2_{L^2(0,T/2;H_0^1(\Omega))} \leq C\left(\frac{1}{T^2}\|\varphi\|^2_{L^2(T/2,3T/4;L^2(\Omega))}+\|\eta\theta\|^2_{L^2(0,3T/4;L^2(\Omega))}\right)
\end{split}
\end{equation*}
From the definition of $\eta$, Poincar\'e inequality  and adding $\|\theta\|^2_{L^2(0,T/2;L^2(\Omega))}$ on both sides of the previous inequality we have
\begin{equation}\label{est_f}
\begin{split}
&\|\varphi(0)\|^2_{L^2(\Omega)}+\|\varphi\|^2_{L^2(0,T/2;L^2(\Omega))}+\|\theta\|^2_{L^2(0,T/2;L^2(\Omega))} \\
&\smallskip \leq C\left(\|\varphi\|^2_{L^2(T/2,3T/4;L^2(\Omega))}+\|\theta\|^2_{L^2(T/2,3T/4;L^2(\Omega))}+\|\theta\|^2_{L^2(0,T/2;L^2(\Omega))}\right)
\end{split}
\end{equation}
In order to eliminate the term $\|\theta\|^2_{L^2(0,T/2;L^2(\Omega))}$ in the right hand side, we use standard energy estimates for the second equation in \eqref{adj_robust}, thus
\begin{align} \nonumber
\iint_{\Omega\times(0,T/2)}|\theta|^2&\leq C\left(\frac{1}{\gamma^4}\iint_{Q}|\varphi|^2+\frac{1}{\ell^4}\iint_{\mathcal O\times(0,T)}|\varphi|^2\right) \\ \label{estimado_2_robust}
&\leq \frac{C}{\min\{\gamma^4,\ell^4\}}\iint_Q|\varphi|^2
\end{align}
Replacing \eqref{estimado_2_robust} in \eqref{est_f} and since $\gamma$ and $\ell$ are large enough we obtain
\begin{equation}\label{esti_1_robust}
\begin{split}
\|\varphi(0)\|^2_{L^2(\Omega)}+&\|\varphi\|^2_{L^2(0,T/2;L^2(\Omega))}+\|\theta\|^2_{L^2(0,T/2;L^2(\Omega))} \\
& \leq C\left(\|\varphi\|^2_{L^2(T/2,3T/4;L^2(\Omega))}+\|\theta\|^2_{L^2(T/2,3T/4;L^2(\Omega))}\right)
\end{split}
\end{equation}
Using \eqref{est_tmedios_robust} to estimate the terms in the right hand side of \eqref{esti_1_robust} and taking into account that the weight functions are bounded in $[0,3T/4]$ we have the estimate 
\begin{equation*}\label{est_0tmedios}
\begin{split}
\|\varphi(0)\|_{L^2(\Omega)}^2&+\int_{0}^{T/2}\!\!\!\!\int_{\Omega}e^{-2s\beta}\phi^3|\varphi|^2+\int_{0}^{T/2}\!\!\!\!\int_{\Omega}e^{-2s\beta}\phi^3|\theta|^2\\
&\leq C\left(\iint_{\omega\times(0,T)}e^{-2s\beta}\phi^7|\varphi_1|^2\right).
\end{split}
\end{equation*}
This estimate, together with \eqref{est_tmedios_robust}, and the definitions of $\phi^*$ and $\beta^*$ yield the desired inequality \eqref{car_mod_2_rob}.
\end{proof}

\begin{proof}[Proof of Proposition \ref{propo:obs_ineq}] 
The observability inequality \eqref{ineq_robust} follows immediately from Proposition \ref{car_modificado_robust}. Indeed, let us set $s=s_2$ as in \eqref{s_2} and define $\rho(t)=e^{s\beta^*}$. Thus $\rho(t)$ is a non-decreasing strictly positive function blowing up at $t=T$ that depends on $\Omega$, $\omega$, $\mathcal O_d$, $\|a\|_\infty$, $\|c\|_\infty$ and $T$, but can be chosen independently of $\mathcal O$, $\ell$ and $\gamma$. 

We obtain energy estimates with this new function for $\theta$ solution to the second equation of \eqref{adj_robust}. More precisely 
\begin{align*}
\iint_Q \rho^{-2}|\theta|^2dxdt &\leq C\left(\frac{1}{\gamma^4}\iint_{Q}\rho^{-2}|\varphi|^2dxdt+\frac{1}{\ell^4}\iint_{\mathcal O\times(0,T)}\rho^{-2}|\varphi|^2dxdt\right) \\
&\leq C\iint_Q \rho^{-2}|\varphi|^2dxt
\end{align*}
Since $e^{-2s\beta}\phi^7\leq C$ for all $(x,t)\in Q$ and noting that the right hand side of the previous inequality is comparable to the left hand side of inequality \eqref{car_mod_2_rob} up to a multiplicative constant, we obtain \eqref{ineq_robust}. This concludes the proof of Proposition \ref{propo:obs_ineq}.
\end{proof}

\section{Proof of Theorem \ref{teo_main}}\label{sec:leader_robust}
In this section, we will end the proof of Theorem \ref{teo_main}. We have already determined an optimal strategy for the follower control (see Proposition \ref{Prop:follower_robust}). It remains to obtain an strategy for the leader control $h$ such that $(y,q)$ solution to \eqref{foll_robust} verifies $y(T)=0$.

 The proof is inspired by well-known results on the controllability of nonlinear systems (see, for instance, \cite{zuazua_ondas,deteresa2000,zuazua_fer,b_gb_r_2,zuazua_fabre})  where controllability properties for linear problems and suitable fixed point arguments are the main ingredients.

\begin{proof}[Proof of Theorem \ref{teo_main}] We start by proving the existence of a leader control $h$ for a linearized version of \eqref{foll_robust}. In fact, for given $a,c\in L^\infty(Q)$, $y_0\in L^2(Q)$ and $y_d\in L^2(\mathcal O_d\times(0,T))$, we consider the linear system
\begin{equation}\label{foll_lin_robust}
\begin{cases}
y_t-\Delta y+ay=h\chi_\omega-\frac{1}{\ell^2}q\chi_{\mathcal O}+\frac{1}{\gamma^2}q &\quad\textnormal{in }Q, \\
-q_t-\Delta q+cq=(y-y_d)\chi_{\mathcal O_d} &\quad\textnormal{in }Q, \\
y=q=0 &\quad\textnormal{on }\Sigma, \\
y(x,0)=y_0(x), \quad q(x,T)=0 &\quad\textnormal{in }\Omega.
\end{cases}
\end{equation}
and the corresponding adjoint system \eqref{adj_robust}. Then, the following result holds
\begin{proposition}\label{prop:approx_control_robust}
Assume that $\omega\cap\mathcal O_d\neq \emptyset$. Let $C$ and $\rho$ as in Proposition \ref{propo:obs_ineq}. For any $\varepsilon>0$, any $y_0\in L^2(\Omega)$, and any $y_d\in L^2(\mathcal O_d\times(0,T))$ such that 
\begin{equation*}
\iint_{Q} \rho^{2}|y_d|^2 dxdt <+\infty
\end{equation*}
there exists a leader control $h_\varepsilon\in L^2(\omega\times(0,T))$ such that the associated solution $(y_\varepsilon,q_{\varepsilon})$ to \eqref{foll_lin_robust} satisfies 
\begin{equation}\label{approx_epsilon}
\|y_\varepsilon(T)\|_{L^2(\Omega)}\leq \varepsilon
\end{equation}
Moreover, the controls $\{h_\varepsilon\}_{\varepsilon>0}$ are uniformly bounded in $L^2(\omega\times(0,T))$, namely
\begin{equation}\label{est_uniforme_epsilon}
\|h_\varepsilon\|_{L^2(\omega\times(0,T))}\leq \sqrt{C}\left(\|y_0\|_{L^2(\Omega)}+\|\rho y_d\|_{L^2(Q)}\right), \quad \forall \varepsilon>0.
\end{equation}
\end{proposition}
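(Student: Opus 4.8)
The plan is to use a penalized (Fenchel--Rockafellar / Lions-type) duality argument, exploiting the observability inequality from Proposition \ref{propo:obs_ineq}. For fixed $\varepsilon>0$, introduce the functional on $L^2(\Omega)$
\begin{equation*}
J_\varepsilon(\varphi^T)=\frac12\iint_{\omega\times(0,T)}|\varphi|^2\,dxdt+\varepsilon\|\varphi^T\|_{L^2(\Omega)}-\iint_{\mathcal O_d\times(0,T)}y_d\,\theta\,dxdt-\int_\Omega y_0\,\varphi(0)\,dx,
\end{equation*}
where $(\varphi,\theta)$ solves the adjoint system \eqref{adj_robust} with terminal datum $\varphi^T$. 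First I would check that $J_\varepsilon$ is continuous, strictly convex, and coercive: the quadratic-plus-$\varepsilon\|\cdot\|$ part controls $\|\varphi^T\|$ from below via the observability inequality \eqref{ineq_robust}, while the linear terms are dominated because, by \eqref{ineq_robust}, $\|\varphi(0)\|_{L^2(\Omega)}\le C\|\varphi\|_{L^2(\omega\times(0,T))}$ and the term involving $y_d$ is handled by Cauchy--Schwarz after inserting the weight $\rho$: $\big|\iint y_d\theta\big|\le \|\rho y_d\|_{L^2(Q)}\|\rho^{-1}\theta\|_{L^2(Q)}\le C\|\rho y_d\|_{L^2(Q)}\|\varphi\|_{L^2(\omega\times(0,T))}$, again by \eqref{ineq_robust}. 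Hence $J_\varepsilon$ attains a unique minimum at some $\widehat\varphi^T_\varepsilon$.

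Next I would write the Euler--Lagrange equation for the minimizer and read off the control. Setting $h_\varepsilon=\widehat\varphi_\varepsilon\chi_\omega$, where $\widehat\varphi_\varepsilon$ is the first component of the adjoint solution associated with $\widehat\varphi^T_\varepsilon$, the optimality condition
\begin{equation*}
\iint_{\omega\times(0,T)}\widehat\varphi_\varepsilon\,\varphi\,dxdt+\varepsilon\,\frac{\langle\widehat\varphi^T_\varepsilon,\varphi^T\rangle}{\|\widehat\varphi^T_\varepsilon\|}-\iint_{\mathcal O_d\times(0,T)}y_d\,\theta\,dxdt-\int_\Omega y_0\,\varphi(0)\,dx=0
\end{equation*}
holds for all $\varphi^T$ (with the usual care when $\widehat\varphi^T_\varepsilon=0$, where the subdifferential of the norm is the unit ball). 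Pairing this with the duality identity obtained by multiplying \eqref{foll_lin_robust} by $(\varphi,\theta)$ and integrating by parts — which yields $\int_\Omega y_\varepsilon(T)\varphi^T = \iint_{\omega\times(0,T)}h_\varepsilon\varphi + \int_\Omega y_0\varphi(0) - \iint_{\mathcal O_d\times(0,T)}y_d\theta$ (the cross terms $-\tfrac1{\ell^2}q\chi_{\mathcal O}+\tfrac1{\gamma^2}q$ on the right of the $y$-equation cancel against the corresponding terms in the adjoint $\theta$-equation by construction) — gives $\langle y_\varepsilon(T),\varphi^T\rangle = -\varepsilon\langle \widehat\varphi^T_\varepsilon/\|\widehat\varphi^T_\varepsilon\|,\varphi^T\rangle$ for all $\varphi^T$, hence \eqref{approx_epsilon}.

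For the uniform bound \eqref{est_uniforme_epsilon}, I would use that $J_\varepsilon(\widehat\varphi^T_\varepsilon)\le J_\varepsilon(0)=0$, so
\begin{equation*}
\frac12\iint_{\omega\times(0,T)}|\widehat\varphi_\varepsilon|^2\le \iint_{\mathcal O_d\times(0,T)}y_d\,\widehat\theta_\varepsilon+\int_\Omega y_0\,\widehat\varphi_\varepsilon(0)\le C\big(\|\rho y_d\|_{L^2(Q)}+\|y_0\|_{L^2(\Omega)}\big)\|\widehat\varphi_\varepsilon\|_{L^2(\omega\times(0,T))},
\end{equation*}
using \eqref{ineq_robust} as above; dividing through yields $\|h_\varepsilon\|_{L^2(\omega\times(0,T))}=\|\widehat\varphi_\varepsilon\|_{L^2(\omega\times(0,T))}\le \sqrt{C}\big(\|y_0\|_{L^2(\Omega)}+\|\rho y_d\|_{L^2(Q)}\big)$, as claimed. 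The main obstacle I anticipate is the duality/integration-by-parts bookkeeping for the \emph{coupled} (cascade) system \eqref{foll_lin_robust}: one must verify that the adjoint system \eqref{adj_robust} is exactly the right one so that, upon integrating by parts, all the coupling terms ($q\chi_{\mathcal O}$, $q/\gamma^2$, $\theta\chi_{\mathcal O_d}$) pair up correctly and leave only the boundary/initial-terminal contributions; getting the signs and the placement of $\chi_{\mathcal O}$, $\chi_{\mathcal O_d}$ right is the delicate point, and it is precisely where the hypothesis $\omega\cap\mathcal O_d\neq\emptyset$ (which made the Carleman argument of Proposition \ref{car_inicial} work) is used. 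A secondary technical point is justifying the weighted estimate $\|\rho^{-1}\theta\|_{L^2(Q)}\le C\|\varphi\|_{L^2(\omega\times(0,T))}$, but this is exactly the content of \eqref{ineq_robust}.
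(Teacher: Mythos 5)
Your strategy is exactly the one the paper uses: minimize the penalized dual functional over the terminal data of the adjoint system \eqref{adj_robust}, obtain coercivity from the observability inequality \eqref{ineq_robust}, read the control off the Euler--Lagrange condition, and conclude with the duality identity; the uniform bound also comes from the same ingredients. However, as written one step fails because of a sign inconsistency. Your duality identity (correctly derived, with the coupling terms cancelling) is
\begin{equation*}
\int_\Omega y_\varepsilon(T)\varphi^T\,dx=\iint_{\omega\times(0,T)}h_\varepsilon\varphi\,dxdt+\int_\Omega y_0\varphi(0)\,dx-\iint_{\mathcal O_d\times(0,T)}y_d\,\theta\,dxdt,
\end{equation*}
so the linear part of the functional must be $+\int_\Omega y_0\varphi(0)\,dx-\iint_{\mathcal O_d\times(0,T)}y_d\,\theta\,dxdt$, which is the paper's choice \eqref{func_approx}; you instead took $-\int_\Omega y_0\varphi(0)\,dx$. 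With your functional, combining the optimality condition with the duality identity leaves an uncancelled term:
\begin{equation*}
\int_\Omega y_\varepsilon(T)\varphi^T\,dx=-\varepsilon\,\frac{(\widehat\varphi^T_\varepsilon,\varphi^T)_{L^2(\Omega)}}{\|\widehat\varphi^T_\varepsilon\|_{L^2(\Omega)}}+2\int_\Omega y_0\varphi(0)\,dx,
\end{equation*}
and no choice of sign for $h_\varepsilon$ removes it (flipping $h_\varepsilon$ instead doubles the $y_d$ term), so \eqref{approx_epsilon} does not follow. Flipping the sign of the $y_0$ term repairs the argument and makes it coincide with the paper's proof.

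Two minor points. First, the hypothesis $\omega\cap\mathcal O_d\neq\emptyset$ enters only through the observability inequality (as you yourself note), not through the integration-by-parts bookkeeping, which is routine here. Second, your route to \eqref{est_uniforme_epsilon} via $J_\varepsilon(\widehat\varphi^T_\varepsilon)\le J_\varepsilon(0)=0$ produces the bound with constant $2\sqrt C$ rather than $\sqrt C$; the paper instead takes $\varphi^T=\varphi^T_\varepsilon$ in the optimality condition \eqref{opt_cond_aprox}, which gives $\|h_\varepsilon\|^2_{L^2(\omega\times(0,T))}\le\sqrt C\left(\|y_0\|_{L^2(\Omega)}+\|\rho y_d\|_{L^2(Q)}\right)\|h_\varepsilon\|_{L^2(\omega\times(0,T))}$ and hence exactly \eqref{est_uniforme_epsilon}.
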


\begin{proof}
The proof is by now standard. For the sake of completeness, we sketch some of the steps. For any fixed $\varepsilon>0$, consider 
\begin{equation}\label{func_approx}
\mathcal F_{\varepsilon}(\varphi^T)=\frac{1}{2}\iint_{\omega\times(0,T)}|\varphi|^2dxdt+\varepsilon\|\varphi^T\|_{L^2(\Omega)}+\int_{\Omega}y_0\varphi(0)dx-\iint_{\mathcal O_d\times(0,T)} \theta y_d\,dxdt
\end{equation}
where $(\varphi,\theta)$ is the solution to \eqref{adj_robust} with initial data $\varphi^T\in L^2(\Omega)$. It can be  verified that \eqref{func_approx} is continuous and strictly convex. From H\"older and Young inequalities and using the observability inequality \eqref{ineq_robust} is not difficult to see that 
\begin{equation*}
\mathcal F_{\varepsilon}(\varphi^T)\geq \frac{1}{4}\iint_{\omega\times(0,T)}|\varphi|^2dxdt+\varepsilon\|\varphi^T\|_{L^2(\Omega)}-C\left(\|y_0\|^2_{L^2(\Omega)}+\|\rho y_d\|_{L^2(Q)}^2\right),
\end{equation*}
hence \eqref{func_approx} is also coercive. Consequently, $\mathcal F_\varepsilon$ reaches its minimum at a unique point $\varphi^T_\varepsilon\in L^2(\Omega)$. When $\varphi_\varepsilon^T\neq 0$, the optimality condition can be computed, that is
\begin{equation}\label{opt_cond_aprox}
\begin{split}
&\iint_{\omega\times(0,T)}\varphi_\varepsilon\varphi\, dxdt+\left(\frac{\varphi_{\varepsilon}^T}{\|\varphi_\varepsilon^T\|},\varphi^T\right)_{L^2(\Omega)}\\
&+\int_{\Omega}y_0\varphi(0)dx-\iint_{\mathcal O_d\times(0,T)}y_{d}\theta\, dxdt=0, \quad\forall \varphi^T\in L^2(\Omega),
\end{split}
\end{equation}
where $(\varphi_\varepsilon,\theta_\varepsilon)$ is the solution to \eqref{adj_robust} with initial condition $\varphi_\varepsilon^T$. Set $h_\varepsilon=\varphi_\varepsilon \chi_{\omega}$, then $(y_\varepsilon, q_\varepsilon)$ solution to \eqref{foll_lin_robust} associated to this control verifies \eqref{approx_epsilon}. To conclude, observe that setting $\varphi^T=\varphi_\varepsilon^T$ in \eqref{opt_cond_aprox} and using the observability inequality \eqref{ineq_robust} yields estimate \eqref{est_uniforme_epsilon}.
\end{proof}

Now, we will apply a fixed point argument to prove an approximate controllability result for the nonlinear system \eqref{foll_robust}. For a given globally Lipschitz  function $f\in C^2(\mathbb R)$ verifying $f(0)=0$, we can write 
\begin{equation*}
f(s)=g(s)s, \quad \forall s\in \mathbb R,
\end{equation*}
where $g:\mathbb R\to\mathbb R$ is a continuous function defined by
\begin{equation*}
g(s)=\int_{0}^{1}f'(\sigma s)\, d\sigma.
\end{equation*}
The continuity of $f$ and $f'$ and the density of $C^\infty_c(Q)$ in $L^2(Q)$ allow  to see that $g(z)$ and $f'(z)$ belong to $L^\infty (Q)$ for every $z\in L^2(Q)$. 

For each $z\in L^2(Q)$, let us consider the linear system \eqref{foll_lin_robust} with $a=a_z=g(z)$ and $c=c_z=f'(z)$. Thanks to the hypothesis on $f$, there exists $M$ such that 
\begin{equation}\label{ac_z}
\|a_z\|_\infty, \|c_z\|_\infty \leq M, \quad \forall z\in L^2(Q)
\end{equation}
In view of Proposition \eqref{prop:approx_control_robust}, for any given $\varepsilon>0$ there exists a leader control $h_{z}\in L^2(\omega\times(0,T))$ such that the solution $(y_z,q_z)$ to \eqref{foll_lin_robust} corresponding to $a_z,c_z$ satisfies 
\[
\|y_z(T)\|_{L^2(\Omega)}<\varepsilon.
\]
Moreover, we have the estimate (uniform with respect to $\varepsilon$ and $z$)
\begin{equation}\label{est_uni_control}
\|h_z\|_{L^2(\omega\times(0,T))}\leq \sqrt C(\|y_0\|_{L^2(\Omega}+\|\rho y_d\|_{L^2(Q)}), \quad \forall z\in L^2(Q)
\end{equation}
where $C$ only depends on $\Omega$, $\mathcal O_d$, $\mathcal O$, $M$ and $T$ and $\rho$ only depends on $\Omega$, $\mathcal O_d$, $M$ and $T$.

We consider the mapping $\Lambda:L^2(Q) \to L^2(Q)$ defined by $\Lambda z=y_z$ with $(y_z,q_z)$ the solution to \eqref{foll_lin_robust} associated to the potentials $a_z$, $c_z$, and the control $h_z$ provided by Proposition \ref{prop:approx_control_robust}. By means of the Schauder fixed point theorem, we will deduce that $\Lambda$ possesses at least one fixed point. It can be proved that if $\ell$ and $\gamma$ are large enough then \eqref{foll_lin_robust} has a unique solution $y_z\in W(0,T)$ veryfing 
\begin{equation}\label{est_W}
\|y_z\|_{W(0,T)}\leq C\left(1+\|h\|_{L^2(\omega\times(0,T))}\right)
\end{equation}
where $C$ only depends on $\Omega$, $\mathcal O$, $\mathcal O_d$, $\gamma$, $\ell$, $K$, $y_0$, $y_d$ and $T$.  In view of \eqref{ac_z}--\eqref{est_W}, we deduce that $\Lambda$ maps $L^2(Q)$ into a bounded set of $W(0,T)$. This space is compacty embbeded in $L^2(Q)$, therefore it exists a fixed compact set $K$ such that
\begin{equation*}
\Lambda(L^2(Q))\subset K.
\end{equation*}

It can be readily verified that $\Lambda$ is also a continuous map from $L^2(Q)$ into $L^2(Q)$. Therefore, we can use Schauder fixed point theorem to ensure that $\Lambda$ has at least one fixed point $y=y_\varepsilon$, where $(y_\varepsilon,q_\varepsilon)$ together with the control $h_\varepsilon=h_{y_\varepsilon}$ solve
\begin{equation}\label{foll_lin_robust_epsilon}
\begin{cases}
y_{\varepsilon,t}-\Delta y_{\varepsilon}+g(y_\varepsilon)y_{\varepsilon}=h_\varepsilon\chi_\omega-\frac{1}{\ell^2}q_\varepsilon\chi_{\mathcal O}+\frac{1}{\gamma^2}q_\varepsilon &\quad\textnormal{in }Q, \\
-q_{\varepsilon,t}+\Delta q_\varepsilon+f'(y_\varepsilon)q_\varepsilon=(y_{\varepsilon}-y_d)\chi_{\mathcal O_d} &\quad\textnormal{in }Q, \\
y_\varepsilon=q_\varepsilon=0 &\quad\textnormal{on }\Sigma, \\
y_\varepsilon(x,0)=y_0 (x), \quad q_\varepsilon(x,T)=0 &\quad\textnormal{in }\Omega.
\end{cases}
\end{equation}
verifying \eqref{approx_epsilon}. 

To conclude the proof of Theorem \ref{teo_main}, we will pass to the limit in \eqref{foll_lin_robust_epsilon} and \eqref{approx_epsilon}. Thanks to \eqref{est_uni_control}, the control $h_\varepsilon$ is uniformly bounded in $L^2(\omega\times(0,T))$. Since \eqref{ac_z} holds, the solution $(y_\varepsilon,q_\varepsilon)$ lies in a bounded set of $W(0,T)\times W(0,T)$ and therefore in a compact set of $L^2(Q)\times L^2(Q)$. Then, up to a subsequence, we have
\begin{align*}
&h_\varepsilon\rightharpoonup h \quad\text{weakly in}\quad L^2(\omega\times(0,T)), \\
&(y_\varepsilon,q_{\varepsilon})\rightarrow (y,q)\quad \text{in}\quad L^2(Q)\times L^2(Q), \\
&y_\varepsilon(T)\to y(T) \quad \text{in}\quad L^2(\Omega),
\end{align*}
for some $h\in L^2(\omega\times(0,T))$ and some $(y,q)\in W(0,T)\times W(0,T)$. Due to the continuity of $g$, we can pass to the limit in \eqref{foll_lin_robust_epsilon}, thus $(y,q)$ solves \eqref{foll_robust} with leader control $h$ and initial datum $y_0$. Moreover, passing to the limit in \eqref{approx_epsilon} we conclude that $y(\cdot,T)=0$. Therefore the proof is complete. 
\end{proof}

\section{Proof of Theorem \ref{teo_robust_acotado}} \label{sec:acotados}
In the previous sections, we proved the existence of a robust Stackelberg control for a nonlinear system when $(v,\psi)\in L^2(\mathcal O\times(0,T))\times L^2(Q)$. Here, we will follow the arguments to show that a similar result can be obtained when the follower control $v$ and the perturbation $\psi$ belong to the bounded sets \eqref{V_ad}--\eqref{Psi_ad}, respectively. 

As stated in the theorem, we consider the linear system 
\begin{equation}\label{lin_sys_rob}
\begin{cases}
y_t-\Delta y+ay=h\chi_{\omega}+v\chi_{\mathcal O}+\psi \quad\text{in }Q, \\
y=0\quad\text{on }\Sigma, \quad y(x,0)=y_0(x) \quad\text{in } \Omega.
\end{cases}
\end{equation}
where $a\in L^\infty(Q)$ and $y_0\in L^2(\Omega)$ is given. 

It is clear that for given $y_0\in L^2(\Omega)$, any $h\in L^2(\omega\times(0,T))$ and each $(v,\psi)\in \mathcal V_{ad}\times \Psi_{ad}$, system \eqref{lin_sys_rob} admits a unique solution $y\in  C([0,T];L^2(\Omega))\cap L^2(0,T;H_0^1(\Omega))$. 

As before, we begin by proving the existence of a saddle point $(\bar v,\bar \psi)$ for the cost functional \eqref{func_robust}. The following result will give us conditions to determine its existence:

\begin{proposition}[Prop. 2.1, p. 171, \cite{Ekeland}]\label{prop_bounded}
 Let $ J$ be a functional defined on $X\times Y$, where $X$ and $Y$ are convex, closed, non-empty, bounded sets. If
\begin{enumerate}
\item $\forall v\in X$, $\psi\mapsto J(v,\psi)$ is concave and upper semicontinous, 
\item $\forall \psi\in Y$, $v\mapsto J(v,\psi)$ is convex and lower semicontinous, 
\end{enumerate}
then $J$ possesses at least one saddle point $(\bar v, \bar \psi)$ and
\begin{equation}
\mathcal J(\bar v,\bar \psi)=\min_{v\in X}\sup_{\psi\in Y} J(v,\psi)=\max_{\psi\in Y}\inf_{v\in X} J(v,\psi).
\end{equation}
\end{proposition}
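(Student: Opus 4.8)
\medskip
\noindent\textbf{Proof proposal.} This is the weakly compact counterpart of Proposition \ref{prop_saddle_ekeland}, and the plan is to deduce it from Ky Fan's minimax inequality after transporting everything to the weak topology. Since the sets of interest live in Hilbert (hence reflexive) spaces, namely $L^2(\mathcal O\times(0,T))$ and $L^2(Q)$, the hypotheses that $X$ and $Y$ are convex, closed and bounded already force them to be weakly compact; accordingly I would set $Z=X\times Y$, which is convex and weakly compact. For $z=(v,\psi)$ and $z'=(v',\psi')$ in $Z$ I would introduce
\[
\Phi(z,z')=J(v,\psi')-J(v',\psi),
\]
and note that any $\bar z=(\bar v,\bar\psi)\in Z$ with $\sup_{z'\in Z}\Phi(\bar z,z')\le0$ is automatically a saddle point: taking $\psi'=\bar\psi$ gives $J(\bar v,\bar\psi)\le J(v',\bar\psi)$ for all $v'\in X$, and taking $v'=\bar v$ gives $J(\bar v,\psi')\le J(\bar v,\bar\psi)$ for all $\psi'\in Y$, which is exactly \eqref{sp_eq}. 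So it suffices to produce such a $\bar z$.

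\smallskip
\noindent\emph{Verifying Ky Fan's hypotheses on $Z$ endowed with the weak topology.} First, for fixed $z'$ the map $z\mapsto\Phi(z,z')$ is weakly lower semicontinuous: $v\mapsto J(v,\psi')$ is convex and norm-l.s.c.\ by hypothesis 2, hence weakly l.s.c.\ by Mazur's lemma, while $\psi\mapsto -J(v',\psi)$ is convex and l.s.c.\ since $J(v',\cdot)$ is concave and u.s.c.\ by hypothesis 1, hence also weakly l.s.c.; a sum of such terms, pulled back along the weakly continuous coordinate projections, is weakly l.s.c.\ on $Z$. Second, for fixed $z$ the map $z'\mapsto\Phi(z,z')$ is concave, being the sum of the concave maps $\psi'\mapsto J(v,\psi')$ and $v'\mapsto -J(v',\psi)$ of separate variables, which is jointly concave. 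Third, $\Phi(z,z)=J(v,\psi)-J(v,\psi)=0\le0$. Ky Fan's minimax inequality on the compact convex set $Z$ then yields $\bar z\in Z$ with $\sup_{z'\in Z}\Phi(\bar z,z')\le0$, and by the previous paragraph this $\bar z$ is the desired saddle point.

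\smallskip
\noindent\emph{The value identity.} Weak duality $\sup_{\psi}\inf_{v}J\le\inf_{v}\sup_{\psi}J$ is immediate, and the saddle point inequalities give $\inf_{v}\sup_{\psi}J\le\sup_{\psi}J(\bar v,\psi)=J(\bar v,\bar\psi)=\inf_{v}J(v,\bar\psi)\le\sup_{\psi}\inf_{v}J$, forcing equality throughout and identifying the common value as $J(\bar v,\bar\psi)$. The outer extrema are attained: for fixed $v$ the map $\psi\mapsto J(v,\psi)$ is weakly u.s.c.\ on the weakly compact $Y$, so each $\sup_{\psi}$ is a maximum; $v\mapsto\sup_{\psi}J(v,\psi)$ is weakly l.s.c.\ (a supremum of weakly l.s.c.\ maps) on the weakly compact $X$, so the outer $\inf_{v}$ is a minimum; the reversed order is handled symmetrically.

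\smallskip
\noindent\emph{Main obstacle.} The only genuine point is replacing ``closed, bounded, convex'' by ``compact'': this forces the weak topology on $Z$ and, with it, the upgrade of the assumed norm semicontinuity to weak semicontinuity, which is precisely where convexity (via Mazur) and reflexivity of the $L^2$ spaces enter; everything else is routine bookkeeping. An alternative route, closer to the presentation of \cite{Ekeland}, would bypass Ky Fan by approximating $X$ and $Y$ from inside by finite-dimensional convex polytopes $X_n\uparrow X$, $Y_n\uparrow Y$, invoking the finite-dimensional minimax theorem (or Brouwer's fixed point theorem) on each $X_n\times Y_n$, and passing to a weak limit using the same semicontinuity and compactness; I would favour the Ky Fan argument as the shorter one.
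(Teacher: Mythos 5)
Your proposal is correct, but be aware that the paper itself offers no proof of this statement: like Proposition \ref{prop_saddle_ekeland}, it is quoted directly from Ekeland--Temam (Prop.\ 2.1, p.\ 171, \cite{Ekeland}), so there is no in-paper argument to match step by step. Your route is a legitimate alternative derivation: passing to the weak topology, using reflexivity of the ambient $L^2$ spaces so that closed bounded convex sets are weakly compact, upgrading the assumed norm semicontinuity to weak semicontinuity through convexity of sublevel sets (Mazur), and applying Ky Fan's minimax inequality to $\Phi(z,z')=J(v,\psi')-J(v',\psi)$ on $Z\times Z$ with $\Phi(z,z)=0$; the check that $\sup_{z'\in Z}\Phi(\bar z,z')\le 0$ produces a saddle point and the chain of inequalities giving the common value are both sound (and the attainment of the outer extrema already follows from the saddle point itself, so your separate weak-compactness argument for attainment is redundant, though harmless). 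By contrast, the proof the paper delegates to \cite{Ekeland} is the more self-contained one: existence is deduced there from the minimax theorem of the same chapter, proved by finite-dimensional (polytope) approximation plus weak compactness and a limiting argument --- essentially the alternative you sketch at the end --- rather than by importing Ky Fan's inequality, which is a result of comparable depth to the statement being proved. Two points worth recording explicitly: the proposition as printed omits the standing hypothesis that $X$ and $Y$ lie in reflexive Banach spaces, which your argument (and the cited result) genuinely needs, since otherwise closed bounded convex sets need not be weakly compact; and in the displayed identity the left-hand side should be written with $J$ rather than $\mathcal J$, a typo inherited from the paper.
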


We will apply Proposition \ref{prop_saddle_ekeland} to \eqref{func_robust} with $X=\mathcal V_{ad}$ and $Y=\Psi_{ad}$. In fact, verifying the conditions 1--2 will be easier than in the nonlinear case. Recall that in the first part of the hierarchic control the leader control $h$ is fixed. First, we have the following:
\begin{lemma}\label{lemma_lineal}
Let $h\in L^2(\omega\times(0,T))$ and $y_0\in L^2(\Omega)$ be given. The mapping $(v,\psi)\mapsto y(v,\psi)$ from $\mathcal V_{ad}\times \Psi_{ad}$ into $L^2(0,T;H_0^1(\Omega))$ is affine, continuous, and has G\^{a}teau derivative $y^\prime(v^\prime,\psi^\prime)$ in every direction $(v^\prime,\psi^\prime)\in L^2(\mathcal O\times(0,T))\times L^2(Q)$. Moreover, the derivative $y^\prime(v^\prime,\psi^\prime)$ solves the linear system 
\begin{equation}\label{lin_deriv}
\begin{cases}
y^\prime_t-\Delta y^\prime+ay^\prime=v^\prime\chi_{\mathcal O}+\psi^\prime \quad\text{in }Q, \\
y^\prime=0\quad\text{on }\Sigma, \quad y^\prime(x,0)=0 \quad\text{in } \Omega.
\end{cases}
\end{equation}
\end{lemma}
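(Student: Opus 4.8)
The statement is the linear analogue of Lemma~\ref{lemma_frechet_1}, but much softer: since $f(y)=ay$ is linear, the solution map is affine rather than merely differentiable, so the bulk of the work in Lemma~\ref{lemma_frechet_1} disappears. The plan is to (i) decompose $y$ into an $(v,\psi)$-dependent part plus a fixed part to exhibit the affine structure, (ii) read off continuity from the standard linear parabolic energy estimate, and (iii) compute the G\^ateaux derivative directly from a difference quotient, which in the linear case is \emph{exact}, i.e.\ independent of the increment.

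First I would write $y(v,\psi)=y_1(v,\psi)+y_2$, where $y_2$ solves \eqref{lin_sys_rob} with $v=\psi=0$ (it carries the leader control $h\chi_\omega$ and the initial datum $y_0$, both fixed throughout this section) and $y_1$ solves the same equation with homogeneous initial datum and right-hand side $v\chi_{\mathcal O}+\psi$. By linearity of the equation in $(v,\psi)$, the map $(v,\psi)\mapsto y_1(v,\psi)$ is linear, hence $(v,\psi)\mapsto y(v,\psi)$ is affine. Continuity from $\mathcal V_{ad}\times\Psi_{ad}\subset L^2(\mathcal O\times(0,T))\times L^2(Q)$ into $L^2(0,T;H_0^1(\Omega))$ follows from the a~priori estimate already recalled in \eqref{est_cont_y} (valid here with $f(y)=ay$, $a\in L^\infty(Q)$), applied to the difference of two solutions: if $(v_1,\psi_1),(v_2,\psi_2)$ are two data, then $y(v_1,\psi_1)-y(v_2,\psi_2)=y_1(v_1-v_2,\psi_1-\psi_2)$ satisfies
\begin{equation*}
\|y_1(v_1-v_2,\psi_1-\psi_2)\|_{L^2(0,T;H_0^1(\Omega))}\le C\left(\|v_1-v_2\|_{L^2(\mathcal O\times(0,T))}+\|\psi_1-\psi_2\|_{L^2(Q)}\right),
\end{equation*}
with $C$ depending only on $\Omega$, $T$ and $\|a\|_\infty$; this is precisely Lipschitz continuity.

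For the derivative, fix $(v,\psi)$ and a direction $(v',\psi')\in L^2(\mathcal O\times(0,T))\times L^2(Q)$ and set, for $\tau\neq 0$, $y^\tau:=y(v+\tau v',\psi+\tau\psi')$ and $w^\tau:=(y^\tau-y)/\tau$. Subtracting the equations for $y^\tau$ and $y$ and dividing by $\tau$ — using that the term $a y$ is linear, so $\tfrac1\tau(ay^\tau-ay)=a w^\tau$ exactly — one sees that $w^\tau$ solves \eqref{lin_deriv} \emph{for every} $\tau$, independently of $\tau$; hence $w^\tau$ does not depend on $\tau$ at all, the limit $\tau\to 0$ is trivial, and the G\^ateaux derivative $y'(v',\psi')$ equals the unique weak solution of \eqref{lin_deriv}. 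One should note in passing that the relevant difference quotients only involve the \emph{linear} part $y_1$, so the fixed part $y_2$ (and with it $h$ and $y_0$) drops out of the derivative, which is why $y'(v',\psi')$ has zero initial datum and no forcing from $h$. Well-posedness of \eqref{lin_deriv} in $W(0,T)\subset L^2(0,T;H_0^1(\Omega))$ is again the standard linear parabolic theory with an $L^\infty$ zeroth-order coefficient.

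There is essentially no hard step here: the only mild point of care is to state the derivative in the correct space — $L^2(0,T;H_0^1(\Omega))$ rather than $W^{2,1}_2(Q)$, since we only assume $y_0\in L^2(\Omega)$ and $a\in L^\infty(Q)$, so no parabolic smoothing up to $H^2$ is available — and to make explicit that "$y'$ is a G\^ateaux derivative" is, in this affine setting, the same as "$y_1$ is the linear part of the affine map", so continuity of the derivative is automatic. The proof is therefore short: decompose, invoke \eqref{est_cont_y}, and take the (constant-in-$\tau$) difference quotient.
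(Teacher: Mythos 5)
Your proposal is correct and follows essentially the same route as the paper's own (very brief) proof: affineness and continuity come from the linearity of the system together with standard energy estimates, and the G\^ateaux derivative is obtained from the difference quotient $(y(v+\tau v',\psi+\tau\psi')-y(v,\psi))/\tau$, which in this linear setting solves \eqref{lin_deriv} exactly and is independent of $\tau$. Your added details (the $y_1+y_2$ decomposition and the explicit Lipschitz estimate) simply flesh out what the paper leaves implicit.
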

\begin{proof}
The fact that $(v,\psi)\mapsto y(v,\psi)$ is affine and continuous follows from the linearity of \eqref{lin_sys_rob} and well-known energy estimates for the heat equation. In the same way, thanks to linearity, the existence of the G\^ateau derivative and its characterization can be obtained by letting $\lambda$ tends to 0 in the expression $y^\lambda:=(y(v+\lambda v^\prime,\psi+\lambda\psi^\prime)-y(v,\psi))/\lambda$. 
\end{proof}

With this lemma, we are in position to check conditions 1--2 of Proposition \ref{prop_bounded}. This will give the existence of at most one saddle point of functional \eqref{func_robust}. 
\begin{proposition}\label{propo_saddle_lin}
Let $y_0\in L^2(\Omega)$ and $h\in L^2(\omega\times(0,T))$ be given. Then, for $\gamma$ sufficiently large, we have that  
\begin{enumerate}
\item $\forall\psi\in\Psi_{ad}$, $v\mapsto  J_r(v,\psi)$ is strictly convex lower semicontinuous, 
\item $\forall v\in \mathcal V_{ad}$, $\psi\mapsto  J_r(v,\psi)$ is strictly concave upper semicontinuous. 
\end{enumerate}
\end{proposition}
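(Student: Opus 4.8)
The plan is to follow the strategy of Proposition \ref{propo_saddle}, but the linear structure makes everything considerably simpler: by Lemma \ref{lemma_lineal} the map $(v,\psi)\mapsto y(v,\psi)$ is affine, so its second G\^ateaux derivative vanishes and the troublesome term $\iint_{\mathcal O_d\times(0,T)}(y-y_d)y''$ that appeared in \eqref{g_ys} is simply absent. Consequently the regularity analysis of Condition 1 in Proposition \ref{propo_saddle} (and with it the dimension restriction $N\le 6$) is not needed here; only a basic energy estimate is required.

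For the concavity claim, fix $v\in\mathcal V_{ad}$, pick $\psi\in\Psi_{ad}$ and a direction $\psi'$ with $\psi+\tau\psi'\in\Psi_{ad}$, and set $\mathcal G(\tau)=J_r(v,\psi+\tau\psi')$. By Lemma \ref{lemma_lineal}, $y(v,\psi+\tau\psi')=y(v,\psi)+\tau y'$, where $y'=y'(0,\psi')$ solves \eqref{lin_deriv} with $v'=0$. Substituting into \eqref{func_robust} and differentiating twice with respect to $\tau$,
\begin{equation*}
\mathcal G''(\tau)=\iint_{\mathcal O_d\times(0,T)}|y'|^2\,dxdt-\gamma^2\iint_Q|\psi'|^2\,dxdt.
\end{equation*}
Multiplying the equation for $y'$ by $y'$ in $L^2(\Omega)$, integrating by parts and using Gronwall's and Poincar\'e's inequalities, one obtains
\begin{equation*}
\iint_{\mathcal O_d\times(0,T)}|y'|^2\,dxdt\le\iint_Q|y'|^2\,dxdt\le C_1\iint_Q|\psi'|^2\,dxdt,
\end{equation*}
with $C_1>0$ depending only on $\Omega$, $\mathcal O_d$, $\|a\|_\infty$ and $T$. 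Hence $\mathcal G''(\tau)\le(C_1-\gamma^2)\|\psi'\|_{L^2(Q)}^2$, which is strictly negative whenever $\psi'\neq 0$, provided $\gamma^2>C_1$. This yields the strict concavity of $\psi\mapsto J_r(v,\psi)$ and fixes $\gamma_0=\sqrt{C_1}$.

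For the convexity claim, fix $\psi\in\Psi_{ad}$ and argue identically with $\mathcal G(\tau)=J_r(v+\tau v',\psi)$ and $y(v+\tau v',\psi)=y(v,\psi)+\tau y'$, where now $y'=y'(v',0)$ solves \eqref{lin_deriv} with $\psi'=0$; this gives
\begin{equation*}
\mathcal G''(\tau)=\iint_{\mathcal O_d\times(0,T)}|y'|^2\,dxdt+\ell^2\iint_{\mathcal O\times(0,T)}|v'|^2\,dxdt\ge\ell^2\|v'\|_{L^2(\mathcal O\times(0,T))}^2>0
\end{equation*}
for $v'\neq 0$, so $v\mapsto J_r(v,\psi)$ is strictly convex for every $\ell>0$, with no lower bound on $\ell$ needed. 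Finally, the affine continuity of $(v,\psi)\mapsto y(v,\psi)$ from Lemma \ref{lemma_lineal} together with the continuity of the quadratic terms in \eqref{func_robust} shows that $J_r$ is continuous on $\mathcal V_{ad}\times\Psi_{ad}$ for the strong $L^2$ topology; since $v\mapsto J_r(v,\psi)$ is convex, it is then weakly lower semicontinuous, and since $\psi\mapsto J_r(v,\psi)$ is concave, it is weakly upper semicontinuous, which is precisely what is required to apply Proposition \ref{prop_bounded} on the weakly compact sets $\mathcal V_{ad}$, $\Psi_{ad}$. The only point deserving attention is the uniformity of the constant $C_1$ in the energy estimate, so that $\gamma_0$ does not depend on $h$, $v$, $\psi$ or $y_0$; but this is immediate, because $y'$ solves a linear problem with zero initial datum whose sole coefficient is $a\in L^\infty(Q)$. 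This is the main (and only mild) technical point of the proof.
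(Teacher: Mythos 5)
Your proof is correct and follows essentially the same route as the paper: exploit the affine dependence of $y$ on $(v,\psi)$ from Lemma \ref{lemma_lineal} so the second-derivative term drops out, compute $\mathcal G''(\tau)=\iint_{\mathcal O_d\times(0,T)}|y'|^2-\gamma^2\iint_Q|\psi'|^2$ for the concavity and use a standard energy estimate (constant depending only on $\Omega$, $\mathcal O_d$, $\|a\|_\infty$, $T$) to conclude for $\gamma$ large, while strict convexity in $v$ is immediate from the $\ell^2$-term. Your explicit convexity computation and the remark on weak semicontinuity are harmless elaborations of what the paper states as ``readily verified.''
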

\begin{proof}
\textit{Condition 1}. Thanks to Lemma \ref{lemma_lineal}, the map $v\mapsto  J_r(v,\psi)$ is lower semicontinuous. Since  $v\mapsto y(v,\psi)$ is linear, the strict convexity of $ J_r$ can be readily verified. 

\textit{Condition 2}. Also, by Lemma \ref{lemma_lineal}, the map $\psi\mapsto  J_r(v,\psi)$ is upper semicontinuous. To prove the concavity, we will argue as in the nonlinear case. To this end, consider
\begin{equation*}
\mathcal G(\tau)= J_r(v,\psi+\tau\psi^\prime).
\end{equation*}
Then, it is sufficient to prove that $\mathcal G(\tau)$ is concave with respect to $\tau$. We compute
\begin{equation*}
\mathcal G^\prime(\tau)=\iint_{\mathcal O_d\times(0,T)} (y+\tau y^\prime-y_d)y^\prime-\gamma^{2}\iint_{Q}(\psi+\tau\psi^\prime)\psi^\prime
\end{equation*}
where $y^\prime$ is solution to \eqref{lin_deriv} with $v^\prime=0$. It is clear that $y^\prime$ is independent of $\tau$, hence
\begin{equation*}
\mathcal G^{\prime\prime}(\tau)=\iint_{\mathcal O_d\times(0,T)}|y^\prime|^2-\gamma^2\iint_Q|\psi^\prime|^2
\end{equation*}
From classical energy estimates for the heat equation, we obtain
\begin{equation*}
\mathcal G^{\prime\prime}(\tau)\leq -(\gamma^2-C)\|\psi^\prime\|_{L^2(Q)}^2, \quad \forall \psi^\prime\in L^2(Q)
\end{equation*}
where $C$ is a positive constant only depending $\Omega$, $\mathcal O_d$, $\|a\|_\infty$ and $T$. Then, for a sufficiently large value $\gamma$, we have $\mathcal G^{\prime\prime}(\tau)<0$, $\forall \tau\in \mathbb R$. Thus, the function $\mathcal G$ is strictly concave, and the strict concavity of $\psi\mapsto J_r(v,\psi)$ follows immediately. This concludes the proof. 
\end{proof}
Combining the statements of Propositions \ref{prop_bounded} and \ref{propo_saddle_lin}, we are able to deduce the existence of at most one saddle point $(\bar v,\bar \psi)\in \mathcal V_{ad}\times \Psi_{ad}$. Unlike the nonlinear case, the solution $(\bar v,\bar \psi)$ to the robust control problem may not necessarily satisfy \eqref{nes_saddle}, unless it is located in the interior of the domain $\mathcal V_{ad}\times \Psi_{ad}$. 

To characterize in this case the solution to the control problem, we use the fact that if $(\bar v,\bar \psi)$ is a saddle point of $J$, then
\begin{equation*}
J_r(\bar v,\bar \psi) \leq  J_r((1-\lambda)\bar v+\lambda v,\bar \psi), \quad \forall v\in \mathcal V_{ad}, 
\end{equation*}
or equivalently 
\begin{equation*}
0\leq  J_r(\bar v+\lambda(v-\bar v))- J_r(\bar v,\bar \psi), \quad \forall v\in \mathcal V_{ad}.
\end{equation*}
Dividing by $\lambda$ and taking the limit as $\lambda\to 0$, we obtain from the above expression
\begin{equation}\label{vin_control}
0\leq \iint_{\mathcal O_d\times(0,T)}(y-y_d)\hat y+{\ell^2}\iint_{\mathcal O\times(0,T)}\bar v(v-\bar v),
\end{equation}
where $y$ is the solution to \eqref{lin_sys_rob} evaluated in $(\bar v,\bar \psi)$ and $\hat y$ stands for the directional derivative \eqref{lin_deriv} in the direction $(v-\bar v,0)$. We introduce the adjoint state $q$ solution to the linear system 
\begin{equation}\label{adjoint_robust_lin}
\begin{cases}
-q_t-\Delta q+aq=(y-y_d)\chi_{\mathcal O_d} \quad\text{in }Q, \\
q=0 \quad\text{on }\Sigma, \quad q(x,T)=0 \quad\text{in } \Omega.
\end{cases}
\end{equation}
Multiplying \eqref{adjoint_robust_lin} by $\hat y$ and integrating by parts in $L^2(Q)$, it is not difficult to see that we can rewrite \eqref{vin_control} as 
\begin{equation*}
0\leq \iint_{\mathcal O\times(0,T)}(q+\ell^2 \bar v)(v-\bar v), \quad \forall v\in \mathcal V_{ad}.
\end{equation*}
Also, from the properties of the saddle point $(\bar v,\bar \psi)$, we have
\begin{equation*}
 J_r(\bar v,(1-\lambda)\bar \psi+\lambda \psi) \leq \mathcal J (\bar v,\bar \psi), \quad \forall \psi\in \Psi_{ad}.
\end{equation*}
Arguing as above, we deduce that
\begin{equation}\label{vin_pert}
\iint_{\mathcal O_d\times(0,T)}(y-y_d)\tilde y-{\gamma^2}\iint_{\mathcal O\times(0,T)}\bar \psi(\psi-\bar \psi)\leq 0,
\end{equation}
where $y$ is the solution to \eqref{lin_sys_rob} evaluated in $(\bar v,\bar \psi)$ and $\tilde y$ denotes the directional derivative \eqref{lin_deriv} in the direction $(0,\psi-\bar \psi)$. If we multiply \eqref{adjoint_robust_lin} by $\tilde y$ and integrate by parts in $L^2(Q)$, we can rewrite \eqref{vin_pert} as
\begin{equation*}
\iint_{Q}(q-\gamma^2\bar\psi)(\psi-\bar \psi)\leq 0, \quad \forall \psi\in \Psi_{ad}.
\end{equation*}
In this way, we have that $(\bar v,\bar \psi)$ satisfies the robust control problem \eqref{sp_eq} if $(y,p,\bar v,\bar \psi)$ satisfies the following optimality system:
\begin{align}\label{foll_robust_bound}
%&\left.\begin{array}{cc}
%\begin{cases}
%y_t-\Delta y+ay=h\chi_\omega+\bar v\chi_{\mathcal O}+\bar \psi \\
%y=0 \\
%y(0)=y_0
%\end{cases} & 
%\begin{cases}
%-q_t-\Delta q+aq=(y-y_d)\chi_{\mathcal O_d}  \\
%y=0 \\
%y(0)=y_0
%\end{cases}
%\end{array}\right. \\
&\begin{cases}
y_t-\Delta y+ay=h\chi_\omega+\bar v\chi_{\mathcal O}+\bar \psi \quad\textnormal{in }Q, \\
-q_t-\Delta q+aq=(y-y_d)\chi_{\mathcal O_d} \quad\textnormal{in }Q, \\
y=q=0 \quad\textnormal{on }\Sigma, \quad y(x,0)=y_0(x), \ q(x,T)=0 \quad\textnormal{in }\Omega, \\
\end{cases} 
\\
&\quad \bar v\in \mathcal V_{ad}, \quad \bar \psi\in \Psi_{ad}, \\ \label{rob_opt_control}
&\ \iint_{\mathcal O\times(0,T)}(q+\ell^2 \bar v)(v-\bar v)\geq 0, \quad \forall v\in \mathcal V_{ad}, \\ \label{sys_opt_fin}
&\ \iint_{Q}(q-\gamma^2\bar\psi)(\psi-\bar \psi)\leq 0, \quad \forall \psi\in \Psi_{ad}.
\end{align}
From the hierarchic control methodology, the next step is obtain a leader control $h$ such that $y$ solution to the coupled system \eqref{foll_robust_bound} satisfies $y(T)=0$. The idea is to apply the results from Section \ref{sec:leader_robust}. We follow the spirit of \cite{araruna}.

First, note that $\bar \psi$ satisfying the variational inequality \eqref{sys_opt_fin} can be written as the projection onto the convex set $\Psi_{ad}$, that is,
\begin{equation*}
\bar \psi=\Pi_{\Psi_{ad}}\left(\tfrac{1}{\gamma^2}q\right).
\end{equation*}
The same is true for \eqref{rob_opt_control}. In this case, we have
\begin{equation*}
\bar v=\Pi_{\mathcal V_{ad}}\left(-\tfrac{1}{\ell^2}q|_{\mathcal O}\right).
\end{equation*}
In view of this, the optimality system \eqref{foll_robust_bound}--\eqref{sys_opt_fin} now reads
\begin{align}\label{foll_bound_rew}
&\begin{cases}
y_t-\Delta y+ay=h\chi_\omega+\Pi_{\mathcal V_{ad}}(-\tfrac{1}{\ell^2}q|_{\mathcal O})\chi_{\mathcal O}+\Pi_{\Psi_{ad}}(\tfrac{1}{\gamma^2}q) \quad\textnormal{in }Q, \\
-q_t-\Delta q+aq=(y-y_d)\chi_{\mathcal O_d} \quad\textnormal{in }Q, \\
y=q=0 \quad\textnormal{on }\Sigma, \\
y(x,0)=y_0(x), \ q(x,T)=0 \quad\textnormal{in }\Omega, \\
\end{cases} 
\end{align}
As in the semilinear case, we will analyze the null controllability of \eqref{foll_bound_rew} by means of a fixed point method. To do this, note that for every $z \in L^2(Q)$, $\Pi_{\Psi_{ad}}$ can be expressed in the form $\Pi_{\Psi_{ad}}(z)=\rho(z)z$ where the function $\rho(z)$ is defined as
\begin{equation*}
\rho(z)=
\begin{cases}
{1}, &\text{if } z(x,t)\in \Psi_{ad} \\
\Pi_{\Psi_{ad}}(z)/{z}, &\textnormal{otherwise}.
\end{cases}
\end{equation*}
for a.e. $(x,t)\in Q$.  

Defined in this way, $z\mapsto \rho(z)$ is continuous on $L^2(Q)$ and $\|\rho(z)\|_\infty\leq 1$, $\forall z\in L^2(Q)$. Analogously, we can define a function $\sigma$ such that $\Pi_{\mathcal V_{ad}}$ can be expressed in the form $\Pi_{\mathcal V_{ad}}=\sigma(z)z$ for every $z\in L^2(\mathcal O\times(0,T))$.

Therefore, the controllability problem is now to find $h\in L^2(\omega\times(0,T))$ such that the solution to 
\begin{align}\label{foll_bound_ls}
&\begin{cases}
y_t-\Delta y+ay=h\chi_\omega-\tilde\sigma(q)\tfrac{1}{\ell^2}q\chi_{\mathcal O}+\tilde\rho(q)\tfrac{1}{\gamma^2}q \quad\textnormal{in }Q, \\
-q_t-\Delta q+aq=(y-y_d)\chi_{\mathcal O_d} \quad\textnormal{in }Q, \\
y=q=0 \quad\textnormal{on }\Sigma, \\
y(x,0)=y_0(x), \ q(x,T)=0 \quad\textnormal{in }\Omega, \\
\end{cases} 
\end{align}
verifies $y(T)$=0. In system \eqref{foll_bound_ls}, $\tilde \sigma(q)$ stands for $\tilde \sigma(q)=\sigma(\tfrac{1}{\gamma^2}q|_\mathcal O)$ while $\tilde \rho(q)$ denotes $\tilde \rho(q)=\rho(\frac{1}{\gamma^2}q)$. We will establish the null controllability for \eqref{foll_bound_ls} arguing as in section \ref{sec:leader_robust}. 
 
For each  $\tilde q\in L^2(Q)$, let us consider the linear system
\begin{align}\label{y_qtlde}
&\begin{cases}
y_t-\Delta y+ay=h\chi_\omega-\tilde\sigma(\tilde q)\tfrac{1}{\ell^2}q\chi_{\mathcal O}+\tilde\rho(\tilde q)\tfrac{1}{\gamma^2}q \quad\textnormal{in }Q, \\
-q_t-\Delta q+aq=(y-y_d)\chi_{\mathcal O_d} \quad\textnormal{in }Q, \\
y=q=0 \quad\textnormal{on }\Sigma, \\
y(x,0)=y_0(x), \ q(x,T)=0 \quad\textnormal{in }\Omega. \\
\end{cases} 
\end{align}
In this case, adapting the arguments in Section \ref{sec:null_robust}, is not difficult to obtain an observability inequality (see Eq. \ref{ineq_robust}) for the solutions to the adjoint system 
\begin{equation*}\label{}
\begin{cases}
-\varphi_t-\Delta \varphi+a\varphi=\theta\chi_{\mathcal O_d} &\quad\textnormal{in }Q, \\
\theta_t-\Delta\theta+c\theta=-\frac{1}{\ell^2}\tilde \sigma(\tilde q)\varphi\chi_{\mathcal O}+\frac{1}{\gamma^2}\tilde \rho(\tilde q)\varphi &\quad\textnormal{in }Q, \\
y=q=0 &\quad\textnormal{on }\Sigma, \\
\varphi(x,T)=\varphi^T(x), \quad \theta(x,0)=0 &\quad\textnormal{in }\Omega.
\end{cases}
\end{equation*}

With this new observability estimate and following Section \ref{sec:leader_robust}, we can build a control $\tilde h$ associated to each $\tilde q\in L^2(Q)$ such that 
\begin{equation}\label{y_eps_tilde}
\|\tilde y(T)\|_{L^2(\Omega)}< \eps,
\end{equation}
where we have denoted by $\tilde y$ the first component of $(\tilde y,\tilde q)$ solution to \eqref{y_qtlde} with this control. Moreover, the control $\tilde h$ satisfies 
\begin{equation}\label{est_tilde_h}
\|\tilde h\|_{L^2(\omega\times(0,T))}\leq C,
\end{equation}
for some $C>0$ that can be chosen independently of $\gamma$ and $\ell$. 

Thanks to \eqref{est_tilde_h}, the controlled solution $(\tilde y,\tilde q)$ is uniformly bounded in $W(0,T)\times W(0,T)$. Therefore, we can deduce that the mapping $\tilde q\mapsto q$ has at least one fixed point. The rest of the proof follows as in Section \ref{sec:leader_robust}.
\bibliographystyle{amsalpha}

\end{document}